\documentclass[11pt]{article}
\usepackage{amsmath,amssymb,amsthm,amsfonts,mathtools}
\usepackage{geometry}
\usepackage{xcolor}
\usepackage[hidelinks]{hyperref}
\usepackage{cleveref}
\newtheorem{proposition}{Proposition}

\newtheorem{theorem}{Theorem}
\newtheorem*{theorem*}{Theorem}

\newtheorem{lemma}{Lemma}

\newtheorem{corollary}{Corollary}
\theoremstyle{definition}

\newtheorem{remark}{Remark}

\newcommand{\tr}{\operatorname{Tr}}

\newcommand{\Id}{\mathrm{Id}}

\newcommand{\im}{\mathrm{Im}}

\newcommand{\ic}{\mathrm{i}}

\newcommand{\arctanh}{\mathrm{arctanh}}

\definecolor{darkpurple}{rgb}{0.4, 0, 0.23}

\begin{document}

\title{Geometric functionals of Brownian motion on Hermitian symmetric spaces of non-compact type}

\author{Fabrice Baudoin\footnote{Research partially supported by grant 10.46540/4283-00175B from Independent Research Fund Denmark and by the Villum Investigator grant \emph{Stochastic Analysis in Aarhus}. The two authors  acknowledge funding from the European Research Council (ERC) under the European Union’s Horizon Europe research and innovation programme (RanGe project, Grant Agreement No. 101199772).}, Alexandre Reber}
\date{}
\maketitle
\begin{abstract}
We study Brownian motion on  Hermitian symmetric spaces of non-compact type in
their bounded-domain realization. Using Jordan triple systems, we identify the spectral
values after an appropriate change of variables  as a Heckman--Opdam diffusion of type  \(BC_r\). We then analyze two Brownian functionals: the symplectic area
associated with the canonical K\"ahler form, and, in the tube-type case, the winding defined by the Jordan determinant. For the area process we prove a martingale representation, a central limit theorem, and an exact conditional characteristic function expressed as a ratio of
Heckman--Opdam heat kernels. For the determinant winding process we obtain analogous
heat-kernel formulas and prove convergence to a Cauchy law with scale determined by the initial
determinant.  These results extend classical formulas of Paul L\'{e}vy
and Marc Yor from the Euclidean setting to the full class of Hermitian
symmetric spaces of non-compact type.

\end{abstract}

\tableofcontents

\section{Introduction}

The study of random functionals that encode geometric and topological information
about the underlying space is a central theme in modern probability theory, with deep connections to
harmonic analysis, representation theory, mathematical physics and complex geometry. The present paper is devoted to
two such functionals defined for Brownian motion on Hermitian symmetric spaces of non-compact type: the
\emph{stochastic area process}, which measures the signed symplectic area swept by the Brownian path
relative to the K\"{a}hler structure, and the \emph{Jordan determinant winding process}, which
counts the windings of the Brownian path around a canonical complex hypersurface. Our main
results are explicit characteristic function formulas for both processes, expressed in terms of
heat kernels of Heckman--Opdam hypergeometric Laplacians, together with precise asymptotic
descriptions.

\subsection*{Background and motivation}

The first geometric functional of Brownian motion to have been studied in the literature is the stochastic area in the plane. For a Brownian motion
$(Z(t))_{t \geq 0} = (X(t), Y(t))_{t \geq 0}$ in $\mathbb{R}^2$, the
\emph{L\'{e}vy stochastic area} is the process
\[
  S(t) = \frac{1}{2} \int_0^t  X(s)\, dY(s) - Y(s)\, dX(s) ,
\]
which represents the algebraic area enclosed between the Brownian path and its chord. In his seminal 1950 paper \cite{Levy1950}, Paul L\'{e}vy computed the characteristic function
\[
  \mathbb{E}\bigl[e^{i\alpha S(t)} \mid Z(t)=z\bigr] = \frac{\alpha t / 2}{\sinh(\alpha t/2)} \exp \left( -\frac{|z|^2}{2t} ( \alpha t \coth (\alpha t) -1)\right)
\]
using a Fourier series expansion of $Z(t)$. In 1980, Marc Yor provided in \cite{Yor1980} a second, insightful proof of this formula. His argument combines, in an elegant way, Girsanov's theorem with explicit properties of the harmonic oscillator semigroup and its connection to Ornstein--Uhlenbeck processes. Extensions to Brownian motion on $\mathbb{C}^n$ are straightforward: for a standard complex Brownian motion on $\mathbb{C}^n$, the area formula factorises as a product of $n$ independent copies of the one-dimensional formula, reflecting the flat geometry of $\mathbb{C}^n$. Somewhat surprisingly, the stochastic area process and L\'{e}vy's formula appear in a wide range of contexts, including index theory \cite{Bismut1984}, rough paths theory \cite{FrizVictoir}, analytic number theory \cite{BianePitmanYor01}, and mathematical physics \cite{Duplantier}.

Winding numbers provide a second  source of motivation. Spitzer's celebrated 1958 theorem \cite{Spitzer1958} asserts that, for planar Brownian motion, $2\theta(t)/\log t$
converges in distribution to a standard Cauchy random variable, where
\[
\theta(t)=\int_0^t \frac{X(s)\, dY(s) - Y(s)\, dX(s)}{X(s)^2+Y(s)^2}
\]
is the total winding angle around the origin. Using a similar approach to that of \cite{Yor1980}, Marc Yor computed in his 1980 paper \cite{Yor1980winding} the exact characteristic function of $\theta(t)$ , which reads
\[
\mathbb{E}\bigl[e^{i\alpha \theta(t)} \mid |Z(t)|=r \bigr]=\frac{\mathrm{I}_{|\alpha|}(\rho r/t)}{\mathrm{I}_0(\rho r/t)},
\]
where $r>0$, $\rho=|Z(0)|$, and $\mathrm{I}_{|\alpha|}$ denotes the modified Bessel function of the first kind.

\vspace{0.4cm}

Various extensions of Spitzer's asymptotic theorem, as well as of L\'{e}vy's area and Yor's winding formulas, can be found in the literature; see \cite{BertoinWerner1994,Franchi}, the monograph \cite{BaudoinBook2024}, and \cite{baudoin2024}. However, despite extensive work, a general theory is still lacking, and  available extensions have been developed within specific families of examples, notably those connected to random matrix theory. The present paper substantially extends this program to the full class of \emph{Hermitian symmetric spaces of non-compact type}, which simultaneously generalise the complex unit ball, the Siegel domain, matrix unit balls, and the two exceptional Hermitian symmetric spaces of non-compact type. The main novelty of the paper is threefold. First, we give a direct Jordan-theoretic derivation of the dynamics of the spectral values of the Brownian motion on bounded symmetric domains, avoiding the use of the global \(G/K\) harmonic analysis machinery. Second, we introduce a stochastic symplectic area process and show that  it admits an exact conditional characteristic function governed by a simple shift of Heckman--Opdam heat kernel multiplicities. Third, in the tube-type case, we identify a determinant winding functional that also admits an  exact conditional characteristic function but whose asymptotic distribution is Cauchy rather than Gaussian.

\subsection*{The setting}

Let $X = G/K$ be an irreducible Hermitian symmetric space of non--compact type and rank $r$. Via the Harish--Chandra embedding, $X$ can be realised as a \emph{bounded symmetric domain} \cite{Helgason1978,KoranyiWolf1965}, that is, a bounded open subset $\Omega$ of a complex vector space $V$ of complex dimension $n$, endowed with the structure of a Hermitian positive Jordan triple system $(V,\{\cdot,\cdot,\cdot\})$. The domain $\Omega$ carries a canonical $G$-invariant K\"{a}hler metric, the Bergman metric, whose associated K\"{a}hler form is
\[
\omega = i\partial\bar{\partial} \log K(z,z),
\]
where $K(z,w) = \mathrm{det} B(z,w)^{-1}$ is the Bergman kernel expressed in terms of the Bergman operator $B(z,w)$. The structural parameters of $\Omega$ are its rank $r$, its characteristic multiplicities $a,b \geq 0$, and its genus $\gamma = (r-1)a + b + 2$; these are related to the complex dimension by
\begin{equation*}
  n = r + \tfrac{r(r-1)}{2}a + rb,
\end{equation*}
see \eqref{dimension}.

The Brownian motion $(X(t))_{t \geq 0}$ on $\Omega$, with respect to the
Bergman metric, is the unique strong solution of the It\^{o} stochastic
differential equation
\[
  dX(t) = B(X(t), X(t))^{1/2} \, d\beta(t), \quad X(0) = z \in \Omega,
\]
see Theorem~\ref{Theorem EDS}, where $(\beta(t))_{t \geq 0}$ is a standard
complex Brownian motion on $(V,(\cdot|\cdot))$. A central result of this
paper, proved in Theorem~\ref{spectral diffusion}, is that if
\[
  \lambda_1(t) \geq \cdots \geq \lambda_r(t) \geq 0
\]
denote the spectral values of $X(t)$, that is, the singular values in the
Jordan--algebraic sense, then the transformed variables
\[
  \tau_i(t)=\operatorname{arctanh}(\lambda_i(t))
\]
form a diffusion with generator $\frac{1}{2}\mathcal{L}_{a,1,2b}$, the
radial Heckman--Opdam operator of type $BC_r$ with multiplicity parameters $(k_1,k_2,k_3)=(a,1,2b)$
see \eqref{radial generator}. From the Lie-theoretic point of view, the identification of the radial part
of Brownian motion on a Riemannian symmetric space with a
Heckman--Opdam-type diffusion is classical; see, for instance,
\cite{MalliavinMalliavin}, \cite{Schapira_sharp_estimates}, and
\cite{Taylor_1991}. However, this result does not by itself give
a direct description of the process obtained by taking the spectral values
of $X(t)$ in a Jordan frame in the bounded-domain realization. In particular,
the passage from the Lie-theoretic radial component to the Jordan spectral values
requires an explicit identification through the Harish--Chandra isomorphism. Theorem~\ref{spectral diffusion} provides instead  an intrinsic derivation in the language of Hermitian Jordan triple systems. Its proof uses only the
Peirce decomposition, the Bergman operator, and the polar integration
formula, making it particularly well adapted to the subsequent analysis of
the stochastic area and determinant winding functionals.

\subsection*{Main results}

\paragraph{The stochastic area process.}
Let $\Phi(z) = \log \mathrm{det} B(z,z)^{-1}$ be the canonical $K$-invariant K\"{a}hler potential on
$\Omega$, and let $\alpha = d^c\Phi = \frac{1}{2i}(\partial - \bar{\partial})\Phi$ be its
associated $d^c$-potential form. The \emph{stochastic symplectic area process} is the Stratonovich line
integral
\[
  S(t) = \int_{X[0,t]} \alpha,
\]
which is well-defined as a local martingale by the general theory of stochastic line integrals of
$d^c$-forms on K\"{a}hler manifolds developed in Section~2 (see Theorem~\ref{martingale
property}). Our first main result, Corollary~\ref{martingale representation for S}, is an
explicit decomposition: there exists an $\mathbb{R}^r$-valued Brownian motion
$(\alpha_i(t))_{1 \leq i \leq r}$ such that
\[
  S(t) = \sqrt{\gamma} \sum_{i=1}^r \int_0^t \lambda_i(s)\, d\alpha_i(s).
\]
It  yields a central limit theorem proved in Theorem~\ref{CLT area}:

\begin{theorem*}
Let $(X(t))_{t \ge 0}$ be a Brownian motion on $\Omega$ and let $(S(t))_{t \ge 0}$ be the associated stochastic area process.
Then, as $t \to +\infty$,
\[
  \frac{1}{\sqrt{t}}\, S(t)
  \;\Longrightarrow\;
  \mathcal{N}(0,\gamma r),
\]
where $\mathcal{N}(0,\gamma r)$ denotes the centered Gaussian distribution with variance
$\gamma r$.
\end{theorem*}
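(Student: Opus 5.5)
The plan is to start from the martingale representation of Corollary~\ref{martingale representation for S},
\[
S(t)=\sqrt{\gamma}\sum_{i=1}^r\int_0^t \lambda_i(s)\,d\alpha_i(s),
\]
with $(\alpha_i)$ an $\mathbb{R}^r$-valued standard Brownian motion, and to apply a martingale central limit theorem. Since $\Omega$ is bounded, the spectral values satisfy $0\le\lambda_i<1$, so every integrand is bounded. Hence $S$ is a true continuous $L^2$ martingale, with quadratic variation
\[
\langle S\rangle_t=\gamma\sum_{i=1}^r\int_0^t\lambda_i(s)^2\,ds .
\]
Set $S^{(t)}(u)=t^{-1/2}S(tu)$ for $u\in[0,1]$. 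By the functional CLT for continuous local martingales (or more simply the Dambis--Dubins--Schwarz representation $S(t)=W(\langle S\rangle_t)$ together with Brownian scaling), it suffices to prove
\[
\frac{\langle S\rangle_t}{t}\longrightarrow \gamma r\quad\text{in probability as } t\to\infty .
\]
This then gives $t^{-1/2}S(t)\Rightarrow\mathcal N(0,\gamma r)$.

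The key step is therefore the ergodic-type statement $\frac1t\int_0^t\lambda_i(s)^2ds\to1$ for each $i$. Since $1-\lambda_i^2\in(0,1]$, it is enough to show that
\[
\frac1t\int_0^t\bigl(1-\lambda_i(s)^2\bigr)\,ds=\frac1t\int_0^t\cosh^{-2}\tau_i(s)\,ds\to0 .
\]
For this I would use Theorem~\ref{spectral diffusion}, which says that $\tau=\operatorname{arctanh}\lambda$ is a $BC_r$ Heckman--Opdam diffusion with generator $\frac12\mathcal L_{a,1,2b}$. Writing out the radial SDE, the drift of $\tau_i$ is a sum of terms of the form $\coth$ and $\tanh$ of $\tau_i$, $\tau_i\pm\tau_j$ and $2\tau_i$, weighted by the multiplicities. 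Each such term is bounded below by a positive constant once the $\tau$'s are large and separated, in the Weyl chamber $\tau_1>\dots>\tau_r>0$. This gives the standard result that $\tau(t)/t\to\rho$ almost surely, where $\rho$ is the half-sum of positive roots counted with multiplicities, and $\rho$ has strictly positive coordinates. I would prove it by comparison: the drift of the smallest coordinate $\tau_r$ stays bounded below by a positive constant, say $\frac{b}{2}+\frac12$ after asymptotics, because every $\coth$ term is at least $1$ and every $\tanh$ term tends to $1$. Alternatively, the known law of large numbers for the radial part of Brownian motion on $G/K$ can be invoked. Either way one gets $\tau_i(t)\to\infty$ almost surely for every $i$, hence $\lambda_i(t)\to1$ almost surely, and Cesàro averaging yields $\frac1t\int_0^t\lambda_i^2\,ds\to1$ almost surely.

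The main obstacle is this transience/linear-escape statement for the smallest coordinate $\tau_r$. The $\coth(\tau_i-\tau_j)$ repulsion terms are singular and have both signs, so a naive coordinatewise comparison fails. I would handle this by working with the ordered sums $\tau_r+\dots+\tau_k$, or by appealing to the classical result that Brownian motion on a non-compact symmetric space has radial part $\sim t\rho$ (Malliavin--Malliavin, Taylor), transported through the identification in Theorem~\ref{spectral diffusion}. Note that the weaker statement $\tau_i\to\infty$ suffices, and this also follows from the boundary $\partial\Omega$ being reached in the limit: $X(t)$ leaves every compact set since the space is non-compact with spectral gap, and by $K$-invariance of the setup the radial part escapes to infinity in all directions of the chamber.
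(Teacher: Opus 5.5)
Your proposal is correct and follows the paper's proof essentially step for step. The common skeleton is the martingale representation of Corollary~\ref{martingale representation for S}, the reduction to $\langle S\rangle(t)/t\to\gamma r$, and the law of large numbers for the Heckman--Opdam process. The paper cites \cite[Proposition~4.2]{Schapira_Heckman} directly to get $\tau_i(t)\to+\infty$ almost surely, so no transport from $G/K$ is needed. It then concludes with the martingale CLT, and your Dambis--Dubins--Schwarz plus Brownian scaling finish is equally valid.

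The self-contained escape arguments you sketch would not carry the key step on their own. In the drift of the lower partial sums $\tau_k+\dots+\tau_r$, the cross terms $\coth(\tau_i-\tau_j)+\coth(\tau_i+\tau_j)$ with $i\ge k>j$ are negative and unbounded below near the wall. Likewise, ``$X(t)$ leaves every compact set'' together with $K$-invariance only gives $\tau_1(t)\to\infty$, not $\tau_r(t)\to\infty$. It is therefore the cited law of large numbers that does the work, exactly as in the paper.
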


This is obtained by combining the law of large numbers for the Heckman--Opdam process
\cite[Proposition~4.2]{Schapira_Heckman}, which forces $\lambda_i(t) \to 1$ almost
surely, with the martingale central limit theorem. Our most precise result for the stochastic area is the following
characteristic function formula obtained in Theorem~\ref{characteristic function area}:

\begin{theorem*}
For every $t > 0$, $\alpha > 0$ and $\tau \in \bar{\mathcal{C}}$,
\[
  \mathbb{E}\bigl[e^{i\alpha S(t)} \mid \tau(t) = \tau\bigr]
  = e^{n\alpha\sqrt{\gamma}\, t}
    \frac{p^{a,\, 1+2\alpha\sqrt{\gamma},\, 2b - 2\alpha\sqrt{\gamma}}_t(\tau(0), \tau)}
         {p^{a,\, 1,\, 2b}_t(\tau(0), \tau)}
    \prod_{i=1}^r \bigl(4\cosh\tau_i(0)\cosh\tau_i\bigr)^{\alpha\sqrt{\gamma}},
\]
where $p^{k_1,k_2,k_3}_t$ denotes the heat kernel of the Heckman--Opdam Laplacian
$\frac{1}{2}\mathcal{L}_{k_1,k_2,k_3}$. 
\end{theorem*}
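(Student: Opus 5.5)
The strategy follows Yor's approach: combine the martingale representation of $S$ with Girsanov's theorem to pass to a Heckman--Opdam diffusion with shifted multiplicities. Write $\alpha$ for the Fourier parameter and set $\kappa=\alpha\sqrt{\gamma}$. By Corollary~\ref{martingale representation for S} there is an $r$-dimensional Brownian motion $(\alpha_i)$ with
\[
S(t)=\sqrt{\gamma}\sum_{i=1}^r\int_0^t\lambda_i(s)\,d\alpha_i(s),\qquad \lambda_i=\tanh\tau_i .
\]
The radial process $\tau$ is a diffusion with generator $\tfrac12\mathcal{L}_{a,1,2b}$, driven by an $r$-dimensional Brownian motion $(\beta_i)$; in general $\beta$ is correlated with $\alpha$. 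The key structural input I expect to need is that, conditionally on the whole path of $\tau$, the process $\alpha$ is independent of $\tau$. That is, the "angular" Brownian motions $\alpha_i$ are orthogonal to the radial ones. This should follow from the construction in Corollary~\ref{martingale representation for S}: $\alpha_i$ comes from the imaginary part of the Peirce-$V_{ii}$ components of $\beta$, while $\tau_i$ is driven by the real parts. Granting this, conditioning on $\sigma(\tau(s),s\le t)$ gives
\[
\mathbb{E}\bigl[e^{i\alpha S(t)}\mid \tau\bigr]=\exp\Bigl(-\frac{\alpha^2\gamma}{2}\int_0^t\sum_i\tanh^2\tau_i(s)\,ds\Bigr).
\]
The problem is thereby reduced to computing $\mathbb{E}\bigl[e^{-\frac{\kappa^2}{2}\int_0^t\sum_i\tanh^2\tau_i\,ds}\mid\tau(t)=\tau\bigr]$.

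For the second step, let $h(\tau)=\prod_i(\cosh\tau_i)^{-\kappa}$ and compare $\tfrac12\mathcal{L}_{a,1+2\kappa,2b-2\kappa}$ with the $h$-transform of $\tfrac12\mathcal{L}_{a,1,2b}$. The $BC_r$ operator has drift
\[
\tfrac12\Bigl(\sum_{i<j}k_1\bigl[\coth(\tau_i-\tau_j)+\coth(\tau_i+\tau_j)\bigr]+k_2\coth 2\tau_i+k_3\coth\tau_i\Bigr)\partial_i
\]
(up to normalisation). Using $\coth 2x=\tfrac12(\coth x+\tanh x)$, the shift $(k_2,k_3)\mapsto(k_2+2\kappa,k_3-2\kappa)$ changes the drift by $\kappa(\coth2\tau_i\cdot 2/2\ldots)$, namely by $-\kappa\tanh\tau_i\cdot(-1)$ with the sign as dictated by the normalisation. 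I would verify this precisely: the shift should equal $\nabla\log h$ up to sign, so that one should take $h=\prod\cosh^{\pm\kappa}$. The formula's factor $(\cosh\tau_i(0)\cosh\tau_i)^{\kappa}$ suggests the Girsanov density is
\[
\prod_i\Bigl(\frac{\cosh\tau_i(t)}{\cosh\tau_i(0)}\Bigr)^{\kappa}e^{-\int_0^t V(\tau_s)ds}.
\]
By Itô's formula, $\log\prod\cosh^{\kappa}\tau_i$ has martingale part $\kappa\sum\int\tanh\tau_i\,d\beta_i$, and
\[
e^{-\int V}=\exp\Bigl(-\tfrac12\int\bigl(\mathcal{L}h/h\bigr)\Bigr).
\]
A direct computation should give
\[
\mathcal{L}_{a,1,2b}\prod\cosh^{\kappa}\tau_i\big/\prod\cosh^{\kappa}\tau_i=\kappa^2\sum\tanh^2\tau_i-c+\ldots,
\]
and the claim is that the terms combine to $\kappa^2\sum\tanh^2\tau_i$ minus the constant $-2n\kappa$. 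Here the interaction terms use $\sum_{i<j}[\tanh\tau_i-\tanh\tau_j]\coth(\tau_i-\tau_j)$-type identities, and these yield constants that sum with the others to $r+\tfrac{r(r-1)}2a+rb=n$. The exponent $e^{n\kappa t}$ then comes out of this. The factor $4^{\kappa}$ arises from reconciling normalisations of the heat kernels, or from the density $\prod\cosh$ versus the Heckman--Opdam weight $\prod(\sinh\tau_i)^{k_3}(\sinh2\tau_i)^{k_2}$. The ratio of weights between the two multiplicity choices is $\prod(2\cosh\tau_i)^{2\kappa}$, with a symmetric split between $\tau(0)$ and $\tau$ when passing from kernels with respect to Lebesgue measure to kernels with respect to the weight.

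Finally, I would assemble the result. Under the $h$-transformed measure $\mathbb{Q}$, $\tau$ is a $\tfrac12\mathcal{L}_{a,1+2\kappa,2b-2\kappa}$ diffusion. Hence
\[
\mathbb{E}\bigl[e^{-\frac{\kappa^2}2\int\sum\tanh^2}\,f(\tau(t))\bigr]=e^{n\kappa t}\int f(\tau)\,(\cosh\text{ factors})\,p^{a,1+2\kappa,2b-2\kappa}_t(\tau(0),\tau)\,\mu_{\text{shift}}(d\tau).
\]
Dividing by the law of $\tau(t)$, namely $p^{a,1,2b}_t(\tau(0),\tau)\mu(d\tau)$, and converting the measure ratio gives the stated formula. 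The main obstacle is the second step, namely the exact Itô/commutation identity for $\mathcal{L}h/h$: obtaining the constant $n\kappa$ correctly requires careful handling of the $k_1$ cross terms, and verifying that the $k_3-2\kappa$ multiplicity is the right one when $2b-2\kappa<0$. In that case non-explosion and boundary behaviour at $\tau_r=0$ must be justified, for example by a localisation argument or by analytic continuation in $\alpha$. I would first check the rank-one case (the complex ball) as a sanity test against known formulas.
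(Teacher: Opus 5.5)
Your outline is essentially the paper's proof. It has the same components: the skew-product reduction to $\mathbb{E}\bigl[e^{-\frac{\kappa^2}{2}\int_0^t\sum_i\tanh^2\tau_i(s)\,ds}\mid\tau(t)=\tau\bigr]$, where your orthogonality of the imaginary and real parts of the $V_{ii}$-components plays the role of the paper's $\langle\lambda_i,S\rangle=0$; the Girsanov density $e^{-n\kappa t}e^{-\frac{\kappa^2}{2}\int_0^t\sum_i\tanh^2\tau_i(s)\,ds}\prod_i\bigl(\cosh\tau_i(t)/\cosh\tau_i(0)\bigr)^{\kappa}$, obtained from It\^o's formula for $\log\cosh\tau_i$, the identity $\tanh x\coth 2x+\frac{1}{2\cosh^2x}=1$ and Lemma~\ref{computation coth} (your pairing idea closes, since $(\tanh x-\tanh y)\coth(x-y)+(\tanh x+\tanh y)\coth(x+y)=2$); the shifted generator $\frac12\mathcal{L}_{a,1+2\kappa,2b-2\kappa}$; and the weight ratio $dm_{a,1+2\kappa,2b-2\kappa}/dm_{a,1,2b}=\prod_i(2\cosh\tau_i)^{2\kappa}$, which is where the factor $4^{\kappa}$ comes from. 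Two points resolve your hedging: with the paper's normalization $2k_2\coth(2\tau_i)$ (not $k_2\coth 2\tau_i$, as you wrote) the shift adds exactly $+\kappa\tanh\tau_i$ to the drift, so $h=\prod_i\cosh^{\kappa}\tau_i$ with no sign ambiguity; and the obstacle you flag for $2b-2\kappa<0$ is mild, because the Girsanov drift is bounded by $\kappa$ (Novikov already gives the true martingale property, whereas the paper bounds $\sup_{s\le t}D_\alpha(s)$ via exponential integrability of $\tau_1$ and Doob) and $k_2+k_3=1+2b\ge1$ is unchanged by the shift, so Lemma~\ref{sol SDE} gives well-posedness of the shifted SDE and no analytic continuation is needed.
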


The proof proceeds by a Girsanov change of measure, constructing an explicit exponential
martingale $D_\alpha(t)$ built from $\sum_i \ln\cosh\tau_i(t)$. A key computation using
Lemma~\ref{computation coth} shows that the resulting process after the Girsanov change of measure has generator
$\frac{1}{2}\mathcal{L}_{a,1+2\alpha\sqrt{\gamma}, 2b-2\alpha\sqrt{\gamma}}$, so that the ratio
of heat kernels appears naturally. 

\paragraph{The Jordan determinant winding process in tube domains.}
For domains of tube type (equivalently, $b = 0$ in the Peirce decomposition), the Jordan triple
system $(V, \{\cdot,\cdot,\cdot\})$ carries a canonical Jordan determinant $\mathrm{det}_{ \mathcal J}$,
whose zero set $\mathcal{Z} = \{\mathrm{det}_{ \mathcal J} = 0\}$ is a complex hypersurface in $V$. Writing
$\mathcal J(z) = |\mathrm{det}_{ \mathcal J}(z)| = \prod_{i=1}^r \lambda_i(z)$ for the modulus of the Jordan
determinant, we define the \emph{$d^c$-log-determinant form}
\[
  \theta = 2d^c \log \mathcal J = d\arg(\mathrm{det}_{ \mathcal J}),
\]
and the associated \emph{Jordan determinant winding process}
\[
  A(t) = \int_{X[0,t]} \theta.
\]
The form $\theta /2\pi$ is closed and generates the first integral de Rham cohomology group $H^1_{dR}(\Omega \setminus \mathcal{Z},\mathbb Z) $. 
The process $A(t)$ measures, up to normalisation, the winding number of the process
$t \mapsto \mathrm{det}_{ \mathcal J}(X(t))$ around the origin in $\mathbb{C}$, and is well-defined for
$t > 0$ since $\mathcal{Z}$ is a polar set for $X(t)$. The winding functional is intrinsically tied to the existence of the Jordan determinant and is therefore available only in the tube-type case.  Indeed, one can check that the form $d^c (\log\prod_{i=1}^r \lambda_i )$ is only closed in the tube-type case and that explicit formulas as in Theorem~\ref{characteristic function dc log det} are only available in that case.

By Lemma~\ref{martingale representation for A}, there exists an explicit decomposition
\[
  A(t) = \frac{1}{\sqrt{\gamma}} \sum_{i=1}^r \int_0^t
    \frac{1 - \lambda_i(s)^2}{\lambda_i(s)}\, d\alpha_i(s),
\]
whose quadratic variation $\langle A \rangle(t) = \frac{4}{\gamma}\sum_i \int_0^t
(\coth(2\tau_i(s))^2 - 1)\, ds$ decays much more slowly than in the area case, reflecting the
proximity of the hypersurface $\mathcal{Z}$. The joint process $(\tau(t), A(t))_{t \geq 0}$ is
a diffusion with an explicitly computed generator (Proposition~\ref{skew product log
determinant}). The characteristic function of $A(t)$ is given by
Theorem~\ref{characteristic function dc log det} which reads:

\begin{theorem*}
For every \(t>0\), \(\alpha>0\), and \(\tau\in\bar{\mathcal C}\),
\begin{align*}
\mathbb E\left(e^{i\alpha A(t)}\mid \tau(t)=\tau\right)
& =e^{
\frac{\alpha}{\sqrt\gamma}
r\left(
2+\frac{2\alpha}{\sqrt\gamma}+a(r-1)
\right)t
}
\prod_{i=1}^r
\left(
\sinh(2\tau_i(0))\sinh(2\tau_i)
\right)^{\frac{\alpha}{\sqrt\gamma}}
\frac{
p_t^{a,\,1+2\alpha/\sqrt\gamma,\,0}(\tau(0),\tau)
}{
p_t^{a,\,1,\,0}(\tau(0),\tau)
} \\
&=
2^{-\frac{2\alpha r}{\sqrt\gamma}}
\prod_{i=1}^r
\left(
\tanh(\tau_i(0))\tanh(\tau_i)
\right)^{\frac{\alpha}{\sqrt\gamma}}
\frac{
p_t^{a,\,1-2\alpha/\sqrt\gamma,\,4\alpha/\sqrt\gamma}(\tau(0),\tau)
}{
p_t^{a,\,1,\,0}(\tau(0),\tau)
}.
\end{align*}
\end{theorem*}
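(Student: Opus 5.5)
We follow Yor's Girsanov strategy, mirroring the proof sketched for the area formula. By Lemma~\ref{martingale representation for A} and Proposition~\ref{skew product log determinant}, $A(t)=\frac{1}{\sqrt\gamma}\sum_i\int_0^t \frac{1-\lambda_i^2}{\lambda_i}\,d\alpha_i$, where $\alpha_i$ are Brownian motions, and, with $\lambda_i=\tanh\tau_i$, we have $\frac{1-\lambda_i^2}{\lambda_i}=\frac{2}{\sinh(2\tau_i)}$. Conditionally on the whole path of $\tau$, $A(t)$ is then Gaussian with variance $\langle A\rangle(t)=\frac{4}{\gamma}\sum_i\int_0^t \sinh(2\tau_i)^{-2}\,ds$. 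We rely on the fact, contained in the skew-product structure of Proposition~\ref{skew product log determinant}, that the $\alpha_i$ driving $A$ are orthogonal to the Brownian motions driving $\tau$. This gives
\[
\mathbb E\bigl(e^{i\alpha A(t)}\mid\tau(t)=\tau\bigr)=\mathbb E\Bigl(\exp\Bigl(-\frac{2\alpha^2}{\gamma}\sum_i\int_0^t\frac{ds}{\sinh^2(2\tau_i(s))}\Bigr)\Bigm|\tau(t)=\tau\Bigr).
\]
The task is therefore to compute a Feynman--Kac functional of the $BC_r$ Heckman--Opdam diffusion with generator $\frac12\mathcal L_{a,1,0}$.

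For the first formula, set $\mu=\alpha/\sqrt\gamma$ and $h(\tau)=\prod_i\sinh(2\tau_i)^{\mu}$, and apply Itô's formula to $\log h(\tau(t))$. The drift of $\frac12\mathcal L_{a,1,0}$ in the variable $\tau_i$ consists of the term $\coth(2\tau_i)$ coming from $k_2=1$, plus the $a$-interaction terms $\frac a2\sum_{j\ne i}[\coth(\tau_i-\tau_j)+\coth(\tau_i+\tau_j)]$. The key computation is $\frac12\mathcal L_{a,1,0}h/h$. Using $(\log\sinh 2x)''=-4/\sinh^2(2x)$ and $\coth^2(2x)=1+\sinh^{-2}(2x)$, the one-variable terms produce a constant plus a multiple of $\sinh^{-2}(2\tau_i)$. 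The cross terms $\sum_{i\neq j}\coth(2\tau_i)[\coth(\tau_i-\tau_j)+\coth(\tau_i+\tau_j)]$ must collapse to a constant. This is where we invoke Lemma~\ref{computation coth}, or rather its analogue with $\coth(2\tau_i)$ replacing $\tanh\tau_i$; we expect it to give $r(r-1)$ up to normalization. Collecting terms should yield
\[
\tfrac12\mathcal L_{a,1,0}h=h\Bigl(\mu r\bigl(2+2\mu+a(r-1)\bigr)+c\,\mu(\mu-1)\textstyle\sum_i\sinh^{-2}(2\tau_i)\Bigr),
\]
and we choose the normalization so that the potential matches $-2\mu^2\sum_i\sinh^{-2}(2\tau_i)$ after the Girsanov shift.

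Next define $D(t)=\frac{h(\tau(t))}{h(\tau(0))}\exp\bigl(-\int_0^t\frac{\mathcal L h}{2h}(\tau(s))\,ds\bigr)$, which is a local martingale, and check that it is a true martingale; this holds because $h$ is bounded near the walls and $\mathcal L h/h$ is bounded below. Under $\mathbb Q=D\cdot\mathbb P$, the drift of $\tau_i$ gains $\mu\cdot 2\coth(2\tau_i)$, so the generator becomes $\frac12\mathcal L_{a,1+2\mu,0}$. Writing $e^{-\frac{2\alpha^2}{\gamma}\int\sum\sinh^{-2}}$ as $D(t)^{-1}\,h(\tau(t))/h(\tau(0))\,e^{\text{const}\cdot t}$ and comparing transition densities, we obtain the ratio $p_t^{a,1+2\mu,0}/p_t^{a,1,0}$ multiplied by $h(\tau(0))h(\tau)$, which is the first expression.

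The second expression is obtained the same way with $h(\tau)=\prod_i\tanh(\tau_i)^{\mu}$. Its logarithmic gradient is $\mu\cdot 2/\sinh(2\tau_i)=\mu\bigl(\coth\tau_i-\tanh\tau_i\bigr)$. Using the decomposition $2\coth 2x=\coth x+\tanh x$ of the $BC$ drift, this shifts $(k_2,k_3)=(1,0)$ to $(1-2\mu,4\mu)$, and here the cross terms should cancel exactly, so no exponential factor in $t$ appears. The constant $2^{-2\mu r}$ comes from $\tanh x=\sinh 2x/(2\cosh^2x)$ relating the two normalizations. Alternatively, we can deduce the second formula from the first via a known symmetry of the $BC_r$ heat kernels under the transformation $k\mapsto$ shifted $k$ with a $\cosh$-gauge. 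The main obstacle is the algebraic identity making the cross-interaction terms constant, together with justifying the martingale property of $D$ when $k_2=1-2\mu$ may be small or negative. For the latter, we would first work on $\alpha$ small and conclude by analyticity in $\alpha$, or localize at the walls using polarity of $\mathcal Z$.
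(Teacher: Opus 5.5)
Your route is the paper's. After the skew-product reduction, your $h$-transforms with $h=\prod_i\sinh(2\tau_i)^{\mu}$ and $h=\prod_i\tanh(\tau_i)^{\mu}$ give exactly the paper's martingales $D_\alpha$ and $\widetilde D_\alpha$; the paper writes the first with $\coth^2=1+\sinh^{-2}$, which is where its extra factor $e^{2\alpha^2rt/\gamma}$ comes from. The identification is the same comparison of heat kernels through the ratio of reference measures. Several steps, however, are wrong or unjustified as written.

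\textbf{The key computation.} The coefficient of $\sum_i\sinh^{-2}(2\tau_i)$ in $\frac12\mathcal L_{a,1,0}h/h$ is not a multiple of $\mu(\mu-1)$, and there is no normalization to choose.
\begin{itemize}
\item The second-order part gives $2\mu^2\coth^2(2\tau_i)-2\mu\sinh^{-2}(2\tau_i)=2\mu^2+2\mu(\mu-1)\sinh^{-2}(2\tau_i)$.
\item The drift $\coth(2\tau_i)$ paired with $\partial_i\log h=2\mu\coth(2\tau_i)$ contributes $2\mu+2\mu\sinh^{-2}(2\tau_i)$. You dropped its $\sinh^{-2}$ part.
\item With Lemma~\ref{computation coth bis} for the $a$-terms, one gets $\frac12\mathcal L_{a,1,0}h/h=\mu r(2+2\mu+a(r-1))+2\mu^2\sum_i\sinh^{-2}(2\tau_i)$.
\end{itemize}
The method works because the same $\mu=\alpha/\sqrt\gamma$ produces both the shift $k_2=1+2\mu$ and the potential $2\alpha^2/\gamma$. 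That is a computed fact, not something you can arrange. Relatedly, the identity you need is $e^{-2\mu^2\int_0^t\sum_i\sinh^{-2}(2\tau_i(s))ds}=D(t)\,e^{ct}\,h(\tau(0))/h(\tau(t))$ with $c=\mu r(2+2\mu+a(r-1))$. It is not the version with $D(t)^{-1}$ and $h(\tau(t))/h(\tau(0))$, although your final formula is the correct one.

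\textbf{The martingale justifications.}
\begin{itemize}
\item First transform: boundedness of $h$ near the walls is irrelevant, because $h$ grows like $e^{2\mu\sum_i\tau_i}$ at infinity. You need $D(s)\le Ce^{2\mu r\tau_1(s)}$ together with $\mathbb E\sup_{s\le t}e^{2\mu r\tau_1(s)}<\infty$. The paper gets the latter from Lemma~\ref{exponential integrability} and Doob's inequality for the submartingale $\tau_1$.
\item Second transform: the martingale property is immediate, since $\widetilde D(s)\le\prod_i\tanh(\tau_i(0))^{-\mu}$. Indeed $\tanh\le 1$, the exponential factor is at most $1$, and $\tanh(\tau_i(0))>0$ because $X(0)\in\Omega^\star$. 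No analyticity argument in $\alpha$ is needed. That route would also require analytic dependence of the heat kernels on the multiplicities, which you do not have.
\end{itemize}

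\textbf{What actually needs care when $k_2=1-2\mu<0$.} It is the step ``comparing transition densities''. You must know that $\frac12\mathcal L_{a,1-2\mu,4\mu}$ determines a unique diffusion whose time-$t$ law is $p_t^{a,1-2\mu,4\mu}(\tau(0),\eta)\,dm_{a,1-2\mu,4\mu}(\eta)$. The paper supplies this in Appendix~\ref{sec:Heckman-Opdam_Laplacians}: pathwise uniqueness in Lemma~\ref{sol SDE}, valid since $k_1=a\ge 1$ and $k_2+k_3=1+2\mu\ge 1$, plus existence of the corresponding heat kernel.
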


The long-time asymptotics of the winding
process  obtained in Corollary~\ref{Cauchy limit} contrast sharply with those of the area:

\begin{theorem*}
As $t \to \infty$, the Jordan determinant winding process converges in distribution:
\[
  A(t) \;\Longrightarrow\;
  \mathcal{C}_{\frac{1}{\sqrt{\gamma}}\log\frac{1}{\mathcal J(X(0))}},
\]
where $\mathcal{C}_s$ denotes the Cauchy distribution with scale parameter $s$.
\end{theorem*}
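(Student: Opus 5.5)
The plan is to compute the unconditional characteristic function $\mathbb E[e^{i\alpha A(t)}]$ and show that it converges to $e^{-|\alpha| s}$ with $s=\frac{1}{\sqrt\gamma}\log\frac{1}{\mathcal J(X(0))}$. By symmetry ($A\mapsto -A$ under complex conjugation, or directly from the martingale representation in Lemma~\ref{martingale representation for A}) it suffices to treat $\alpha>0$. The natural choice is the second expression in Theorem~\ref{characteristic function dc log det}. Integrating it against the law of $\tau(t)$, which has density $p_t^{a,1,0}(\tau(0),\cdot)$ with respect to the Heckman--Opdam reference measure $\mu_{a,1,0}$, cancels the denominator. Writing $\beta=\alpha/\sqrt\gamma$, this gives
\[
\mathbb E\bigl[e^{i\alpha A(t)}\bigr]=2^{-2\beta r}\prod_{i=1}^r\tanh(\tau_i(0))^{\beta}\int_{\mathcal C}\prod_{i=1}^r\tanh(\tau_i)^{\beta}\,p_t^{a,1-2\beta,4\beta}(\tau(0),\tau)\,d\mu_{a,1,0}(\tau).
\]

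The next step is to rewrite the reference measure. The density of $\mu_{a,1,0}$ relative to $\mu_{a,1-2\beta,4\beta}$ is a product of powers of $\sinh\tau_i$, $\sinh 2\tau_i$ and $\cosh\tau_i$. Since $\sinh(2\tau)=2\sinh\tau\cosh\tau$, I expect it to combine with $\tanh^{\beta}$ and the factor $2^{-2\beta r}$ into a bounded function of the form $\prod_i\cosh(\tau_i)^{-2\beta}$, or something similarly bounded by $1$ and tending to $0$ at infinity. The integral then becomes $\mathbb E_{\tau(0)}\bigl[F(\tilde\tau(t))\bigr]$ for the Heckman--Opdam process $\tilde\tau$ with multiplicities $(a,1-2\beta,4\beta)$, where $F$ decays as $\tau\to\infty$. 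This requires $1-2\beta\ge 0$, so first restrict to small $\alpha$; for general $\alpha$, use the first formula or analytic continuation. Alternatively, treat the heat kernel with possibly negative $k_2$ as a sub-Markov kernel with killing, which encodes the mass loss.

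The main obstacle is computing $\lim_{t\to\infty}\mathbb E_{\tau(0)}[F(\tilde\tau(t))]$. Because $F\to 0$ at infinity and the process escapes to infinity, a naive limit gives $0$. The limit must instead come from the \emph{total mass} of the possibly defective semigroup. The cleaner route is probably to avoid heat kernels entirely and use the martingale representation. Since $\lambda_i(s)\to 1$ almost surely by \cite[Proposition~4.2]{Schapira_Heckman}, the integrands $(1-\lambda_i^2)/\lambda_i$ decay exponentially, so $\langle A\rangle(\infty)<\infty$ almost surely and $A(t)\to A(\infty)$ almost surely. It then suffices to identify the law of $A(\infty)$. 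By Dambis--Dubins--Schwarz, $A(\infty)=W(\langle A\rangle_\infty)$ for a Brownian motion $W$. If $W$ is independent of $\tau$, which holds because the $\alpha_i$ driving $A$ are orthogonal to the radial noise as in the skew-product of Proposition~\ref{skew product log determinant}, then
\[
\mathbb E\bigl[e^{i\alpha A(\infty)}\bigr]=\mathbb E\bigl[e^{-\alpha^2\langle A\rangle_\infty/2}\bigr].
\]

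I then compute $\mathbb E[e^{-\alpha^2\langle A\rangle_\infty/2}]$ by Feynman--Kac, via the harmonic function $h(\tau)=\prod_i(\tanh\tau_i)^{\beta'}$ with $\beta'=\alpha/\sqrt\gamma$. Using Lemma~\ref{computation coth}, I verify that
\[
\tfrac12\mathcal L_{a,1,0}h-\tfrac{\alpha^2}{2}\cdot\tfrac{4}{\gamma}\sum_i\bigl(\coth^2(2\tau_i)-1\bigr)h=0,
\]
the cross terms in $a$ cancelling in the tube case. Then $h(\tau(t))e^{-\alpha^2\langle A\rangle_t/2}$ is a bounded martingale. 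Since $\tanh\tau_i(t)\to1$, optional stopping gives
\[
\mathbb E\bigl[e^{-\alpha^2\langle A\rangle_\infty/2}\bigr]=h(\tau(0))=\prod_i\lambda_i(0)^{\alpha/\sqrt\gamma}=e^{-\frac{\alpha}{\sqrt\gamma}\log\frac1{\mathcal J(X(0))}},
\]
which is the Cauchy characteristic function. The step I expect to need the most care is checking that $h$ is exactly harmonic for the killed operator, i.e.\ that the $a$-interaction terms vanish. If they do not, I would fall back on the heat-kernel limit described above.
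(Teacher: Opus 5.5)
Your main argument, the bounded Feynman--Kac martingale, is correct. Its key object is the paper's own. With $p=\alpha/\sqrt\gamma$ and $h=\prod_i\tanh(\tau_i)^{p}$, the process $h(\tau(t))e^{-\alpha^2\langle A\rangle_t/2}$ equals $h(\tau(0))\,\widetilde D_\alpha(t)$, the Girsanov density in the proof of Theorem~\ref{characteristic function dc log det}. Your harmonicity claim is the paper's identity $\sum_i d\log\tanh\tau_i(t)=2\sum_i d\beta_i(t)/\sinh(2\tau_i(t))$.

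Where you genuinely differ is the passage $t\to\infty$. The paper writes $\mathbb E[e^{i\alpha A(t)}]=\widetilde{\mathbb E}_\alpha\bigl[\prod_i(\tanh\tau_i(0)/\tanh\tau_i(t))^{p}\bigr]$, whose integrand blows up near the walls. It therefore has to transfer $\tau_i(t)\to\infty$ from $\mathbb P$ to $\widetilde{\mathbb P}_\alpha$ (uniform integrability of $\widetilde D_\alpha$, absolute continuity on $\mathcal F_\infty$), and then truncate with $\prod_i\mathbf 1_{\{\tau_i(t)\ge 1\}}$. You stay under $\mathbb P$, where the martingale is bounded by $1$. Then $\mathbb E[e^{-\alpha^2\langle A\rangle_t/2}]-h(\tau(0))=\mathbb E\bigl[(1-h(\tau(t)))e^{-\alpha^2\langle A\rangle_t/2}\bigr]\to 0$ by bounded convergence and $\tanh\tau_i(t)\to1$. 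This is shorter and needs no change of measure. The paper's version has the merit of being read off from the finite-$t$ formula it has just proved.

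Your passage through $A(\infty)$ is also unnecessary. The finite-$t$ identity $\mathbb E[e^{i\alpha A(t)}]=\mathbb E[e^{-\alpha^2\langle A\rangle_t/2}]$ (Step~1 of that proof), combined with the limit above, already gives the Cauchy characteristic function. You therefore need neither Dambis--Dubins--Schwarz at $t=\infty$ nor an a priori proof that $\langle A\rangle_\infty<\infty$. That finiteness follows anyway by letting $\alpha\to 0$. Your exponential-decay justification would in any case need the linear growth rate from the law of large numbers, not merely $\lambda_i\to 1$.

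Two small corrections.
\begin{enumerate}
\item The interaction terms do not cancel via Lemma~\ref{computation coth}. Since $\coth(u-v)+\coth(u+v)=\sinh(2u)/(\sinh(u-v)\sinh(u+v))$ and $\partial_i\log h=2p/\sinh(2\tau_i)$, the $(i,j)$ and $(j,i)$ contributions are simply opposite. What the tube assumption $b=0$ removes is the drift $b\coth\tau_i$. Otherwise it would contribute $bp\,h/\sinh^2\tau_i$ and destroy harmonicity.
\item Your abandoned heat-kernel heuristic was off. The density ratio combines with $2^{-2pr}\prod_i\tanh(\tau_i)^{p}$ into $\prod_i\coth(\tau_i)^{p}$, which tends to $1$, not $0$. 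So there is no mass loss and no need for $1-2p\ge 0$, since the process with multiplicities $(a,1-2p,4p)$ is conservative by Lemma~\ref{sol SDE}. That route just reproduces the paper's proof.
\end{enumerate}
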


The scale parameter $\frac{1}{\sqrt{\gamma}}\log(1/\mathcal J(X(0)))$ is precisely the logarithmic
distance from the starting point $X(0)$ to the hypersurface $\mathcal{Z}$, consistent with the
heuristic that Brownian paths starting close to the hypersurface they wind around accumulate
more winding before escaping to infinity.

\subsection*{Comparison with existing literature}

As already noted, the study of Brownian functionals whose exact distributions can explicitly be computed, or represented in terms of hypergeometric functions, has a long history dating back to the rigorous introduction of Brownian motion itself. Even for linear Brownian motion, only a few such classes of functionals are known. These include area-type functionals and their relatives, quadratic functionals; winding functionals; and exponential functionals together with  Pitman's \(2M-X\) type functionals. We refer to \cite{YorSomeaspects,YorExponential} for a comprehensive overview.  

Exponential, Pitman's type functionals, and their remarkable identities,  admit natural analogues in geometric or Lie theoretic settings; see, for instance, \cite{BaudoinOConnell,Biane, ChhaibiMR3418540, Directedpolymers} and the more recent work \cite{Chhaibi2024arXiv241206701C}. Our results belong to this tradition and identify analogues of the stochastic area and winding functionals in the full generality of Hermitian symmetric spaces of non-compact type.

Stochastic area functionals have been considered in geometric frameworks before, see \cite{BaudoinBook2024}. In particular, among previous works, the closest antecedent to our analysis of the stochastic area functional on symmetric spaces of non-compact type is \cite{BaudoinDemniWang2023Stiefel}.  
Compared with \cite{BaudoinDemniWang2023Stiefel}, which treats stochastic area on the type-I spaces \(U(p,q)/(U(p)U(q))\) by random-matrix diffusion methods, the present approach applies uniformly to all irreducible Hermitian symmetric spaces of non-compact type, including exceptional domains. Winding functionals have been considered on Riemann surfaces or on circle bundles, see \cite{baudoin2024}. For the Jordan determinant winding functional in tube domains, the only prior result concerns the rank-one type~I case  with \( n=1 \), corresponding to \(X \simeq \mathbb{C}H^1\), which was analysed in \cite[Section~1.2.4]{BaudoinBook2024} and \cite{baudoin2024}.

\subsection*{Organisation of the paper}

Section~2 develops the background on stochastic calculus for $d^c$-forms on K\"{a}hler
manifolds and establishes the key local martingale criterion (Lemma~\ref{divergence free},
Theorem~\ref{martingale property}). Section~3 reviews the Jordan-algebraic structure of bounded
symmetric domains and proves Theorem~\ref{spectral diffusion} on the spectral part of the
Brownian motion, a central structural result of the paper. Section~4 defines the stochastic
area process, establishes its martingale decomposition (Corollary~\ref{martingale representation
for S}), and proves the central limit theorem (Theorem~\ref{CLT area}) and the characteristic
function formula (Theorem~\ref{characteristic function area}). Section~5 treats the Jordan
determinant winding process in tube domains: the decomposition
(Lemma~\ref{martingale representation for A}), the characteristic function
(Theorem~\ref{characteristic function dc log det}), and the Cauchy limit theorem
(Corollary~\ref{Cauchy limit}). 
Appendix A collects new  facts on Heckman--Opdam 
processes and heat kernels, including integral representations and exponential-integrability 
estimates. Those facts generalise some results by Shapira \cite{Schapira_Heckman,Schapira_sharp_estimates} to situations where the multiplicity might be negative. Appendix B gives explicit formulas for the area and winding forms in the classical irreducible Cartan domains, 
and Appendix C contains auxiliary computations used in the proofs.

\subsection*{Frequently used notations}

\begin{center}
\begin{tabular}{c|l}
Notation & Meaning \\
\hline
 \\
\(d^c\) & \( \frac{1}{2 \mathrm{i}} ( \partial -\bar \partial)\) \\
\( \Omega \subset V \)& bounded symmetric domain in the complex vector space $V$ \\
\( B(z,z) \) & Bergman operator \\
\( ( \cdot | \cdot) \) & Hermitian trace form in $V$\\
\(r\) & rank of the bounded symmetric domain $\Omega$ \\
\(a,b\) & characteristic multiplicities \\
\(\gamma\) & genus, \(\gamma=(r-1)a+b+2\) \\
\(\Phi\) & Canonical Kähler potential on $\Omega$\\
\(\omega\) & Canonical Kähler form, \(\omega=i\partial\bar\partial\Phi\) \\

\(\alpha\) & Kähler \(d^c\)-potential form, \(\alpha=d^c\Phi\) \\

\(S(t)\) & Stochastic symplectic area process,
\(S(t)=\int_{X[0,t]}\alpha\) \\

\(\det_{\mathcal J}\) & Jordan determinant in the tube-type case \\

\(\mathcal J(z)\) & Modulus of the Jordan determinant,
\(\mathcal J(z)=|\det_{\mathcal J}(z)|\) \\

\(\theta\) & Jordan determinant winding form,
\(\theta=2d^c\log {\mathcal J}=d\arg(\det_{\mathcal J})\) \\

\(A(t)\) & Jordan determinant winding process,
\(A(t)=\int_{X[0,t]}\theta\) \\

\(\lambda_i\) & Jordan spectral values \\
\(\tau_i\) & radial variables, \(\tau_i=\operatorname{arctanh}\lambda_i\) \\ 
\( \mathcal C \) & \( \left\{ \tau \in \mathbb{R}^r : 0< \tau_r < \cdots <  \tau_1  \right\} \) \\
\( \bar{\mathcal C} \) & \( \left\{ \tau \in \mathbb{R}^r : 0\le \tau_r \le \cdots \le  \tau_1  \right\} \) \\
\(\mathcal{L}_{k_1,k_2,k_3}\) & \(BC_r\) Heckman--Opdam Laplacian \\
 & \small{ \( \sum_{i=1}^r \frac{\partial^2}{\partial \tau_i^2}
 + \sum_{i=1}^r \Bigg( k_3 \coth(\tau_i) + 2k_2 \coth(2\tau_i)+k_1 \sum_{j\neq i} \bigl( \coth(\tau_i - \tau_j) + \coth(\tau_i + \tau_j) \bigr) \Bigg) \frac{\partial}{\partial \tau_i} \) }\\
 \(m_{k_1,k_2,k_3}\) & corresponding reference measure  \\
  & \small{\( \prod_{i=1}^r |\sinh \tau_i|^{k_3}\,|\sinh (2\tau_i)|^{k_2}\;
 \prod_{1\le i<j\le r}\Bigl|\sinh(\tau_i-\tau_j)\,\sinh(\tau_i+\tau_j)\Bigr|^{k_1} d\tau \)}  \\
\(p_t^{k_1,k_2,k_3}\) & corresponding heat kernel (see Theorem \ref{heat kernel formula})  \\
\( \int Y \circ d X \) & Stratonovich stochastic integral \\
\(  \int Y dX \) & It\^o stochastic integral \\

\(\langle M\rangle\) & Quadratic variation of a semimartingale \(M\) \\
 \\
\hline
\end{tabular}
\end{center}

The stochastic processes we consider are defined on a filtered probability space whose filtration $(\mathcal F_t)_{t \ge 0}$ satisfies the usual conditions.

\section{Preliminaries on stochastic calculus in  K\"ahler manifolds}

The area and winding functionals are both Stratonovich line integrals of the form $\int_{X[0,1]}d^c\Phi$ for an appropriate $\Phi$ where $X$ is a Brownian motion on a K\"ahler manifold. This section develops the necessary stochastic calculus in K\"ahler manifolds and proves the general martingale criterion for such integrals that will be applied twice later.

\subsection{K\"ahler manifolds}
\label{Preliminaries on complex geometry}

We recall the conventions from K\"ahler geometry used throughout the paper;
see \cite{moroianu_lectures} for background. Let \(M\) be a complex manifold
of complex dimension \(n\), and let \(z_j=x_j+\ic y_j\) be local holomorphic
coordinates. We use the notations
\[
\frac{\partial}{\partial z_j}
=
\frac12\left(
\frac{\partial}{\partial x_j}
-\ic\frac{\partial}{\partial y_j}
\right),
\qquad
\frac{\partial}{\partial \bar z_j}
=
\frac12\left(
\frac{\partial}{\partial x_j}
+\ic\frac{\partial}{\partial y_j}
\right),
\]
and
\[
dz_j=dx_j+\ic dy_j,
\qquad
d\bar z_j=dx_j-\ic dy_j.
\]
The complex structure \(J\) is the $(1,1)$-tensor characterized by
\[
J\frac{\partial}{\partial z_j}
=
\ic\frac{\partial}{\partial z_j},
\qquad
J\frac{\partial}{\partial \bar z_j}
=
-\ic\frac{\partial}{\partial \bar z_j}.
\]

Let
\[
h=\sum_{i,j=1}^n h_{i\bar j}\,dz_i\otimes d\bar z_j
\]
be a Hermitian metric. The associated Riemannian metric and real two-form are
\[
g=\operatorname{Re}h
=
\frac12\sum_{i,j=1}^n h_{i\bar j}
\left(dz_i\otimes d\bar z_j+d\bar z_i\otimes dz_j\right),
\]
and
\[
\omega=-\operatorname{Im}h
=
\frac{\ic}{2}
\sum_{i,j=1}^n h_{i\bar j}\,dz_i\wedge d\bar z_j.
\]
We say that \(h\) is K\"ahler if \(d\omega=0\). Equivalently, the complex
structure is parallel for the Levi-Civita connection of \(g\), that is
\(\nabla J=0\). We use the convention
\[
\omega(u,v)=g(Ju,v),
\qquad u,v\in T_xM.
\]

If \((h^{i\bar j})\) denotes the inverse matrix of \((h_{i\bar j})\), then
the Laplace--Beltrami operator of \(g\) is
\begin{align}
\label{fomula-laplacian}
\Delta
=
4\sum_{i,j=1}^n
h^{i\bar j}
\frac{\partial^2}{\partial z_i\,\partial\bar z_j}.
\end{align}
The absence of first-order terms in this expression is one of the main
characteristics of the K\"ahler setting. We write the exterior derivative \(d=\partial+\bar\partial\), with
\[
\partial^2=\bar\partial^2=0,
\qquad
\partial\bar\partial+\bar\partial\partial=0,
\]
and we define the $d^c$ operator:
\[
d^c
=
\frac{1}{2\ic}(\partial-\bar\partial).
\]
Thus, for a real-valued smooth function \(f\),
\begin{align}\label{dc formula}
d^cf
=
-\frac12\,df\circ J,
\qquad
(d^cf)^\sharp
=
\frac12 J\nabla f,
\end{align}
where \(^{\sharp}\) denotes the musical isomorphism induced by \(g\). A smooth function \(\Phi\) is a K\"ahler potential if
\[
\omega=\ic\,\partial\bar\partial\Phi.
\]
With the above definition of \(d^c\), this is equivalent to
\[
d(d^c\Phi)=\omega.
\]
Accordingly, \(d^c\Phi\) will be called the K\"ahler \(d^c\)-potential form
associated with \(\Phi\). As usual, K\"ahler potentials are local in general,
and two local potentials differ by the real part of a holomorphic function.

\subsection{Stochastic line integrals of $d^c$-forms}

Let $(X(t))_{t \ge 0}$ be a Brownian motion on a connected K\"ahler manifold $(M,\omega,g)$, that is, a diffusion with generator $\frac{1}{2} \Delta$ where $\Delta$ is the Laplace-Beltrami operator of the Riemannian metric $g$. We assume that the lifetime of $(X(t))_{t \ge 0}$ is infinite, i.e. that $M$ is stochastically complete.

\begin{lemma}\label{divergence free}
Let $\eta$ be a smooth one-form on $M$, then the Stratonovich line integral $\left( \int_{X[0,t]} \eta \right)_{t\ge 0}$ is a local martingale if and only if $\eta$ is co-closed, i.e. it is divergence free.
\end{lemma}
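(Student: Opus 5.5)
Convert the Stratonovich line integral into an Itô integral plus a drift, and read off the bounded variation part. In local coordinates $x^1,\dots,x^{2n}$ write $\eta=\eta_k\,dx^k$. By definition,
\[
\int_{X[0,t]}\eta=\int_0^t \eta_k(X(s))\circ dX^k(s).
\]
The Itô--Stratonovich correction gives
\[
\int_0^t \eta_k(X(s))\, dX^k(s)+\frac12\int_0^t \partial_j\eta_k(X(s))\, d\langle X^j,X^k\rangle(s).
\]
Since $X$ is a diffusion with generator $\frac12\Delta$, we have $d\langle X^j,X^k\rangle=g^{jk}\,ds$. Its semimartingale decomposition is $dX^k=dM^k+\frac12\Delta x^k\,ds$, with $\Delta x^k=-g^{ij}\Gamma^k_{ij}$. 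Collecting the $ds$ terms, the drift of the line integral is
\[
\frac12\bigl(g^{jk}\partial_j\eta_k-g^{ij}\Gamma^k_{ij}\eta_k\bigr)=\frac12\, g^{jk}\nabla_j\eta_k=-\frac12\,\delta\eta .
\]
To avoid coordinates one can instead use the intrinsic formula $\int_{X[0,t]}\eta=\int_0^t\langle \eta(X(s)), dX(s)\rangle_{\text{Itô}}-\frac12\int_0^t \delta\eta(X(s))\,ds$, where the Itô part is the martingale term driven by an orthonormal frame of the horizontal lift. Either way, we obtain
\[
\int_{X[0,t]}\eta=N(t)-\frac12\int_0^t(\delta\eta)(X(s))\,ds
\]
with $N$ a continuous local martingale, whose quadratic variation is $\int_0^t|\eta|^2(X(s))\,ds$.

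If $\delta\eta=0$, the line integral equals $N$ and is therefore a local martingale. Conversely, suppose the line integral is a local martingale. Then the continuous finite-variation process $\int_0^t\delta\eta(X(s))\,ds$ is a local martingale started at $0$, hence identically zero almost surely. Differentiating, $\delta\eta(X(t))=0$ for all $t$ almost surely, by continuity in $t$.

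Running this argument from every starting point $x\in M$ and letting $t\to0$ gives $\delta\eta(x)=0$. Since $x$ is arbitrary, $\delta\eta\equiv0$ on $M$. If the statement is read for a fixed initial point only, the step $\delta\eta(X(t))=0$ for all $t$ still suffices: the law of $X(t)$ has full support (positive heat kernel on the connected manifold $M$), so $\delta\eta=0$ on all of $M$ by continuity.

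The main obstacle is the identification of the drift with $-\frac12\delta\eta$, specifically the correct handling of the Christoffel correction. It comes from the drift of the coordinate processes, and I would verify it by checking the case $\eta=df$. There the line integral is $f(X(t))-f(X(0))$, whose drift is $\frac12\Delta f=-\frac12\delta df$, which matches the formula. The Kähler structure plays no role in this lemma; it only matters later, when it is applied to $\eta=d^c\Phi$.
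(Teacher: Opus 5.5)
Your proposal is correct and follows essentially the same route as the paper. You convert the Stratonovich integral to It\^o form in local coordinates, identify the drift as $-\frac12\delta\eta$ using $d\langle X^i,X^j\rangle=g^{ij}\,dt$ and the Christoffel drift of the coordinate processes, and take the martingale part from the horizontal lift, which is what makes it chart-independent.

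Your explicit argument for the converse fills in a step the paper only asserts with ``as a consequence''. That argument is: a continuous finite-variation local martingale vanishes, then continuity of $s\mapsto\delta\eta(X(s))$ together with positivity of the heat kernel forces $\delta\eta\equiv 0$.
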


\begin{proof}
    Let $(X(t))_{t \ge 0}$ be a Brownian motion on $M$ obtained from the Eells-Elworthy-Malliavin construction on the orthonormal frame bundle, see \cite[Proposition 3.2.2]{Hsu2002}.  Thanks to \cite[Definition 2.4.1]{Hsu2002}, the Stratonovich line integral $\int_{X[0,t]} \eta $ is given by
    \begin{align}\label{line integral def}
    \int_{X[0,t]} \eta =\sum_{i=1}^{2n}\int_0^t \eta (U(s)\mathrm{e}_i) \circ dW^{i}(s)
    \end{align}
    where $U(t)$ is the horizontal Brownian motion on the orthonormal frame bundle of $M$, $\mathrm{e}_i$ is the canonical basis of $\mathbb{R}^{2n}$ and $W(t)$ a Brownian motion in $\mathbb{R}^{2n}$. The bounded variation part of \eqref{line integral def} can then be computed in a local coordinate system.
    For a real local coordinate system $(x^1,\ldots,x^{2n})$, let $g^{ij},\sqrt{g}^{ij}$ denote the inverses of the metric tensor and of its square root respectively. We write 
    \[
        \eta = \sum_{i=1}^{2n}\eta_idx^i
        .
    \]
    For $(B^{i}(t),i=1,\ldots,2n)_{t\ge0}$ a $2n$-dimensional Brownian motion, and $\Gamma_{ij}^{k}$ the Christoffel symbols associated to the Riemannian metric $g$, one can locally describe the dynamic of $X$ as
    \[
        dX^i(t)=\sum_{j=1}^{2n}\sqrt{g}^{ij}(X(t))dB^j(t) - \frac{1}{2}\sum_{j,k,l=1}^{2n}\sqrt{g}^{jk}\sqrt{g}^{jl}\Gamma_{kl}^{i}(X(t))dt
    \]
    up to the random time $\tau$ when $(X(t))_{t\ge0}$ leaves the open set associated with the local coordinate system.
    Relying on Example~2.4.4 in \cite{Hsu2002}, for the semimartingale $X(t)$ considered up to the exit time $\tau$, we have
    \[
        S(t)
        =
        \int_{X[0,t]} \eta
        =
        \sum_{i=1}^{2n}\int_{0}^{t}\eta_i(X(s))\circ dX^{i}(s)
        .
    \]
    Applying Itô's formula to $\eta_i(X_s)$ and since $d\left\langle X^i,X^j \right\rangle(t) = g^{ij}(X(t))dt$, we get
    \[
        \left\langle \eta_{i}(X_{\cdot}), X_{\cdot}^i \right\rangle (t)
        =
        \sum_{j=1}^{2n}\int_{0}^{t} \frac{\partial \eta_i}{\partial x^j}(X(s))g^{ij}(X(s))ds
    \]
    Combining the three above identities, we finally obtain the existence of a local martingale $M(t)$ defined up to $\tau$ such that
    \[
        S(t) - M(t)
        =
        \frac{1}{2}\int_{0}^{t}
        \sum_{i,j=1}^{2n}g^{ij}(X(s))
        \left(  
            \frac{\partial\eta_j}{\partial x^i}- \sum_{p=1}^{2n} \eta_p \Gamma_{ij}^{p}
        \right)
        \left(X(s)\right)
        ds
        = - \frac{1}{2}\int_{0}^{t} \delta\eta\left( X(s)\right)ds
        .
    \]
   From \eqref{line integral def}, the local martingale $M(t)$ is given by the It\^o integral
   \[
   M(t)=\sum_{i=1}^{2n}\int_0^t \eta (U(s)\mathrm{e}_i) dW^{i}(s),
   \]
   which is independent from the chart we are considering.
   Using a covering of $M$ by coordinate charts we have for all $t \ge 0$
   \[
   S(t)=\sum_{i=1}^{2n}\int_0^t \eta (U(s)\mathrm{e}_i) dW^{i}(s)- \frac{1}{2}\int_{0}^{t} \delta\eta\left( X(s)\right)ds
   \]
    As a consequence the desired equivalence is proved.
\end{proof}

\begin{theorem}\label{martingale property}
Let $\Phi:M \to \mathbb{R}$ be a smooth function. The process
\[ 
S(t):=\int_{X[0,t]} d^c \Phi
\]
is a local martingale with quadratic variation $\frac{1}{4} \int_0^t \| \nabla \Phi \|^2 (X(s))  ds$, where $\| \nabla \Phi \|$ denotes the norm of the gradient of $\Phi$  for the Riemannian metric $g$.
\end{theorem}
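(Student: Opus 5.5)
By Lemma~\ref{divergence free}, it suffices to show that the one-form $\eta=d^c\Phi$ is co-closed, i.e.\ $\delta(d^c\Phi)=0$. The quadratic variation will then follow from the explicit It\^o representation obtained in the proof of that lemma.

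\textbf{Co-closedness.} By \eqref{dc formula}, the vector field dual to $d^c\Phi$ is $(d^c\Phi)^\sharp=\frac12 J\nabla\Phi$, so up to sign $\delta(d^c\Phi)=\frac12\operatorname{div}(J\nabla\Phi)$. In a local orthonormal frame $(e_k)$,
\[
\operatorname{div}(J\nabla\Phi)=\sum_k g(\nabla_{e_k}(J\nabla\Phi),e_k).
\]
Since the metric is K\"ahler, $\nabla J=0$, and hence
\[
\operatorname{div}(J\nabla\Phi)=\sum_k g(J\nabla_{e_k}\nabla\Phi,e_k)=-\sum_k \nabla^2\Phi(e_k,Je_k).
\]
Here we used that $J$ is $g$-skew and $g(\nabla_{e_k}\nabla\Phi,\cdot)=\nabla^2\Phi(e_k,\cdot)$. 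The Hessian $\nabla^2\Phi$ is symmetric, while $(u,v)\mapsto g(u,Jv)$-type contractions against $J$ are antisymmetric. Therefore $\sum_k\nabla^2\Phi(e_k,Je_k)=\operatorname{tr}(\nabla^2\Phi\circ J)$ is the trace of a symmetric form against a skew endomorphism, which vanishes. Concretely, replacing the frame $(e_k)$ by the orthonormal frame $(Je_k)$ gives the same sum but with the opposite sign.

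Alternatively, one can verify this in holomorphic coordinates. Using \eqref{fomula-laplacian}, one has $\delta\eta\propto\sum h^{i\bar j}(\partial_{\bar j}\eta_i+\partial_i\eta_{\bar j})$ in the K\"ahler case, because the mixed Christoffel symbols vanish. For $\eta=\frac{1}{2\ic}(\partial\Phi-\bar\partial\Phi)$ this becomes
\[
\frac{1}{2\ic}\sum h^{i\bar j}\bigl(\partial_{\bar j}\partial_i\Phi-\partial_i\partial_{\bar j}\Phi\bigr)=0.
\]
This is probably the cleanest check. The only delicate point is making sure that no first-order term survives, and the K\"ahler condition is exactly what guarantees this.

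\textbf{Quadratic variation.} From the proof of Lemma~\ref{divergence free},
\[
S(t)=\sum_i\int_0^t\eta(U(s)\mathrm{e}_i)\,dW^i(s).
\]
Hence
\[
d\langle S\rangle(t)=\sum_i\eta(U\mathrm{e}_i)^2\,dt=\|\eta\|^2(X(t))\,dt,
\]
because $U(s)$ is an isometry. Finally,
\[
\|\eta\|=\|(d^c\Phi)^\sharp\|=\tfrac12\|J\nabla\Phi\|=\tfrac12\|\nabla\Phi\|,
\]
since $J$ is orthogonal. This gives $\langle S\rangle(t)=\frac14\int_0^t\|\nabla\Phi\|^2(X(s))\,ds$.

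The main obstacle is the co-closedness computation, specifically using $\nabla J=0$ correctly. The rest is immediate.
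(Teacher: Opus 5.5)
Your proof is correct and takes essentially the same route as the paper. You reduce to Lemma~\ref{divergence free} and show $\operatorname{div}(J\nabla\Phi)=0$ from $\nabla J=0$, the skew-symmetry of $J$ and the symmetry of the Hessian; your frame swap $e_k\mapsto Je_k$ is exactly the paper's pairing of terms in an adapted frame $\{e_k,Je_k\}$. You then read off the quadratic variation from $\|d^c\Phi\|=\tfrac12\|J\nabla\Phi\|=\tfrac12\|\nabla\Phi\|$, as the paper does; your holomorphic-coordinate check and your explicit derivation of $\langle S\rangle$ from the It\^o representation in the proof of the lemma are correct extras that the paper leaves implicit.
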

\begin{proof}
Let $\alpha=d^c \Phi$. Thanks to \eqref{dc formula} we have
\[
\alpha^\sharp=\frac{1}{2} J\nabla\Phi.
\]
 The local martingale property will then follow from Lemma \ref{divergence free} thanks to the fact 
that $J\nabla\Phi$ is a divergence free vector field which implies that 
$\alpha$ is co-closed. To  compute the  divergence of $J\nabla\Phi$, choose a local $g$--orthonormal real frame $\{ E_a, 1 \le a \le 2n \}$ adapted to $J$, i.e.\ $\{ E_a, 1 \le a \le 2n \}=\{e_1,\dots,e_n,Je_1,\dots,Je_n\}$.
Using $\nabla J=0$, the skew-symmetry of $J$ with respect to $g$, $J^2=-\mathrm{Id}$  and the definition $\operatorname{div}X=\sum_a g(\nabla_{E_a}X,E_a)$ for the orthonormal frame $\{E_a\}$, we get
\begin{align*}
\operatorname{div}(J\nabla\Phi)
&=\sum_{k=1}^n \Big( g(\nabla_{e_k}(J\nabla\Phi),e_k)+g(\nabla_{Je_k}(J\nabla\Phi),Je_k)\Big)\\
&=\sum_{k=1}^n \Big( g(J\nabla_{e_k}\nabla\Phi,e_k)+g(J\nabla_{Je_k}\nabla\Phi,Je_k)\Big)\\
&=\sum_{k=1}^n \Big( -g(\nabla_{e_k}\nabla\Phi,Je_k)\;-\;g(\nabla_{Je_k}\nabla\Phi,J(Je_k))\Big)\\
&=\sum_{k=1}^n \Big( -\mathrm{Hess}(\Phi)(e_k,Je_k)\;+\;\mathrm{Hess}(\Phi)(Je_k,e_k)\Big).
\end{align*}
Here $\mathrm{Hess}(\Phi)(U,V):=g(\nabla_U\nabla\Phi,V)$ is the Riemannian Hessian, which is symmetric because $\nabla$ is torsion free:
\[
\mathrm{Hess}(\Phi)(Je_k,e_k)=\mathrm{Hess}(\Phi)(e_k,Je_k).
\]
Thus each summand cancels and we conclude
\[
\operatorname{div}(J\nabla\Phi)=0,
\]
i.e.\ $\operatorname{div}(\alpha^\sharp)=0$. Equivalently, the codifferential satisfies $\delta\alpha=0$. Therefore, from Lemma \ref{divergence free}, $S(t)$ is a local martingale. Its quadratic variation is given by
\[
\left \langle S \right\rangle (t)=\int_0^t \| \alpha \|^2 (X(s)) ds=\frac{1}{4}\int_0^t \| J\nabla\Phi \|^2 (X(s)) ds=\frac{1}{4}\int_0^t \| \nabla\Phi \|^2 (X(s)) ds,
\]
where we used the fact that $J$ is an isometry.
\end{proof}

\section{Brownian motion in bounded symmetric domains}

In this section we study the Brownian motion and its spectral values in Hermitian symmetric spaces of non--compact type which are realized as bounded symmetric domains.  The main result of the section is Theorem \ref{spectral diffusion}.

\subsection{Preliminaries on bounded symmetric domains}
\label{Preliminaries on bounded symmetric domains}
We begin by recalling some standard material on bounded symmetric domains and Jordan algebras; for further background see \cite{Faraut-Koranyi,FarautKoranyi1994, Book_complexdomains,Loos1977}. Let $X = G/K$ be an irreducible Hermitian symmetric space of non--compact type and rank $r$. Via the Harish--Chandra embedding, $X$ is realized as a bounded symmetric domain $\Omega \subset V$, where $V$ is a complex vector space of complex dimension $n$ endowed with the structure of a \emph{(positive) Hermitian Jordan triple system}.

Recall that a Hermitian Jordan triple system is a complex vector space $V$ equipped with a triple product
\[
\{x\,y\,z\} \in V,
\]
which is $\mathbb{C}$--bilinear in $x$ and $z$, conjugate--linear in $y$, symmetric in $x$ and $z$, and satisfies the Jordan triple identity
\[
\{x\,y\,\{u\,v\,w\}\}
=
\{\{x\,y\,u\}\,v\,w\}
-
\{u\,\{y\,x\,v\}\,w\}
+
\{u\,v\,\{x\,y\,w\}\},
\qquad
\forall\, x,y,u,v,w \in V.
\]

Associated to this structure are the operators
\[
D(x,y)z := \{x\,y\,z\}, 
\qquad
Q(x)z := \frac{1}{2}\{x\,z\,x\}.
\]

The \emph{canonical Hermitian trace form} on $V$ is the sesquilinear form
\begin{equation}\label{eq:trace-form}
(x|y) := \tr\bigl(D(x,y)\bigr),
\end{equation}
where $\tr$ denotes the ordinary trace of the endomorphism $D(x,y)\in \mathrm{End}_{\mathbb C}(V)$.

The \emph{Bergman operator} is defined for $x,y \in V$ by
\[
B(x,y) := \mathrm{Id} - D(x,y) + Q(x)Q(y).
\]
For $z \in \Omega$, the operator $B(z,z)$ is Hermitian and positive definite with respect to the trace form \eqref{eq:trace-form}. The corresponding Bergman Hermitian metric can be written intrinsically as
\begin{equation}\label{eq:bergman-metric-jordan}
h_z(u,v)=\bigl(B(z,z)^{-1}u \,\big|\, v\bigr),
\qquad 
u,v\in V\simeq T^{1,0}_z\Omega.
\end{equation}
Here $h_z$ is viewed as a sesquilinear form on $T^{1,0}_z\Omega \times T^{1,0}_z\Omega$, related to the $\mathbb{C}$-bilinear convention of  Section \ref{Preliminaries on complex geometry} by the identification $T^{0,1}_z\Omega\cong\overline{T^{1,0}_z\Omega}$. From now on, $h$ will always refer to the sesquilinear convention.
The metric $h$ is K\"ahler and coincides at $z=0$ with the Hermitian trace form.

Define
\[
K(z,w) := \mathrm{det} B(z,w)^{-1}.
\]
If $dz$ denotes the Lebesgue measure on $V\simeq\mathbb C^n$, then the Riemannian volume measure associated with the Bergman metric is, up to a positive multiplicative constant,
\begin{equation}\label{eq:volume-detB}
d\mu(z) = K(z,z)\, dz.
\end{equation}
Moreover, $\Omega$ admits the intrinsic characterization
\[
\Omega=\{\, z\in V : K(z,z) > 0 \,\},
\]
and $K$ coincides, up to a constant factor, with the Bergman kernel of $\Omega$. The canonical $G$--invariant K\"ahler form is given by
\[
\omega = i \partial \bar{\partial} \log K(z,z),
\]
so that a global K\"ahler potential is
\[
\Phi(z) = \log K(z,z) = -\gamma \log N(z,z),
\]
where the \emph{generic norm} is defined by
\[
N(z,w):=K(z,w)^{-1/\gamma}
\]
and where $\gamma$ is an integer called the \emph{genus} of $\Omega$. 
\medskip

A fundamental structural tool in the Jordan--theoretic description of bounded symmetric domains is the notion of a \emph{Jordan frame}, which yields a spectral (polar) decomposition in $V$.

An element $e\in V$ is called a \emph{tripotent} if $\{e,e,e\}=2e$. For every tripotent $e$, the space $V$ admits the $(\cdot|\cdot)$--orthogonal Peirce decomposition
\[
V=V_0(e)\oplus V_1(e)\oplus V_2(e),
\]
where
\[
V_j(e)=\{\, v \in V : D(e,e)v= j v\,\}, 
\qquad j=0,1,2.
\]
A tripotent $e$ is said to be \emph{primitive} if $\dim V_2(e)=1$, equivalently $V_2(e)=\mathbb{C}e$. Two tripotents $e,f$ are \emph{orthogonal} if $D(e,f)=0$.

A \emph{Jordan frame} is a maximal family
\[
(e_1,\dots,e_r)
\]
of pairwise orthogonal primitive tripotents; the integer $r$ coincides with the rank of $\Omega$. Relative to a Jordan frame, one has the joint Peirce decomposition
\[
V=\bigoplus_{0\le i\le j\le r} V_{ij},
\]
where $V_{00}=0$ and
\[
V_{ij}
=
\left\{ 
v \in V : 
D(e_k,e_k)v= (\delta_{ik}+\delta_{jk}) v,
\ \forall k=1,\dots,r
\right\}.
\]
This decomposition is orthogonal with respect to the canonical Hermitian form.

There exist integers $a,b\in\mathbb{N}$, called the \emph{characteristic multiplicities}, independent of the choice of Jordan frame, such that
\[
\dim V_{ij} = a \quad (1 \le i < j \le r), 
\qquad 
\dim V_{0i} = b \quad (1 \le i  \le r), 
\qquad 
\dim V_{ii} = 1 \quad (1 \le i  \le r),
\]
and 
\begin{align}\label{dimension}
n=\dim V = r + \frac{r(r-1)}{2}a + rb, 
\qquad 
\gamma=(r-1)a+b+2.
\end{align}

\medskip

In the Harish--Chandra realization $\Omega \simeq G/K$, the compact group $K$ is the isotropy subgroup of $G=\mathrm{Aut}(\Omega)^\circ$ at $0$. It identifies with the connected component of the group of linear triple automorphisms of $V$. 
In particular, $K$ acts linearly on $V$ and preserves $\Omega$.

With respect to a Jordan frame, one has a polar decomposition analogous to the Cartan $KAK$ decomposition in $G$ and to the singular value decomposition in matrix theory. More precisely, for every $z\in \Omega$ there exist $k\in K$ and uniquely determined real numbers
\[
1>\lambda_1 \ge \lambda_2 \ge \cdots \ge \lambda_r \ge 0
\]
such that
\begin{align}\label{polar decomposition}
z = k\cdot \left(\sum_{j=1}^r \lambda_j e_j\right).
\end{align}
The numbers $\lambda_j$ are called the \emph{spectral values} of $z$.

From the generic norm $N$, one can define the generic minimal polynomial as a function of $T$ by
\[
    m(T,x,y) = T^r N(T^{-1}x,y).
\]
It is known \cite[(4.15), proof of (4.16)]{Loos1977} that its coefficients are polynomial functions of $(x_i,\bar{y}_j:1 \le i,j\le n)$ and that the squared spectral values $\lambda_i^2$ of $z\in\Omega$ are the roots of $m(T,z,z)$. As a consequence the spectral values are continuous functions of $z$---we refer to \cite[(9.17.4)]{dieudonne1960foundations}.

The \emph{regular set} of $\Omega$ is defined as
\[
\mathrm{reg} (\Omega):=\left\{  z \in \Omega :  1>\lambda_1(z) > \cdots > \lambda_r(z) >0 \right\}.
\]
It is open since the spectral values are continuous functions of $z$. Moreover $\mathrm{reg} (\Omega)$ is dense in $\Omega$ because of the linearity of $(\lambda_{i})_{1 \le i \le r} \mapsto \sum_{j=1}^r \lambda_j e_j$ and the fact that $K$ acts linearly on $V$.
Since the discriminant polynomial $\Delta$ of the generic minimal polynomial is different from zero on $\mathrm{reg}(\Omega)$, applying the implicit function theorem to $m(s,z,z)=0$ yields the smoothness of $\lambda: z \to (\lambda_1(z),\cdots,\lambda_r(z))$ on $\mathrm{reg}(\Omega)$.

\medskip

By $K$--invariance of the trace form one has
\[
(z|z) = \gamma \sum_{j=1}^r \lambda_j^2.
\]

For $e=\sum_{i=1}^r\lambda_ie_i$, the Bergman operator preserves each Peirce space and acts by scalar multiplication:
\[
B(e,e)|_{V_{ij}}=(1-\lambda_{i}^{2})(1-\lambda_{j}^{2})\,\mathrm{Id}
\quad (i<j),
\]
\[
B(e,e)|_{V_{i0}}=(1-\lambda_{i}^{2})\,\mathrm{Id},
\]
\begin{align}\label{action bergman}
B(e,e)|_{V_{ii}}=(1-\lambda_{i}^{2})^{2}\,\mathrm{Id}.
\end{align}
Eventually, these expressions define $B(z,z)$ for any $z \in \Omega$ written in polar form $z = k\cdot \sum_{i=1}^r \lambda_i e_i$ through the relation
\[
    B(k\cdot z, k \cdot z)= k\, B(z,z) \,k^{-1}
    ,
    \qquad \forall k\in K
    .
\]
Thus $B(z,z)$ is block--diagonal with respect to the Peirce decomposition, and
\[
N(z,z)=\prod_{j=1}^r (1-\lambda_j^2).
\]

Finally, from Formula 1.11 in \cite{Faraut-Koranyi}  and the computations on pages 76-77 in \cite{Faraut-Koranyi}  the volume measure \eqref{eq:volume-detB} admits the following polar integration formula:
\begin{align}\label{Polar integration}
\int_\Omega f(z) \, d\mu(z)
=
c_\Omega 
\int_K 
\int_{[0,1]^r} 
f\!\left(k \cdot \sum_{j=1}^r \sqrt{x_j}\, e_j\right)
\, d\nu_{\gamma,a,b}(x)\, dk,
\end{align}
where $dk$ is the normalized Haar measure on $K$, and $\nu_{\gamma,a,b}$ is the \emph{Selberg measure} on $[0,1]^r$ given by
\begin{align}\label{selberg measure}
d\nu_{\gamma,a,b} (x)
=
\prod_{j=1}^r (1-x_j)^{-\gamma}
\prod_{j=1}^r x_j^b
\prod_{1 \le i < j \le r} |x_i-x_j|^a
\; dx_1\cdots dx_r.
\end{align}
Note that it is apparent from the polar integration formula that $\mathrm{reg}(\Omega)$ is a full measure subset of $\Omega$ since the sets of  the form $\{\lambda_i=\lambda_j\}$ and $\{\lambda_i=0 \}$ are contained in hyperplanes of $\mathbb{R}^r$.
The constant $c_\Omega$ is
\[
c_\Omega
=
\pi^n 
\frac{\Gamma\!\left( \frac{a}{2}+1\right)^r}
{\Gamma_\Omega\!\left( \frac{ra}{2}+1\right)
 \Gamma_\Omega\!\left( \frac{n}{r}\right)},
\]
where
\[
\Gamma_\Omega (\nu)
:=
\prod_{j=1}^r 
\Gamma \left( \nu-\frac{j-1}{2}a\right)
\]
is the \emph{Gindikin--Koecher Gamma function}.
\subsection{Brownian motion on $\Omega$}

\begin{theorem}\label{Theorem EDS}
Let $(\beta(t))_{t \ge 0}$ be a complex Brownian motion on $\left(V,\bigl(\cdot \,\big|\,\cdot \bigr)\right) $, i.e. $\beta(t)=\sum_{j=1}^n \beta_j(t) \mathrm{f}_j $ where $(\mathrm{f}_j)_{1 \le j \le n} $ is an orthonormal basis of $\left(V,\bigl(\cdot \,\big|\,\cdot \bigr)\right) $ and the $(\beta_j(t))_{t \ge 0}$'s are  complex Brownian motions. Then, for every $z \in \Omega$ the following stochastic differential equation in the It\^o sense
\begin{align}\label{EDSbrown}
dX(t)= B(X(t),X(t))^{1/2} \; d\beta (t), \qquad X(0)=z \in \Omega
\end{align}
has a unique strong solution which is a Brownian motion on $\Omega$ equipped with the Bergman metric.
\end{theorem}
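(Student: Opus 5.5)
Since $(\beta(t))$ is complex Brownian motion with respect to the trace form, the generator of any solution of \eqref{EDSbrown} is computed directly from the covariation structure. Writing $\sigma(z)=B(z,z)^{1/2}$ (positive square root with respect to $(\cdot|\cdot)$), in an orthonormal basis $(\mathrm f_j)$ with coordinates $z_j$ one has $d\langle X_i,X_j\rangle=0$ and $d\langle X_i,\bar X_j\rangle(t)=c\,(\sigma\sigma^*)_{ij}(X(t))\,dt=c\,B(X(t),X(t))_{ij}\,dt$, where $c$ is the normalisation constant of a complex Brownian motion ($c=2$ if each $\beta_j=\beta^1_j+\ic\beta^2_j$ has independent standard real parts). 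There is no drift, so Itô's formula gives generator
\[
L=c\sum_{i,j}B(z,z)_{ij}\frac{\partial^2}{\partial z_i\partial\bar z_j}.
\]
By \eqref{eq:bergman-metric-jordan} the metric matrix is $h_{i\bar j}=(B^{-1})_{\cdot}$, so $B(z,z)$ is exactly the inverse matrix $(h^{i\bar j})$ up to transposition conventions. Comparing with \eqref{fomula-laplacian}, $L=\frac12\Delta$ provided $c=2$. This fixes the normalisation, and the key point is that the Kähler property removes first-order terms. I would double-check the transposition/conjugation conventions, using that $B$ is Hermitian, so that $\sum B_{ij}\partial_i\bar\partial_j$ matches $\sum h^{i\bar j}\partial_i\bar\partial_j$.

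Next come existence and uniqueness. The map $z\mapsto B(z,z)$ is polynomial and takes values in positive definite operators on $\Omega$. The square root is smooth (real-analytic) on positive definite matrices, so $\sigma$ is smooth, hence locally Lipschitz on $\Omega$. Standard theory yields a unique strong solution up to the exit time $\zeta$ from $\Omega$, that is, up to the time $X$ leaves every compact subset of $\Omega$. On $[0,\zeta)$ this solution is a diffusion with generator $\frac12\Delta$, so it is a Bergman Brownian motion up to its lifetime.

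The main obstacle is showing $\zeta=\infty$ a.s., i.e. stochastic completeness / non-explosion. I would invoke that $(\Omega,g)$ is a Riemannian symmetric space, hence complete with Ricci curvature bounded below (it is Kähler–Einstein with negative constant). By Yau's criterion (e.g. \cite{Hsu2002}), Brownian motion is then non-explosive. Alternatively, I would use a Lyapunov function directly: take $\Phi=-\gamma\log N(z,z)$, which tends to $+\infty$ at $\partial\Omega$. Since the metric is Kähler–Einstein with $\omega=\ic\partial\bar\partial\Phi$, the trace of $\omega$ with respect to itself gives $\Delta\Phi=4\sum h^{i\bar j}\partial_i\bar\partial_j\Phi=4n$ (up to the normalisation). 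Hence $\Phi(X(t\wedge\zeta_m))-2nt$ is a local martingale. Localising and applying Fatou then shows $\Phi(X)$ cannot reach infinity in finite time, so $\zeta=\infty$. The identity $\Delta\Phi=$ const is the cleanest route, and I would check it via $h_{i\bar j}=\partial_i\bar\partial_j\Phi$, which is immediate from $\omega=\ic\partial\bar\partial\Phi$. Finally, uniqueness in law of diffusions with generator $\frac12\Delta$ identifies the solution as the Brownian motion on $\Omega$.
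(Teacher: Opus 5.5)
Your proposal is correct and follows the paper's proof in structure. Both identify the generator as $\frac12\Delta$ by matching the covariation $d\langle X_i,\bar X_j\rangle=2B_{ij}\,dt$ with $(h^{i\bar j})=B(z,z)$, obtain a unique strong solution up to the exit time from local Lipschitzness of $B(z,z)^{1/2}$, and rule out explosion via completeness, the Einstein property (Ricci bounded below) and Yau's theorem.

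Your alternative non-explosion argument is also valid and self-contained. It uses the Lyapunov function $\Phi\ge 0$, which is an exhaustion of $\Omega$ with $\frac12\Delta\Phi$ constant. For this step you do not need the Einstein property, only that $\Phi$ is a global K\"ahler potential for the metric itself, so it replaces the appeal to Yau's theorem.
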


\begin{proof}
First note that by definition $B(z,z)$ can be written in any basis as a matrix whose entries are polynomials in the variables $(z_i,\bar{z}_j:1\le i,j\le n )$ and that since $B(z,z)$ is invertible \cite[2.11.2]{Loos1977} its eigenvalues are positive for every $z\in \Omega$ so that $B(z,z)^{1/2}$ is well defined.
By local Lipschitzness of $z \to B(z,z)^{1/2}$ in $\Omega$, the stochastic differential equation \eqref{EDSbrown} has a unique solution $(X(t))_{t\ge 0}$ defined up to the exit time $\tau$ of $\Omega$. Since the Bergman metric is given by \eqref{eq:bergman-metric-jordan} we deduce from \eqref{fomula-laplacian} that $(X(t))_{t\ge 0}$ is a diffusion process with generator $\frac{1}{2} \Delta$ where $\Delta$ is the Laplace-Beltrami operator of the Bergman metric. From Lemma~6.4 and the remark thereafter in \cite{Satake1980_BoundedSymmetricDomains}, $(M,g)$ is an  \emph{Einstein} manifold, hence it has  a uniform Ricci curvature lower bound. Moreover as a homogeneous Riemannian manifold it is also complete. As a consequence, the Bergman metric is stochastically complete, we refer to \cite{Yau1978}. Therefore, $\tau=+\infty$ almost surely, which concludes the proof of the theorem.
\end{proof}

\subsection{Spectral values of the Brownian motion}

Our  goal in this section is to prove that the spectral values of a Brownian motion on $\Omega$ are a diffusion process whose generator is, up to change of variable,  a radial Heckman-Opdam process associated with a root system of type $BC_r$ explicitly determined by the constants $a,b,r$ in the Peirce decomposition. We will need first  the following algebraic lemma.

\begin{lemma}\label{orthogonality}
Let $(e_1,\cdots,e_r)$ be a Jordan frame. Let $\phi$ be a linear map from $(V, (\cdot |\cdot))$ into itself satisfying, for every $x,y,z \in V$,
\begin{align}\label{triple derivation}
\phi( \{ x \, y \, z \})=\{ \phi(x) \, y \, z \}+\{ x \, \phi(y) \, z \}+\{ x \, y \, \phi(z) \}.
\end{align}
Then for every $1\le i \neq j \le r$, one has $(\phi(e_i) | e_j)=0$.
\end{lemma}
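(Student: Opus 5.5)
We want $(\phi(e_i)|e_j)=0$ for $i\neq j$ whenever $\phi$ is a triple derivation. The plan is to apply the derivation identity \eqref{triple derivation} to well-chosen triples from the Jordan frame. Then we read off Peirce components using the joint Peirce decomposition and its orthogonality for $(\cdot|\cdot)$.

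The basic relations are $\{e_i\,e_i\,e_i\}=2e_i$, and $D(e_j,e_j)e_i=0$ for $j\neq i$. The latter holds because $e_i\in V_{ii}$, so $D(e_j,e_j)e_i=(\delta_{ij}+\delta_{ij})e_i=0$. Orthogonality $D(e_i,e_j)=0$ also gives $\{e_i\,e_j\,x\}=0$ for all $x$.

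First apply \eqref{triple derivation} to $(x,y,z)=(e_j,e_j,e_i)$ with $i\neq j$. The left side is $\phi(0)=0$, so
\[
0=\{\phi(e_j)\,e_j\,e_i\}+\{e_j\,\phi(e_j)\,e_i\}+D(e_j,e_j)\phi(e_i).
\]
The first term is $D(e_i,e_j)\phi(e_j)=0$, using the symmetry of the triple product in its outer variables together with $D(e_i,e_j)=0$. Hence
\[
D(e_j,e_j)\phi(e_i)=-\{e_j\,\phi(e_j)\,e_i\}.
\]

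Next, pair both sides with $e_j$. Since $D(e_j,e_j)$ is self-adjoint for the trace form and $D(e_j,e_j)e_j=2e_j$, the left side gives $(D(e_j,e_j)\phi(e_i)|e_j)=2(\phi(e_i)|e_j)$. On the right we need $(\{e_j\,\phi(e_j)\,e_i\}|e_j)$. Here we use the associativity property of the trace form, $(\{x\,y\,z\}|w)=(z|\{y\,x\,w\})$. This gives
\[
(\{e_j\,\phi(e_j)\,e_i\}|e_j)=(e_i|\{\phi(e_j)\,e_j\,e_j\}).
\]
By outer symmetry, $\{\phi(e_j)\,e_j\,e_j\}=D(e_j,e_j)\phi(e_j)$.

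Decompose $\phi(e_j)$ into joint Peirce components. The operator $D(e_j,e_j)$ acts by a scalar on each $V_{kl}$ and preserves it. So the $V_{ii}$-component of $D(e_j,e_j)\phi(e_j)$ is $(\delta_{ij}+\delta_{ij})=0$ times the $V_{ii}$-component of $\phi(e_j)$. By orthogonality of the Peirce decomposition, $(e_i|D(e_j,e_j)\phi(e_j))=0$. Therefore $2(\phi(e_i)|e_j)=0$, which is the claim.

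The main obstacle is the associativity identity $(\{x\,y\,z\}|w)=(z|\{y\,x\,w\})$ for the trace form. I would take it from the standard theory: it follows from the Jordan triple identity, via $\operatorname{tr}D(\{x\,y\,z\},w)=\operatorname{tr}D(z,\{y\,x\,w\})$, see Loos \cite{Loos1977}. As an alternative, one can bypass it. Pair the displayed identity directly with $e_j$ using self-adjointness of $D(e_j,e_j)$, i.e.\ $D(u,v)^*=D(v,u)$ for a positive Hermitian Jordan triple. Both routes rest on this standard adjointness property, which I would cite rather than reprove.
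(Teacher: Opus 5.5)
Your argument is correct, but it takes a different route from the paper's.

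\textbf{The paper's route.} The paper applies the derivation identity to the single relation $\{e_i\,e_i\,e_i\}=2e_i$, which gives $\phi(e_i)=D(e_i,e_i)\phi(e_i)+Q(e_i)\phi(e_i)$. It then uses the Peirce multiplication rule $\{V_\alpha(e)\,V_\beta(e)\,V_\gamma(e)\}\subset V_{\alpha-\beta+\gamma}(e)$, so that $Q(e_i)$ kills $V_0(e_i)\oplus V_1(e_i)$. Comparing $V_0(e_i)$-components shows that $\phi(e_i)$ has no component in $V_0(e_i)$, and the claim follows since $e_j\in V_0(e_i)$.

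\textbf{Your route.} You apply the identity to the vanishing triple $\{e_j\,e_j\,e_i\}=0$ and kill one term with $D(e_i,e_j)=0$. You then dispose of the mixed term $\{e_j\,\phi(e_j)\,e_i\}$ by pairing with $e_j$ and using the associativity $(\{x\,y\,z\}|w)=(z|\{y\,x\,w\})$ of the trace form.

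\textbf{Comparison.} Your proof trades the Peirce multiplication rules for the adjointness $D(x,y)^*=D(y,x)$. You genuinely need it only for the mixed operator $D(e_j,\phi(e_j))$. The pairing of $D(e_j,e_j)\phi(e_i)$ with $e_j$ already follows from the orthogonality of the joint Peirce decomposition, since $D(e_j,e_j)$ acts by scalars on each $V_{kl}$. The paper's route gives the slightly stronger fact $\phi(e)\perp V_0(e)$ for every tripotent $e$, which contains all the cross-orthogonality relations at once. Yours proves exactly the scalar identity required. Both rely on one cited standard fact, and either conclusion suffices for how the lemma is used in Step 1 of Theorem~\ref{spectral diffusion} and in Lemma~\ref{grad lambda}.
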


\begin{proof}
Let $e$ be a tripotent. Using the identity
\[
2e=\{ e \, e \, e \},
\]
and applying $\phi$ to both sides, relation \eqref{triple derivation} gives
\begin{align*}
2 \phi(e)
&=\{ \phi(e) \, e \, e \}+\{ e \, \phi(e) \, e \}+\{ e \, e \, \phi(e) \} \\
&=2D(e,e)(\phi(e))+2Q(e)(\phi(e)).
\end{align*}

Write $\phi(e)=x_0+x_1+x_2$ according to the Peirce decomposition
\[
V=V_0(e)\oplus V_1(e) \oplus V_2(e).
\]
Since $D(e,e)$ acts on $V_\alpha(e)$ by multiplication by $\alpha$, we obtain
\[
D(e,e)(\phi(e))=x_1+2x_2.
\]

Moreover, by Proposition V.2.1 in \cite[Part V]{Book_complexdomains},
\[
\{ V_\alpha (e) \, V_\beta (e) \, V_\gamma (e) \} \subset V_{\alpha-\beta +\gamma}(e),
\]
with the convention that $V_\alpha (e)=0$ if $\alpha \notin \{0,1,2\}$. It follows that
\[
Q(e)(x_0)=0, \qquad Q(e)(x_1)=0, \qquad Q(e)(x_2) \in V_2(e).
\]
Hence $Q(e)(\phi(e))=Q(e)(x_2) \in V_2(e)$. Substituting into the previous identity yields
\[
2(x_0+x_1+x_2)=2(x_1+2 x_2)+2 Q(e)(x_2).
\]
Comparing Peirce components, we deduce that $x_0=0$. Therefore $\phi(e)$ is orthogonal to $V_0(e)$. Finally, if $(e_1,\dots,e_r)$ is a Jordan frame and $i \neq j$, then $e_j \in V_0(e_i)$, since
\[
D(e_i,e_i)(e_j)=D(e_j,e_i)(e_i)=0.
\]
Thus $(\phi(e_i)\mid e_j)=0$, which completes the proof.
\end{proof}
As we will consider a transformation of the spectral values $\lambda_i(z),i=1,\dots,r$ by an element-wise application of $\arctanh$, we introduce the sets
\[
\mathcal{C}=\left\{ \tau \in \mathbb{R}^r : 0< \tau_r < \cdots < \tau_1  \right\},
\qquad
\bar{\mathcal C}=\left\{ \tau \in \mathbb{R}^r : 0\le \tau_r \le \cdots \le  \tau_1  \right\}.
\]
We consider the diffusion operator on $\bar{\mathcal C}$ given by
\begin{equation}
\label{radial generator}
\begin{aligned}
\mathcal{L}_{k_1,k_2,k_3} = \sum_{i=1}^r \frac{\partial^2}{\partial \tau_i^2}
 + \sum_{i=1}^r& \Bigg( k_3 \coth(\tau_i) + 2k_2 \coth(2\tau_i)\\&+k_1 \sum_{j\neq i} \bigl( \coth(\tau_i - \tau_j) + \coth(\tau_i + \tau_j) \bigr) \Bigg) \frac{\partial}{\partial \tau_i}
\end{aligned}
\end{equation}
This operator belongs to the family of radial Heckman--Opdam operators.
Throughout this text we will always assume $k_1\ge1$ 
 and $k_2+k_3\ge1$.
We refer to \cite{Schapira_Heckman,Schapira_sharp_estimates} for a thorough study of these operators when $k_i\ge0,i\in\{1,2,3\}$ and to Appendix~\ref{sec:Heckman-Opdam_Laplacians} for details about our setting. 

In particular,  $\frac{1}{2}\mathcal{L}_{k_1,k_2,k_3}$ is the generator of a unique diffusion process on $\bar{\mathcal{C}}$ and  there exists a heat kernel denoted $p_t^{k_1,k_2,k_3}(\tau,\eta)$ for this operator on $\bar{\mathcal{C}}$ with respect to the invariant measure
\begin{align}
dm_{k_1,k_2,k_3}(\tau)
&=\;
 \prod_{i=1}^r |\sinh \tau_i|^{k_3}\,|\sinh 2\tau_i|^{k_2}\;
 \prod_{1\le i<j\le r}\Bigl|\sinh(\tau_i-\tau_j)\,\sinh(\tau_i+\tau_j)\Bigr|^{k_1} d\tau
\label{eq:mu-main}.
\end{align}
When all multiplicities are non-negative, some formulas are available \cite{rosler1998, Schapira_sharp_estimates, Schapira_Heckman} for the heat kernel $p_t^{k_1,k_2,k_3}(\tau,\eta)$ of $\frac{1}{2}\mathcal{L}_{k_1,k_2,k_3}$, relying on the theory  \cite{Heckman1987,HeckmanSchlichtkrull1994,Opdam1995,OshimaShimeno2010} of Heckman--Opdam hypergeometric functions.
In Appendix~\ref{sec:Heckman-Opdam_Laplacians}, we extend them to the setting where $k_1\ge1$, \, $k_2\ge0$ and $k_2+k_3\ge1$ using recent results from \cite{honda2024inversion}.

\begin{theorem}\label{spectral diffusion}

Let $(X(t))_{t \ge 0}$ be a Brownian motion on $\Omega$ started from $z\in \Omega$ and let $\lambda_i(t)$, $1 \le i \le r$, be the spectral values of $X(t)$. Then, for every $t >0$, $\mathbb{P}(X(t) \in \mathrm{reg}(\Omega))=1$ and  the process
\[
\tau(t)=(\arctanh(\lambda_1(t)),\ldots,\arctanh(\lambda_r(t))), \quad t \ge 0,
\]
is a diffusion process with generator $\frac{1}{2}\mathcal{L}_{a,1,2 b}$,  a semimartingale and the unique strong solution of a stochastic differential equation
\begin{align}\label{SDE tau}
d\tau_i(t)
=
d\beta_i(t)
+
\left(
b\coth(\tau_i(t))
+\coth(2\tau_i(t))
+\frac{a}{2}\sum_{j \neq i}
\bigl(\coth(\tau_i(t)-\tau_j(t))+\coth(\tau_i(t)+\tau_j(t))\bigr)
\right)dt,
\end{align}
where $\beta(t)$ is a Brownian motion in $\mathbb{R}^r$.

\end{theorem}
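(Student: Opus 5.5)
The approach is to apply Itô's formula to the functions $\tau_i=\arctanh\circ\lambda_i$, which are smooth on $\mathrm{reg}(\Omega)$, along the SDE \eqref{EDSbrown}. We compute the first and second derivatives of $\lambda_i$ at a point $z=k\cdot e$, where $e=\sum_j\lambda_j e_j$. By $K$-invariance of $\lambda_i$ and $K$-equivariance of $B(z,z)$ (so that $k^{-1}$ maps Brownian motion to Brownian motion), it suffices to work at $z=e$.

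\textbf{First-order variation.} Decompose a tangent vector $v\in V$ along the joint Peirce decomposition. Tangent vectors of the form $\phi(e)$, where $\phi$ is a triple derivation (the Lie algebra of $K$), do not change the spectral values. Lemma~\ref{orthogonality} shows that such $\phi(e)$ has no component along $e_j$ in a suitable sense, and combined with dimension counting, the $K$-orbit directions span everything except $\bigoplus_i \mathbb{R}e_i$. The directions $e_i$ move $\lambda_i$ linearly. Hence $d\lambda_i(e)v=\operatorname{Re}(v|e_i)/(e_i|e_i)$, and $(e_i|e_i)=\gamma$ because $(z|z)=\gamma\sum\lambda_j^2$. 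Since $B(e,e)$ acts on $V_{ii}$ by $(1-\lambda_i^2)^2$, the martingale part of $d\lambda_i$ is $(1-\lambda_i^2)\,d\beta_i$ after normalization, with $\beta_i$ orthogonal across $i$. Composing with $\arctanh$, whose derivative is $(1-\lambda^2)^{-1}$, gives a unit diffusion coefficient for $\tau_i$. This identifies the martingale part up to the constant normalization of the metric; I expect the factor $\gamma$ in the trace form to be absorbed so that the coefficient is exactly $1$, and I will adjust by checking against the rank-one disc.

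\textbf{Drift.} Rather than computing second derivatives of $\lambda_i$ directly, I will use that $X$ has generator $\frac12\Delta$. For a $K$-invariant function $f(z)=F(\tau(z))$, $\Delta f$ is $K$-invariant, and its radial part is determined by two facts. First, it has the principal symbol just computed. Second, it is symmetric with respect to the pushforward of $d\mu$ under $z\mapsto\tau(z)$. From the polar integration formula \eqref{Polar integration} with $x_j=\tanh^2\tau_j$, this pushforward has a density proportional to
\[
\prod_j(1-x_j)^{-\gamma}x_j^{b}\prod_{i<j}|x_i-x_j|^a\,\prod_j\frac{dx_j}{d\tau_j}.
\]
Here $dx_j/d\tau_j=2\tanh\tau_j\cosh^{-2}\tau_j$, and $x_i-x_j=\sinh(\tau_i-\tau_j)\sinh(\tau_i+\tau_j)/(\cosh^2\tau_i\cosh^2\tau_j)$. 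The powers of $\cosh$ should then cancel, using $\gamma=(r-1)a+b+2$. The result is $\prod_j\sinh^{2b}\tau_j\sinh(2\tau_j)\prod_{i<j}|\sinh(\tau_i-\tau_j)\sinh(\tau_i+\tau_j)|^a$, which is exactly $m_{a,1,2b}$. A second-order operator with identity principal part that is symmetric for this density must be $\mathcal L_{a,1,2b}$. This symmetry argument requires Green's formula on $\Omega$ for compactly supported $K$-invariant functions in $\mathrm{reg}(\Omega)$, which follows from \eqref{Polar integration}. It gives the drift in \eqref{SDE tau} on $\mathrm{reg}(\Omega)$.

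\textbf{Global statements.} The remaining issue is singular points. Because the drift repulsion parameters are $k_1=a\ge1$ and $k_2+k_3=1+2b\ge1$, the Heckman--Opdam diffusion started in $\bar{\mathcal C}$ immediately enters $\mathcal C$ and never hits the walls. This follows from the Appendix and the standard Bessel-type comparison, in which $\coth(\tau_i-\tau_j)$ and $\coth\tau_r$ or $\coth 2\tau_r$ behave like $1/x$ with coefficients at least $1/2$. Hence $\mathbb P(X(t)\in\mathrm{reg}(\Omega))=1$ for $t>0$. For this I will argue via the polar integration formula that $X(t)$ has a density, so $\mathrm{reg}(\Omega)$, being of full measure, is hit almost surely. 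Strong uniqueness of \eqref{SDE tau} on $\mathcal C$ follows from the local Lipschitz drift and non-explosion. The Itô computation is valid up to exit times from $\mathrm{reg}(\Omega)$, which are infinite after any time $t_0>0$, and letting $t_0\to0$ uses continuity of $\lambda$.

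I expect the main obstacle to be this last step: justifying that $X$ never returns to $\Omega\setminus\mathrm{reg}(\Omega)$ and that the starting point may be singular.
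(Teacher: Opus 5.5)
Your route is essentially the paper's: the martingale part comes from the $\bigoplus_i\mathbb{R}e_i$-component via Lemma~\ref{orthogonality}, and the drift from symmetry with respect to the push-forward of the Selberg measure. Your $\cosh$-cancellation via $\gamma=(r-1)a+b+2$ is exactly how $J_{a,b}$ arises. Non-hitting comes from $a\ge1$ and $1+2b\ge1$, and singular starts are handled by the Markov property.

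Two of your local choices are lighter than the paper's:
\begin{itemize}
\item You read off $d\lambda_i(e)v=\operatorname{Re}(v|e_i)/(e_i|e_i)$ from $\mathfrak{k}\cdot e=(\bigoplus_i\mathbb{R}e_i)^{\perp}$ (orthogonality plus a dimension count) and apply It\^o's formula to $\lambda_i\circ X$. This avoids building the $K$-valued semimartingale $k(t)$ by horizontal lift.
\item You use absolute continuity of the law of $X(t_0)$ to get $\mathbb{P}(X(t_0)\in\mathrm{reg}(\Omega))=1$ directly. This replaces the Whitney-stratification and immediate-entrance argument.
\end{itemize}

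The step that fails is the one you hedged on: nothing absorbs the factor $\gamma$. With the trace form $(x|y)=\tr D(x,y)$ fixed in Section~3, one has $(e_i|e_i)=\gamma$. At $X=k\cdot e$, the martingale part of $d\lambda_i$ is $\gamma^{-1}(1-\lambda_i^2)\operatorname{Re}(d\beta\,|\,k\cdot e_i)$. Its quadratic variation is $\gamma^{-1}(1-\lambda_i^2)^2\,dt$, because $\operatorname{Re}(k\cdot e_i|k\cdot e_i)=\gamma$. Hence $d\langle\tau_i,\tau_j\rangle=\gamma^{-1}\delta_{ij}\,dt$.

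This agrees with Lemma~\ref{grad lambda}, which gives $\|\nabla\lambda_i\|_g^2=\gamma^{-1}(1-\lambda_i^2)^2$. Your disc check will confirm it rather than the statement. For $r=1$, $\gamma=2$, the metric is $2|dz|^2/(1-|z|^2)^2$ and the distance to $0$ is $\sqrt{2}\,\tau$, so $\langle\tau\rangle_t=t/2$.

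Carried through, your symmetry argument yields the generator $\frac{1}{2\gamma}\mathcal{L}_{a,1,2b}$, i.e.\ \eqref{SDE tau} holds for $\tau(\gamma t)$. This time change does not affect the regularity or non-hitting claims. The statement holds verbatim if the metric is built from the normalized form $\gamma^{-1}\tr D$, for which primitive tripotents have unit length.

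The paper's Step~1 reaches $d\langle\lambda_i,\lambda_j\rangle=\delta_{ij}(1-\lambda_i^2)(1-\lambda_j^2)\,dt$ only by tacitly treating the $e_i$-coordinate of $W_{ii}$ as a standard complex Brownian motion. That amounts to using the normalized form, which conflicts with $(z|z)=\gamma\sum_j\lambda_j^2$. So fix the normalization explicitly rather than expect the constant to cancel.

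Separately, for $z\notin\mathrm{reg}(\Omega)$, local Lipschitz continuity and continuity of $\lambda$ do not show that $\tau$ solves \eqref{SDE tau} from time $0$, nor that the solution is unique. The drift is singular on $\partial\mathcal{C}$. You need integrability of the drift near $t=0$ and pathwise uniqueness from boundary points; Lemma~\ref{sol SDE} supplies the pathwise uniqueness.
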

\begin{proof}
Let $(X(t))_{t \ge 0}$ be a Brownian motion on $\Omega$ started from $z$, where we first assume that $$z \in \mathrm{reg} (\Omega)=\left\{  z \in \Omega : 1> \lambda_1(z) > \cdots > \lambda_r(z) >0 \right\},$$ and let $\lambda_i(t)$, $1 \le i \le r$, be the spectral values of $X(t)$.  We define the exit time
\begin{align*}\label{exit time alcove}
S_{\mathcal C}&=\inf \left\{ t \ge 0 :  X(t) \notin \mathrm{reg} (\Omega)  \right\} \\
&= \inf \left\{ t \ge 0 :  \tau(t)=(\arctanh(\lambda_1(t)),\ldots,\arctanh(\lambda_r(t))) \notin \mathcal{C} \right\}.
\end{align*}
From the construction of the polar decomposition, the map $\lambda: z \mapsto (\lambda_1(z),\cdots,\lambda_r(z))$ is smooth on $\mathrm{reg} (\Omega)$,  therefore the process 
\[
e(t)=\sum_{i=1}^r \lambda_i(t)\, e_i , \; t < S_{\mathcal C},
\]
is a semimartingale. We then claim that there exists a  $K$-valued semimartingale $k(t)$ such that
\[
X(t)=k(t)\cdot e(t), \; t < S_{\mathcal C}.
\]
Indeed, let $M$ be the set of primitive tripotents in $V$ and consider the set of Jordan frames
\[
\mathcal{T}=\left\{ (c_1,\cdots,c_r) \in M^r : D(c_i,c_j)=0, i\neq j \right\}
\]
Then, $\mathcal{T}$ is a compact submanifold of $V^r$ and the map $\mathcal{T} \times \left\{ \lambda \in \mathbb{R}^r:  1>  \lambda_1 > \cdots > \lambda_r >0 \right\} \to \mathrm{reg} (\Omega)$,
\[
((c_1,\cdots,c_r),(\lambda_1,\cdots,\lambda_r)) \to \sum_{i=1}^r \lambda_i \cdot c_i
\]
is a diffeomorphism. Therefore, there exists a $\mathcal{T}$-valued semimartingale $c(t)$ such that
\[
X(t)=\sum_{i=1}^r \lambda_i(t) c_i(t), \; t < S_{\mathcal C}.
\]
Then, the group $K$ acts transitively on $\mathcal{T}$ and the map $\pi:K \to \mathcal{T}, k\to (k\cdot e_1,\cdots,k\cdot e_r)$ is a submersion so we can consider a horizontal lift $k(t)$ of $c(t)$, $t < S_{\mathcal C} $, by $\pi$. The process $k(t)$ is a semimartingale and such that
\[
X(t)=\sum_{i=1}^r \lambda_i(t) k(t)\cdot e_i=k(t) \cdot e(t).
\]

Then, because $K$ acts isometrically on the Riemannian structure of $\Omega$ its action commutes with the Laplacian $\Delta$ and there exists therefore a diffusion operator $\mathcal{D}$ acting on the space of smooth functions $f:\left\{ \lambda \in \mathbb{R}^r:  1>  \lambda_1 > \cdots > \lambda_r >0 \right\} \to \mathbb{R}$  such that
\begin{align}\label{intertwining}
\frac{1}{2}\Delta (f \circ \lambda)=(\mathcal{D} f) \circ \lambda.
\end{align}

From the intertwining \eqref{intertwining}, the process $\lambda(t)$ is a diffusion process with generator $\mathcal{D}$. 

From there, the proof is now  divided into four steps. In Step~1 we compute the quadratic covariations $\langle \tau_i,\tau_j\rangle$, which determine the second-order part of the generator $\mathcal{D}$. In Step~2 we use the polar integration formula \eqref{Polar integration} to compute the drift term of $\mathcal{D}$. In the first two steps we work up to time $S_{\mathcal C}$; in Step~3 we prove that $S_{\mathcal C}=+\infty$ almost surely. Finally, in Step~4 we remove the condition $X(0)=z \in \mathrm{reg}(\Omega)$.

\paragraph{Step 1: Quadratic variation of $\tau$.}

Since $e(t)=k(t)^{-1}\cdot X(t)$, using the Stratonovich differential notation $\circ d$ we obtain
\[
\circ de(t)
=
\circ dk(t)^{-1}\cdot X(t)
+
k(t)^{-1}\cdot \circ dX(t).
\]
From
\[
0=\circ d\bigl(k(t)k(t)^{-1}\bigr)
=
k(t)\circ dk(t)^{-1}
+
\circ dk(t)\,k(t)^{-1},
\]
we deduce
\[
\circ dk(t)^{-1}
=
-\,k(t)^{-1}\circ dk(t)\,k(t)^{-1}.
\]
Hence
\begin{align*}
\circ de(t)
&=
-\,k(t)^{-1}\circ dk(t)\,k(t)^{-1}\cdot X(t)
+
k(t)^{-1}\cdot \circ dX(t) \\
&=
-\,k(t)^{-1}\circ dk(t)\cdot e(t)
+
k(t)^{-1}\cdot dX(t)
+
\text{It\^o correction terms} \\
&=
-\,k(t)^{-1}\circ dk(t)\cdot e(t)
+
k(t)^{-1}\cdot B(X(t),X(t))^{1/2} d\beta(t)
+
\text{It\^o correction terms} \\
&=
-\,k(t)^{-1}\circ dk(t)\cdot e(t)
+
B(e(t),e(t))^{1/2}\,k(t)^{-1}\cdot d\beta(t)
+
\text{It\^o correction terms},
\end{align*}
where we used the $K$-equivariance property of the Bergman kernel
\[
B(k\cdot z,k\cdot z)=kB(z,z)k^{-1}.
\]

Since $k(t)\in K$, the term $k(t)^{-1}\circ dk(t)$ takes values in the Lie algebra $\mathfrak{k}$ of $K$. The group $K$ acts linearly and $(\cdot|\cdot)$-isometrically on $V$ by triple automorphisms:
\[
k\cdot\{x\,y\,z\}
=
\{k\cdot x\,k\cdot y\,k\cdot z\},
\qquad
(k\cdot x|k\cdot y)=(x|y).
\]
Thus every $v \in \mathfrak{k}$ acts as a $(\cdot|\cdot)$-skew-symmetric triple derivation:
\[
v\cdot\{x\,y\,z\}
=
\{v\cdot x\,y\,z\}
+
\{x\,v\cdot y\,z\}
+
\{x\,y\,v\cdot z\}.
\]

By Lemma~\ref{orthogonality}, for $i\neq j$,
\[
\bigl(k(t)^{-1}\circ dk(t)\cdot e_i\,\big|\,e_j\bigr)=0,
\]
while for $i=j$ skew-symmetry implies
\[
\mathrm{Re}\,\bigl(k(t)^{-1}\circ dk(t)\cdot e_i\,\big|\,e_i\bigr)=0.
\]
Therefore, relative to the joint Peirce decomposition
\[
V=\bigoplus_{0\le i\le j\le r} V_{ij},
\]
we have
\[
k(t)^{-1}\circ dk(t)\cdot e(t)
\in
\bigoplus_{1\le i\le r} i\mathbb{R}e_i
\;\oplus\!
\bigoplus_{0\le i<j\le r} V_{ij}.
\]
Since
\[
\circ de(t)\in \bigoplus_{1\le i\le r}\mathbb{R}e_i,
\]
the martingale part (and hence the quadratic variation) of $e(t)$ arises solely from the projection of
\[
B(e(t),e(t))^{1/2}k(t)^{-1}\cdot d\beta(t)
\]
onto $\bigoplus_{1\le i\le r}\mathbb{R}e_i$. Because $K$ acts isometrically on $(V,(\cdot|\cdot))$, $k(t)^{-1}\cdot d\beta(t)=dW(t)$ for a Brownian motion $W$ on $(V,(\cdot|\cdot))$. Decomposing
\[
W(t)=\sum_{0\le i\le j\le r} W_{ij}(t),
\]
and using that $B(e(t),e(t))$ preserves Peirce spaces, the projection of $B(e(t),e(t))^{1/2} dW(t)$ onto $\bigoplus_{1\le i\le r}\mathbb{R}e_i$ is the real part of
\[
\sum_{i=1}^r (1-\lambda_i(t)^2)\, dW_{ii}(t).
\]
Since $\dim_{\mathbb{C}} V_{ii}=1$, we obtain
\[
d\langle \lambda_i,\lambda_j\rangle(t)
=
\delta_{ij}(1-\lambda_i(t)^2)(1-\lambda_j(t)^2)\,dt.
\]
Consequently,
\[
d\langle \tau_i,\tau_j\rangle(t)
=
\frac{1}{(1-\lambda_i(t)^2)(1-\lambda_j(t)^2)}
\,d\langle \lambda_i,\lambda_j\rangle(t)
=
\delta_{ij}\,dt.
\]

\paragraph{Step 2: Computation of the first-order part.}

From Step~1, $\tau(t)$ is a Markov process whose generator  has a second order part given by
\[
\frac{1}{2}\sum_{i=1}^r \frac{\partial^2}{\partial \tau_i^2}.
\]
Using the polar integration formula \eqref{Polar integration}, and the fact that $\Delta$ commutes with the action of $K$ one sees that the invariant and symmetrizing measure for the generator of $\tau(t)$ is given by  the push-forward of the Selberg measure \eqref{selberg measure} under the change of variable $\tau_i=\arctanh(\sqrt{x_i})$. A direct computation gives that this push-forward is given by
\[
J_{a,b}(\tau)\,d\tau,
\]
with
\[
J_{a,b}(\tau)
=
C
\prod_{j=1}^r \sinh(\tau_j)^{2b}
\prod_{j=1}^r \sinh(2\tau_j)
\prod_{1\le i<j\le r}
|\sinh(\tau_i-\tau_j)\sinh(\tau_i+\tau_j)|^a.
\]
where $C$ is a positive constant. A diffusion operator is completely characterized by its second order part and, when it exists, by an invariant and symmetrizing measure. Using that $\tau_i(t)-\tau_j(t)>0$ for $j>i$ and $t < S_{\mathcal C}$, we conclude that the generator of $\tau(t)$ is

\begin{align*}
\mathcal{D}&= \frac{1}{2} \sum_{i=1}^r \left[ \frac{\partial^2}{\partial \tau_i^2} +\frac{\partial \ln J_{a,b}(\tau)}{\partial \tau_i} \frac{\partial}{\partial \tau_i}\right] \\
&  =\frac{1}{2}\sum_{i=1}^r \left[ \frac{\partial^2}{\partial \tau_i^2}+\left( 2b\coth (\tau_i) +2 \coth(2\tau_i)
+a\sum_{j \neq i}\bigl(\coth(\tau_i-\tau_j)+\coth(\tau_i+\tau_j)\bigr)\right)\frac{\partial}{\partial \tau_i}\right].
\end{align*}

\paragraph{Step 3: $S_{\mathcal C}=+\infty$ almost surely.}

From Steps 1 and 2, up to time $S_{\mathcal C}$, the process $\tau(t)$ satisfies
\[
d\tau_i(t)
=
d\beta_i(t)
+
\left(
b\coth(\tau_i(t))
+\coth(2\tau_i(t))
+\frac{a}{2}\sum_{j\neq i}
\bigl(\coth(\tau_i(t)-\tau_j(t))+\coth(\tau_i(t)+\tau_j(t))\bigr)
\right)dt,
\]
where $\beta(t)$ is a Brownian motion in $\mathbb{R}^r$. This system coincides with the radial Heckman--Opdam  stochastic differential equation of type $BC_r$. By \cite[Proposition 4.1]{Schapira_Heckman}, it admits a unique global strong solution for any initial condition in $\mathcal{C}$, 
and according to the discussion above Proposition~4.2 in \cite{Schapira_Heckman}, $k_\alpha + k_{2\alpha}\ge1/2$ implies that the Heckman-Opdam process started at any point in $\mathcal{C}$ never touches the walls almost surely. 
Indeed, this condition written in our setting is $k_{e_i} + k_{2e_i} \ge1/2$ where $ k_{e_i}=b, k_{2e_i}=1/2$ for the wall $\tau_r=0$ and $k_{e_i-e_j}=a/2 \ge 1/2$ for the walls $\tau_i=\tau_{i+1}$ where we use the order defining $\mathcal{C}$ to label the walls.
As $a\ge1$ holds for any Hermitian positive Jordan triple system---we refer to the classification in IV.5 in Part~V of \cite{Book_complexdomains}---, the solution hence never exits $\mathcal{C}$ and $S_{\mathcal C}=+\infty$ almost surely.

\paragraph{Step 4: Arbitrary initial conditions.}

Let finally $X(t)$ be a Brownian motion on $\Omega$ started from an arbitrary $z \in \Omega$. The complement $\Omega \setminus \mathrm{reg}(\Omega)$ is the zero set within $\Omega$ of the polynomial $P = \Delta \cdot p_r$, where $\Delta$ is the discriminant of the generic minimal polynomial and $p_r = \prod_{j=1}^{r} \lambda_j^2$ is its constant term. Since $P$ is not identically zero, $\{P = 0\} \cap \Omega$ is a proper real algebraic subvariety of $V \cong \mathbb{R}^{2n}$, and by Whitney's theorem \cite[Theorem~2]{Whitney1957}, it is a finite union of smooth submanifolds of   codimension at least $1$. Therefore, since $(X(t))_{t \ge 0}$ is an elliptic diffusion and can not stay confined in a finite union of such  submanifolds for a positive amount of time, we  almost surely have
\[
\inf \left\{  t >0 : X(t) \in \mathrm{reg}(\Omega)  \right\}=0.
\]
From Step 3, the  complement of $\mathrm{reg}(\Omega)$ is a polar set for $X(t)$ started from $X(0)\in\mathrm{reg}(\Omega)$, therefore from the Markov property of $X(t)$,  we can deduce that $X(t)$ always remains in  $\mathrm{reg}(\Omega)$ after time zero. Thus, as in the previous steps, $\tau(t)$ is a diffusion process with generator $\frac{1}{2}\mathcal{L}_{a,1,2 b}$. Finally, by \cite[Proposition 4.1]{Schapira_Heckman}, this process is a semimartingale and the unique strong solution of the stochastic differential equation \eqref{SDE tau}.
\end{proof}

\section{The stochastic symplectic area process}

In this section we introduce and study the stochastic area process on any bounded symmetric domain. Our main results are the central limit result Theorem \ref{CLT area} and the characteristic function formula in Theorem \ref{characteristic function area}.

Let $\Omega$ be the bounded symmetric domain described in the previous section. A global K\"ahler potential is
\[
\Phi(z) =  \log \mathrm{det} B (z,z)^{-1}, \; z \in \Omega .
\]
Up to constant, this K\"ahler potential is canonical and characterized by its $K$-invariance property
\[
\Phi(k \cdot z)=\Phi(z), \; z \in \Omega, k \in K.
\]

\subsection{Definition and first properties}

The  $d^c$-form associated with the canonical K\"ahler potential is given by
\begin{align*}
\alpha=\frac{1}{2\ic}(\partial-\bar\partial)\Phi.
\end{align*}
and the stochastic area process is defined by the Stratonovich line integral
\[
S(t)=\int_{X[0,t]} \alpha
\]
where $X(t)$ is a Brownian motion.

The next lemma that computes the gradient of the spectral values will be used to represent the stochastic area process as a stochastic integral with respect to Brownian motion.

\begin{lemma}\label{grad lambda}
Let $e\in\mathrm{reg}(\Omega)$ be of the form $e=\sum_{i=1}^r \lambda_i e_i$ for fixed Jordan frame $(e_1,\dots,e_r)$. Then the Riemannian gradients of $\lambda_i,i=1,\dots,r$ with respect to the Bergman metric are given by
\[
    \nabla\lambda_i(e)=\frac{1}{\gamma} (1-\lambda_i^2(e))^2 e_i.
\]
\end{lemma}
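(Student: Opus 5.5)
The plan is to compute the differential $d\lambda_i$ at $e$ as a real-linear form on $V$, and then convert it into a gradient using the explicit form \eqref{eq:bergman-metric-jordan} of the Bergman metric together with the block action \eqref{action bergman} of $B(e,e)$. With the sesquilinear convention, the Riemannian metric is $g_z(u,v)=\mathrm{Re}\,(B(z,z)^{-1}u\,|\,v)$. Hence $\nabla\lambda_i(e)$ is characterized by
\[
\mathrm{Re}\,\bigl(B(e,e)^{-1}\nabla\lambda_i(e)\,\big|\,v\bigr)=d\lambda_i(e)(v)\qquad\text{for all } v\in V.
\]
It therefore suffices to show that $d\lambda_i(e)(v)=\frac{1}{\gamma}\mathrm{Re}\,(v\,|\,e_i)$. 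Indeed, this gives $B(e,e)^{-1}\nabla\lambda_i(e)=\frac1\gamma e_i$, and then $\nabla\lambda_i(e)=\frac1\gamma B(e,e)e_i=\frac1\gamma(1-\lambda_i^2)^2e_i$ by \eqref{action bergman}, since $e_i\in V_{ii}$.

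To compute $d\lambda_i(e)$, I will use the local product structure of $\mathrm{reg}(\Omega)$ from the proof of Theorem~\ref{spectral diffusion}. The map $(c,\lambda)\mapsto\sum_j\lambda_jc_j$ from $\mathcal T\times\{1>\lambda_1>\cdots>\lambda_r>0\}$ is a diffeomorphism onto $\mathrm{reg}(\Omega)$, and $K$ acts transitively on $\mathcal T$. Consequently $T_e\mathrm{reg}(\Omega)=V$ splits as a direct sum
\[
V=\bigoplus_{j=1}^r\mathbb R e_j\;\oplus\;\mathfrak k\cdot e .
\]
On the first summand $d\lambda_i(e_j)=\delta_{ij}$. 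On the second summand $d\lambda_i$ vanishes, because the spectral values are $K$-invariant.

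Next I identify $\mathfrak k\cdot e$ with the real $(\cdot|\cdot)$-orthogonal complement of $\bigoplus_j\mathbb R e_j$. This repeats the argument of Step~1. Each $v\in\mathfrak k$ is a skew-symmetric triple derivation, so Lemma~\ref{orthogonality} gives $(v\cdot e_j\,|\,e_k)=0$ for $j\neq k$. Skew-symmetry gives $\mathrm{Re}\,(v\cdot e_j\,|\,e_j)=0$. Together these yield $\mathrm{Re}\,(v\cdot e\,|\,e_k)=0$ for all $k$, so $\mathfrak k\cdot e$ is contained in the real orthogonal complement of $\bigoplus_j\mathbb R e_j$. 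The direct sum decomposition above forces $\dim_{\mathbb R}\mathfrak k\cdot e=2n-r$, which is exactly the dimension of that complement, so the inclusion is an equality.

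Writing $v=\sum_j t_je_j+w$ with $w\in\mathfrak k\cdot e$, I then get $d\lambda_i(e)(v)=t_i$. On the other hand $\mathrm{Re}(v|e_i)=t_i(e_i|e_i)$, because the $e_j$ are mutually orthogonal (they lie in distinct Peirce spaces) and $w$ is real-orthogonal to $e_i$. It remains to check $(e_i|e_i)=\gamma$, which follows from the identity $(z|z)=\gamma\sum_j\lambda_j^2$ applied to $z=e_i$. Combining these gives $d\lambda_i(e)(v)=\frac1\gamma\mathrm{Re}(v|e_i)$, as required.

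The main obstacle is the identification of the orbit directions with the orthogonal complement, which relies on the dimension count coming from the diffeomorphism. A more hands-on fallback is to verify the vanishing of $d\lambda_i$ directly on each piece: on $\mathrm{i}\mathbb Re_j$ one has $|\lambda_j+\mathrm{i}\varepsilon|=\lambda_j+O(\varepsilon^2)$, and on the Peirce spaces $V_{jk}$ with $j<k$ one would use first-order perturbation of the roots of $m(T,z,z)$.
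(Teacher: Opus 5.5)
Your proposal is correct and follows essentially the paper's route. Lemma~\ref{orthogonality} together with skew-symmetry shows that the orbit directions $\mathfrak k\cdot e$ have no component along $\bigoplus_j\mathbb R e_j$, which gives $d\lambda_i(e_j)=\delta_{ij}$ and $d\lambda_i=0$ on the real-orthogonal complement, and the Peirce block action of $B(e,e)$ together with $(e_i|e_i)=\gamma$ then turns this into the gradient. The only difference is cosmetic: you use the tangent splitting $V=\bigoplus_j\mathbb R e_j\oplus\mathfrak k\cdot e$ from the diffeomorphism instead of differentiating a lift $\sigma(t)=k(t)\cdot e(t)$, which has the small merit of treating the directions $\mathrm{i}e_j$ and $V_{0j}$ explicitly rather than by ``similarly''.
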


\begin{proof}
Recall that the Hermitian Bergman metric and its associated Riemannian metric are
\[
h_e(u,v)=\big(B(e,e)^{-1}u \mid v\big),
\qquad
g_e=\mathbf{Re} \; h_e,
\]
where $(\cdot \mid \cdot)$ denotes the canonical Hermitian trace form.
By the Peirce block decomposition of the Bergman operator, for $1\le i\le r$,
\[
B(e,e)\big|_{V_{ii}}=(1-\lambda_i^2)^2\,\mathrm{Id},
\]
hence
\[
B(e,e)^{-1}e_i=(1-\lambda_i^2)^{-2}e_i .
\]
Therefore, as $e_i,e_j$ are \emph{orthogonal} tripotents we obtain
\[
h_e(e_i,e_j)=g_e(e_i,e_j)
=\gamma (1-\lambda_i^2)^{-2}\delta_{ij}.
\]

Since $e$ lies in $\mathrm{reg}(\Omega)$, $\lambda: z \mapsto (\lambda_1(z),\cdots,\lambda_r(z))$ is smooth on some neighborhood of $e$. We claim that
\[
(d\lambda_i)_e(e_j)=\delta_{ij}.
\]
Indeed, consider  $\sigma:(-\varepsilon,\varepsilon) \to \Omega$, a smooth curve with $\sigma(0)=e$ and $\sigma'(0)=e_j$. Then, writing $\sigma(t)=k(t) \cdot e(t)$ with $k(t) \in K$, $k(0)=\mathrm{Id}$ and $e(t)=\sum_{i=1}^r \lambda_i(t) e_i$ we get
\[
e'(t)=(k(t)^{-1} \sigma (t))'=-k(t)^{-1}k'(t)k(t)^{-1} \sigma (t)+k(t)^{-1}\sigma'(t).
\]
Therefore, we have
\[
e'(0)=-k'(0) e+e_j.
\]
Projecting  both sides of the equality in the Peirce decomposition and using Lemma \ref{orthogonality} yields, as claimed,
\[
(d\lambda_i)_e(e_j)=\delta_{ij}
\]
because $k'(0) \in \mathfrak{k}$ acts on $V$ as a skew-symmetric derivation satisfying \eqref{triple derivation}--- we refer to Step 1 in the proof of Theorem~\ref{spectral diffusion} for a detailed similar reasoning. Similarly, for $i<j$, $(d\lambda_i)_e(V_{ij})=0$.
By definition of the gradient, we get
\[
g_e(\nabla\lambda_i (e) , e_j)=\delta_{ij}.
\]
Since the vectors $e_j,j=1,\dots,r$ are $g_e$–orthogonal and
$g_e(e_j,e_j)=\gamma (1-\lambda_j^2)^{-2} \delta_{ij}$, we conclude
\[
\nabla\lambda_i(e)=\frac{1}{\gamma} (1-\lambda_i^2(e))^2 e_i .
\]
\end{proof}

Let $f:\Omega \to \mathbb{R}$ be a smooth $K$-invariant function, that is, $f(k\cdot z) = f(z), \;\; \forall k \in K, z\in\Omega$. Then $\nabla f$ is $K$-invariant too in the sense that $\nabla f(k\cdot \bullet) = k \nabla f(\bullet)$. As a consequence, to derive $\nabla f$ it suffices to find $\nabla f(e)$ for $e=\sum_{i=1}^{r}\lambda_ie_i$ and write it as a $K$-invariant form.

\begin{corollary}\label{martingale representation for S}
Let $(X(t))_{t \ge 0}$ be a Brownian motion on $\Omega$, and let $\lambda_i(t)$, $1 \le i \le r$, be the spectral values of $X(t)$. Then, there exists a Brownian motion $(\alpha(t))_{t \ge 0}$ in $\mathbb{R}^r$ such that 
\[
S(t) = \sqrt{\gamma} \sum_{i=1}^r \int_0^t \lambda_i(s) d\alpha_i(s), \quad t \ge 0.
\]
\end{corollary}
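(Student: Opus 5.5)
The plan is to combine Theorem~\ref{martingale property} with the gradient computation of Lemma~\ref{grad lambda}, and then build the Brownian motion $\alpha$ from the martingale part of $S$ split along the directions $J\nabla\lambda_i$, using L\'evy's characterization.

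First, by Theorem~\ref{martingale property} and the proof of Lemma~\ref{divergence free}, $S$ is a local martingale of the form
\[
S(t)=\sum_{k=1}^{2n}\int_0^t \alpha(U(s)\mathrm{e}_k)\,dW^k(s).
\]
The drift vanishes because $\delta\alpha=0$. Since $\alpha^\sharp=\frac12 J\nabla\Phi$, this reads
\[
dS(t)=\tfrac12\, g\bigl(J\nabla\Phi(X(t)),\,U(t)\,dW(t)\bigr).
\]
By Theorem~\ref{spectral diffusion}, $X(t)\in\mathrm{reg}(\Omega)$ for all $t>0$ almost surely. The spectral values are therefore smooth along the path, and $\lambda_i(t)>0$ there.

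Second, compute $\nabla\Phi$ on $\mathrm{reg}(\Omega)$. Since $N(z,z)=\prod_j(1-\lambda_j^2)$, we have $\Phi=-\gamma\sum_i\log(1-\lambda_i^2)$, and hence
\[
\nabla\Phi=\sum_{i=1}^r\frac{2\gamma\lambda_i}{1-\lambda_i^2}\,\nabla\lambda_i .
\]
By Lemma~\ref{grad lambda} at $e=\sum\lambda_ie_i$, $\nabla\lambda_i(e)=\frac1\gamma(1-\lambda_i^2)^2e_i$. These vectors are $g_e$-orthogonal with $g_e(\nabla\lambda_i,\nabla\lambda_j)=\frac1\gamma(1-\lambda_i^2)^2\delta_{ij}$. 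By the $K$-invariance remark following Lemma~\ref{grad lambda}, and because $K$ acts isometrically, the identity
\[
g(\nabla\lambda_i,\nabla\lambda_j)=\tfrac1\gamma(1-\lambda_i^2)^2\delta_{ij}
\]
holds at every point of $\mathrm{reg}(\Omega)$. Since $J$ is a $g$-isometry, the same identity holds for the vectors $J\nabla\lambda_i$.

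Third, define the local martingales
\[
N_i(t)=\int_0^t g\bigl(J\nabla\lambda_i(X(s)),U(s)\,dW(s)\bigr).
\]
By the previous step,
\[
d\langle N_i,N_j\rangle=\tfrac1\gamma(1-\lambda_i^2)^2\delta_{ij}\,dt .
\]
Set
\[
\alpha_i(t)=\int_0^t\frac{\sqrt\gamma}{1-\lambda_i(s)^2}\,dN_i(s).
\]
Then $\langle\alpha_i,\alpha_j\rangle(t)=\delta_{ij}t$, so by L\'evy's characterization $\alpha=(\alpha_1,\dots,\alpha_r)$ is a standard Brownian motion in $\mathbb{R}^r$. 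Finally, by linearity of the stochastic integral,
\[
dS=\tfrac12\sum_i\frac{2\gamma\lambda_i}{1-\lambda_i^2}\,dN_i=\sum_i\gamma\lambda_i\,\frac{1-\lambda_i^2}{\sqrt\gamma}\,d\alpha_i=\sqrt\gamma\sum_i\lambda_i\,d\alpha_i .
\]
As a consistency check, this gives $\langle S\rangle=\gamma\sum_i\int\lambda_i^2\,ds$. This agrees with $\frac14\int\|\nabla\Phi\|^2\,ds$, since $\|\nabla\Phi\|^2=4\gamma\sum\lambda_i^2$.

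The main obstacle is rigor at the singular set. The functions $\lambda_i$ and $N_i$ are only defined on $\mathrm{reg}(\Omega)$, and $X(0)$ may lie outside it. I would first define everything on $[\varepsilon,\infty)$, using that $X(t)\in\mathrm{reg}(\Omega)$ for all $t>0$. The integrands are bounded: $|J\nabla\lambda_i|\le 1/\sqrt\gamma$, and the $\alpha_i$-integrands are controlled. I would then let $\varepsilon\to0$; alternatively, I would extend $\alpha$ on a null time set by an independent Brownian motion, which is the standard trick. I also need to check that the identity $\nabla\Phi=\sum\frac{2\gamma\lambda_i}{1-\lambda_i^2}\nabla\lambda_i$ is legitimate pointwise on $\mathrm{reg}(\Omega)$, which follows from the smoothness of $\lambda$ there.
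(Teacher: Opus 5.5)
Your proof is correct, but it organizes the stochastic calculus differently from the paper.

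\textbf{The paper's route.} It globalizes the gradient computation to $\nabla\Phi(z)=2B(z,z)^{1/2}z$ on all of $\Omega$, and deduces $\alpha_z(v)=-\mathbf{Im}(B(z,z)^{-1/2}z\mid v)$. It then inserts the It\^o SDE $dX=B(X,X)^{1/2}d\beta$ of Theorem~\ref{Theorem EDS}. The Bergman factors cancel, giving $S(t)=-\int_0^t\mathbf{Im}(X(s)\mid d\beta(s))$. Finally it rotates the driving noise by the $K$-valued semimartingale $k(t)$ of the polar decomposition, $W=\int k^{-1}\cdot d\beta$, and reads off $\alpha_i=-\frac{1}{\sqrt\gamma}\mathbf{Im}(e_i\mid W)$.

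\textbf{Your route.} You never use the SDE or the process $k(t)$. You keep the intrinsic frame-bundle representation from Lemma~\ref{divergence free}. You then split $J\nabla\Phi=\sum_i\frac{2\gamma\lambda_i}{1-\lambda_i^2}J\nabla\lambda_i$ along fields that are pointwise $g$-orthogonal with norms $(1-\lambda_i^2)/\sqrt\gamma$. This is exactly what Lemma~\ref{grad lambda} and $K$-invariance provide, so you only need pointwise information about the fields $\nabla\lambda_i$.

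\textbf{What each buys.} Your argument adapts directly to Lemma~\ref{martingale representation for A}. Since $\theta^\sharp=\sum_i\lambda_i^{-1}J\nabla\lambda_i$, you get $dA=\sum_i\lambda_i^{-1}dN_i=\frac{1}{\sqrt\gamma}\sum_i\frac{1-\lambda_i^2}{\lambda_i}d\alpha_i$ with the same Brownian motion $\alpha$. The paper's route gives in exchange a global closed form for $\nabla\Phi$ and $\alpha$ valid off $\mathrm{reg}(\Omega)$ too. It also gives the L\'evy-area-type identity $S(t)=-\int_0^t\mathbf{Im}(X(s)\mid d\beta(s))$, written in terms of the noise driving $X$.

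\textbf{The singular set.} The issue you flag is milder than you suggest. The integrands $J\nabla\lambda_i(X(s))$ are defined and bounded whenever $X(s)\in\mathrm{reg}(\Omega)$. They never degenerate, since $1-\lambda_i^2>0$. By Theorem~\ref{spectral diffusion}, the exceptional times form an almost surely Lebesgue-null set (at most $\{0\}$). So you can define all stochastic integrals directly on $[0,\infty)$, and Lévy's characterization applies at once. No $\varepsilon\to0$ limit and no auxiliary independent Brownian motion are needed.
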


\begin{proof}

\smallskip
\noindent\textbf{Step 1.}  We first compute the Riemannian gradient of $\Phi$ with respect to the Bergman metric and prove $\nabla \Phi(z)=2 B(z,z)^{1/2} z, \, z \in \Omega$. As $\Phi$ is smooth and $\mathrm{reg}(\Omega)$ is dense in $\Omega$, it suffices to compute it on the former set.
Note that by definition
\[
\Phi(z)=-\gamma \sum_{j=1}^r \log(1-\lambda_j^2(z))
,
\]
hence, writing $e(z)=\sum_{i=1}^r \lambda_i(z)e_i$ and using Lemma \ref{grad lambda}, we obtain
\begin{align*}
\nabla \Phi \left(e(z)\right)
    =\sum_{i=1}^r \frac{2\gamma \lambda_i(z)}{1-\lambda_i^2(z)}\nabla\lambda_i(z)
    = 2 \sum_{i=1}^r \lambda_i(z)(1-\lambda_i^2(z))e_i
    = 2 B(e(z),e(z))^{1/2} e(z)
,\quad z\in\mathrm{reg}(\Omega)
\end{align*}
whence, using the $K$-equivariance of the Bergman operator 
$
    B(k\cdot z,k\cdot z) = k B(z,z) k^{-1}
$
, we get
\begin{align*}
    \nabla \Phi(z)
    &=
    \nabla \Phi(k(z)\cdot e(z))
    = 
    k(z) \nabla \Phi(e(z))
    = 
    2\, k(z) \cdot B(e(z),e(z))^{1/2} e(z)
    \\&=
    2 B\left(k(z) \cdot e(z),k(z) \cdot e(z)\right)^{1/2} k(z) \cdot e(z)
    =
    2\, B(z,z)^{1/2} z
\end{align*}
which is continuous as expected and allows one to use a density argument.

\smallskip
\noindent\textbf{Step 2.}  We then prove $\alpha_z (v)=-   \mathbf{Im} \left( B(z,z)^{-1/2} z | v \right), \quad z \in \Omega, v \in V$. To compute $\alpha$, we use from the proof of Theorem~\ref{martingale property} the fact that under the musical isomorphism the K\"ahler $d^c$-potential form can be expressed as
\[
\alpha^\sharp=\frac{1}{2}J\nabla\Phi.
\]
Therefore, as $g=\mathbf{Re}(h)$ is $J$ invariant, we have
\begin{align*}
\alpha_z (v)=\frac{1}{2}g_z(J\nabla\Phi (z),v)=-\frac{1}{2} g_z(\nabla\Phi (z),Jv)=- \mathbf{Re} \, h_z( B(z,z)^{1/2}z , Jv)=- \,  \mathbf{Im} \left( B(z,z)^{-1/2} z | v \right).
\end{align*}

\smallskip
\noindent\textbf{Step 3: Computation of $S$.}
From \textbf{Step 2} and Theorem \ref{martingale property} which yields the local martingale property of $S(t)$ one has 
\begin{align*}
S(t)=\int_{X[0,t]} \alpha =- \int_0^t \mathbf{Im} \left( B(X(s),X(s))^{-1/2} X(s) | dX(s) \right).
\end{align*}
Now, from Theorem \ref{Theorem EDS}, one has $dX(s)=B(X(s),X(s))^{1/2}d \beta(s)$, therefore one obtains by $(\cdot | \cdot)$-symmetry of the Bergman operator
\begin{align*}
S(t)&=- \int_0^t \mathbf{Im} \left( B(X(s),X(s))^{-1/2} X(s) | B(X(s),X(s))^{1/2}d \beta(s) \right) \\
 &=- \int_0^t \mathbf{Im} \left(  X(s) | d\beta(s) \right).
\end{align*}
We now use the polar decomposition to write $X(t)=k(t) \cdot e(t)$ where $k(t) \in K$ and $e(t)=\sum_{i=1}^r \lambda_i(t) e_i$. This yields
\[
S(t)=- \int_0^t \mathbf{Im} \left(  k(s) \cdot e(s) | d\beta(s) \right)=- \int_0^t \mathbf{Im} \left(   e(s) | k(s)^{-1} \cdot d\beta(s) \right).
\]
Since $k$ acts isometrically, the process $W(t):=\int_0^t k(s)^{-1} \cdot d\beta(s)$ is a complex Brownian motion on $(V,(\cdot |\cdot))$. We get then
\[
S(t)=-\int_0^t \sum_{i=1}^r \lambda_i(s) \mathbf{Im} \left(  e_i  | dW(s) \right)
\]
and the conclusion follows thanks to the Lévy's characterization of Brownian motion by defining
\[
\alpha_i(t)=-\frac{1}{\sqrt{\gamma}} \mathbf{Im} \left(  e_i  | W(t) \right).
\]

\end{proof}

\subsection{Central limit theorem for the stochastic area}

The decomposition in Corollary \ref{martingale representation for S} suggests asymptotic Gaussianity, which we now make precise.

\begin{theorem}\label{CLT area}
Let $(X(t))_{t \ge 0}$ be a Brownian motion on $\Omega$ and let $(S(t))_{ t \ge 0}$ be the associated stochastic area process. Then, as $t \to +\infty$, one has the convergence in distribution
\[
\frac{1}{\sqrt{t}}\, S(t)
\;\Longrightarrow\;
\mathcal{N}(0,\gamma  r),
\]
where $\mathcal{N}(0,\gamma r)$ denotes the centered Gaussian distribution with variance $\gamma  r$.
\end{theorem}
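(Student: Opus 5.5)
The plan is to combine the martingale representation of Corollary~\ref{martingale representation for S} with the law of large numbers for the radial Heckman--Opdam process, and then apply a martingale central limit theorem. By Corollary~\ref{martingale representation for S},
\[
S(t)=\sqrt{\gamma}\sum_{i=1}^r\int_0^t \lambda_i(s)\,d\alpha_i(s),
\]
with $(\alpha_i)$ a standard Brownian motion in $\mathbb{R}^r$. Hence $S$ is a continuous local martingale with
\[
\langle S\rangle(t)=\gamma\sum_{i=1}^r\int_0^t \lambda_i(s)^2\,ds .
\]
Since $0\le\lambda_i<1$, this is bounded by $\gamma r t$, so $S$ is a true square-integrable martingale.

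\textbf{Step 1: quadratic variation.} We show that $\langle S\rangle(t)/t\to\gamma r$ almost surely. By Theorem~\ref{spectral diffusion}, $\tau(t)$ is the radial Heckman--Opdam diffusion with generator $\frac12\mathcal{L}_{a,1,2b}$. Its multiplicities are non-negative, so the law of large numbers \cite[Proposition~4.2]{Schapira_Heckman} applies: $\tau(t)/t\to\rho$ almost surely, with $\rho$ in the open chamber $\mathcal C$. In particular each coordinate $\tau_i(t)\to+\infty$. Therefore $\lambda_i(t)=\tanh\tau_i(t)\to1$ almost surely. By Cesàro averaging, which is legitimate since the integrand is bounded,
\[
\frac1t\int_0^t\lambda_i(s)^2\,ds\to1 \quad \text{almost surely},
\]
and so $\langle S\rangle(t)/t\to\gamma r$. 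The only point needing care is to check that all components of $\rho$ are strictly positive, including the last one $\rho_r$. From the $BC_r$ form of $\rho$, its coordinates are $\rho_i=\frac12\bigl(k_3+2k_2+2k_1(r-i)\bigr)$, which here equal $b+1+a(r-i)>0$.

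\textbf{Step 2: central limit theorem.} By the Dambis--Dubins--Schwarz theorem, write $S(t)=B(\langle S\rangle(t))$ for a Brownian motion $B$. We take $B$ to be a standard Brownian motion, possibly after enlarging the probability space, although the quadratic variation is unbounded so no enlargement is needed. Then
\[
\frac{S(t)}{\sqrt t}=\frac{B(\gamma r t)}{\sqrt t}+\frac{B(\langle S\rangle(t))-B(\gamma r t)}{\sqrt t}.
\]
The first term has exactly the law $\mathcal N(0,\gamma r)$ for every $t$.

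For the second term, fix $\varepsilon>0$. On the event $\{|\langle S\rangle(t)-\gamma rt|\le\varepsilon t\}$, whose probability tends to $1$ by Step 1, the second term is bounded by
\[
\sup_{|u-\gamma r t|\le\varepsilon t}\frac{|B(u)-B(\gamma rt)|}{\sqrt t}.
\]
By Brownian scaling, this supremum has the same law as $\sup_{|v-\gamma r|\le\varepsilon}|B(v)-B(\gamma r)|$. That quantity is small in probability when $\varepsilon$ is small, by continuity of paths. Hence the second term tends to $0$ in probability, and Slutsky's lemma gives the claim. Alternatively, one can cite the standard continuous martingale CLT: $\langle S\rangle(t)/t\to c$ in probability implies $S(t)/\sqrt t\Rightarrow\mathcal N(0,c)$.

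The main obstacle is Step 1, namely making sure the cited law of large numbers applies. This means checking two things: the drift vector $\rho$ has strictly positive coordinates, so that every $\lambda_i\to1$ and not only the top ones; and the almost-sure statement holds from an arbitrary starting point, including non-regular ones. For the latter, Step 4 of Theorem~\ref{spectral diffusion} lets us restart at a small positive time where $X$ is in $\mathrm{reg}(\Omega)$. That restart affects neither the Cesàro limit nor the fluctuations at scale $\sqrt t$.
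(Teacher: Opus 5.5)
Your proposal is correct and follows the paper's proof. Both use the martingale representation of Corollary~\ref{martingale representation for S}, then Schapira's law of large numbers to get $\tau_i(t)\to+\infty$ and hence $\langle S\rangle(t)/t\to\gamma r$ almost surely, and then the martingale central limit theorem. The paper simply cites that CLT, whereas you also prove it here via Dambis--Dubins--Schwarz and Brownian scaling, and you explicitly check that each limiting drift coordinate $\rho_i=b+1+a(r-i)$ is positive, which the paper leaves implicit.
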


\begin{proof}
By Corollary~\ref{martingale representation for S}, the process $S(t)$ is a martingale with quadratic variation
\[
\langle S \rangle (t)
=
\gamma \sum_{i=1}^r \int_0^t \lambda_i(s)^2\, ds
=
\gamma \sum_{i=1}^r \int_0^t \tanh(\tau_i(s))^2\, ds.
\]

From Theorem~\ref{spectral diffusion}, the process $\tau=(\tau_1,\dots,\tau_r)$ satisfies the system of stochastic differential equations
\[
d\tau_i(t)
=
d\beta_i(t)
+
\left(
b\,\coth(\tau_i(t))
+\coth(2\tau_i(t))
+\frac{a}{2}\sum_{j\neq i}
\bigl(\coth(\tau_i(t)-\tau_j(t))+\coth(\tau_i(t)+\tau_j(t))\bigr)
\right)dt,
\]
where $\beta(t)$ is a Brownian motion in $\mathbb{R}^r$. By Proposition~4.2 in \cite{Schapira_Heckman}, one has almost surely
\[
\tau_i(t)\longrightarrow +\infty
\qquad \text{as } t\to+\infty,
\]
for each $i=1,\dots,r$. Consequently,
\[
\tanh(\tau_i(t)) \longrightarrow 1
\qquad \text{a.s.}
\]
and therefore
\[
\frac{1}{t}\langle S \rangle (t)
=
\frac{\gamma}{t}
\sum_{i=1}^r \int_0^t \tanh(\tau_i(s))^2\, ds
\longrightarrow
\gamma r
\qquad \text{a.s.}
\]

The conclusion now follows from the martingale central limit theorem
(see, for instance, Theorem~4.1 in \cite{MR1792301}), which yields the convergence in distribution
\[
\frac{1}{\sqrt{t}}\, S(t)
\Longrightarrow
\mathcal{N}(0,\gamma r).
\]
\end{proof}
\subsection{Characteristic function of the stochastic area}

While the central limit theorem gives the limit law, we end this section by deriving a closed expression for the characteristic function of the stochastic area $S(t)$ for a fixed $t >0$.

\begin{proposition}\label{skew product area}
Let $(X(t))_{t \ge 0}$ be a Brownian motion on $\Omega$, and let $\lambda_i(t)$, $1 \le i \le r$, denote the spectral values of $X(t)$. Let $(S(t))_{t \ge 0}$ be the associated stochastic area process, and set $\tau_i(t)=\arctanh(\lambda_i(t))$.  Then the process
\[
(\tau(t), S(t))_{t \ge 0}
\]
is a diffusion with generator
\[
\frac{1}{2}\mathcal{L}_{a,1,2 b}
\;+\;
\frac{1}{2}\gamma \left(\sum_{i=1}^r \tanh(\tau_i)^2 \right)
\frac{\partial^2}{\partial s^2},
\]
where $\mathcal{L}_{a,1,2 b}$ is given by \eqref{radial generator}.
\end{proposition}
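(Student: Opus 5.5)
The plan is to write down the joint SDE for $(\tau,S)$ and read off its generator. The one thing to watch is that the Brownian motions driving $\tau$ and $S$ must be independent. By Theorem~\ref{spectral diffusion}, $\tau$ solves \eqref{SDE tau}, driven by an $\mathbb{R}^r$-valued Brownian motion $\beta$. By Corollary~\ref{martingale representation for S},
\[
dS(t)=\sqrt{\gamma}\sum_{i=1}^r \tanh(\tau_i(t))\,d\alpha_i(t).
\]
Here $\alpha_i(t)=-\gamma^{-1/2}\,\mathbf{Im}(e_i\mid W(t))$ and $W(t)=\int_0^t k(s)^{-1}\cdot d\beta(s)$ is the rotated complex Brownian motion on $(V,(\cdot|\cdot))$.

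The key step is to compute the cross-variations $\langle \tau_i,\alpha_j\rangle$. Step~1 of the proof of Theorem~\ref{spectral diffusion} shows that the martingale part of $\lambda_i$ is the real part of the $e_i$-component of $(1-\lambda_i^2)\,dW_{ii}$. After normalising by $\|e_i\|^2=\gamma$, the martingale part of $\tau_i$ is $\gamma^{-1/2}\,\mathbf{Re}(e_i\mid dW)$, up to sign and normalisation conventions. This is consistent with $d\langle\tau_i,\tau_j\rangle=\delta_{ij}dt$, so the Brownian motion $\beta_i$ in \eqref{SDE tau} can be identified with $\gamma^{-1/2}\mathbf{Re}(e_i\mid W)$, while $\alpha_i$ is built from $\mathbf{Im}(e_i\mid W)$. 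Real and imaginary parts of the coordinates of a complex Brownian motion along orthogonal vectors $e_i,e_j$ are independent real Brownian motions. Hence
\[
\langle \beta_i,\beta_j\rangle=\langle\alpha_i,\alpha_j\rangle=\delta_{ij}t,\qquad \langle\beta_i,\alpha_j\rangle=0,
\]
so $(\beta,\alpha)$ is a $2r$-dimensional Brownian motion. This identification is the main obstacle: one has to track precisely that the radial noise comes only from the real projection of $dW_{ii}$ onto $\mathbb{R}e_i$. The component along $\ic\mathbb{R}e_i$ is absorbed by the $\ic\mathbb{R}e_i$-part of $k^{-1}\circ dk\cdot e$, which the proof of Theorem~\ref{spectral diffusion} shows lies in $\bigoplus_i \ic\mathbb{R}e_i\oplus\bigoplus_{i<j}V_{ij}$. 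I will redo that projection carefully, keeping the normalisation $(e_i|e_i)=\gamma$.

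With this in hand, $(\tau,S)$ solves an SDE driven by $(\beta,\alpha)$ whose coefficients depend only on $\tau$. The drift of $\tau$ is given by \eqref{SDE tau}, $S$ has no drift, and $S$ has diffusion coefficients $\sqrt{\gamma}\tanh(\tau_i)$. Pathwise uniqueness holds since $\tau$ is the unique strong solution of \eqref{SDE tau} and $S$ is then an explicit stochastic integral. So $(\tau,S)$ is a Markov diffusion. Applying It\^o's formula to $f(\tau(t),S(t))$ gives second-order terms
\[
\tfrac12\sum_i\partial_{\tau_i}^2+\tfrac12\gamma\sum_i\tanh^2(\tau_i)\,\partial_s^2,
\]
with no mixed $\partial_{\tau_i}\partial_s$ terms, because the cross-variations vanish. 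The first-order terms are exactly those of $\tfrac12\mathcal{L}_{a,1,2b}$. This yields the claimed generator.
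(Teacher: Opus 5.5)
Your proof is correct in substance but takes a different route from the paper. The paper works with vector fields. It writes $\langle\lambda_i,S\rangle(t)=\frac12\int_0^t g(\nabla\lambda_i,J\nabla\Phi)(X(s))\,ds$, then uses Lemma~\ref{grad lambda} and $\nabla\Phi(e)=2\sum_j\lambda_j(1-\lambda_j^2)e_j$. Both gradients lie in $\bigoplus_j\mathbb{R}e_j$, where $h_e$ is real, so $\mathbf{Im}\,h_e(\nabla\lambda_i,\nabla\Phi)=0$. You instead go back to the driving noise $W=\int k^{-1}\cdot d\beta$. You show that the radial martingale parts come from $\mathbf{Re}(e_i\mid dW)$ while $S$ is driven by the $\mathbf{Im}(e_j\mid dW)$. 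The underlying fact is the same: $(e_i\mid e_j)$ is real, so $\mathbb{R}e_i$ and $\ic\mathbb{R}e_j$ are orthogonal for $\mathbf{Re}(\cdot\mid\cdot)$. What each route buys:
\begin{itemize}
\item The paper's route needs only the gradient lemma. It never has to re-track the $\mathfrak k$-valued term $k^{-1}\circ dk\cdot e$.
\item Your route gives an explicit SDE for $(\tau,S)$, driven by a $2r$-dimensional Brownian motion $(\beta,\alpha)$ with coefficients depending only on $\tau$. This makes explicit the Markov property and the skew-product structure later used in Theorem~\ref{characteristic function area}. The paper only asserts that vanishing cross-variation suffices.
\end{itemize}

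Your normalisation is off, though. Since $(e_i\mid e_i)=\gamma$, one has $d\lambda_i=\gamma^{-1}\mathbf{Re}(de\mid e_i)$. The careful projection you plan therefore gives the martingale part of $\tau_i$ as $\gamma^{-1}\mathbf{Re}(e_i\mid dW)$, not $\gamma^{-1/2}\mathbf{Re}(e_i\mid dW)$. Its quadratic variation is $t/\gamma$. This agrees with Lemma~\ref{grad lambda}: together with $g_e(e_i,e_i)=\gamma(1-\lambda_i^2)^{-2}$, it gives $\|\nabla\tau_i\|^2=\gamma^{-1}$. It does not agree with the unit quadratic variation stated in Theorem~\ref{spectral diffusion}.

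So you cannot present the identification $\beta_i=\gamma^{-1/2}\mathbf{Re}(e_i\mid W)$ as a consistency check. The normalisation of the $\tau$-block is simply inherited from Theorem~\ref{spectral diffusion}, exactly as in the paper's proof. Your own computation in fact exposes a factor-$\gamma$ discrepancy with that theorem's normalisation. For the proposition itself this is harmless: the key step $\langle\tau_i,\alpha_j\rangle=0$ holds whatever real constant multiplies $\mathbf{Re}(e_i\mid dW)$.
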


\begin{proof}
Since $(\tau(t))_{t \ge 0}$ is a diffusion with generator $\frac{1}{2}\mathcal{L}_{a,1,2 b}$ and
\[
\langle S \rangle (t)
=
\gamma \sum_{i=1}^r \int_0^t \tanh(\tau_i(s))^2\,ds,
\]
it suffices to show that for every $i=1,\dots,r$,
\[
\langle \tau_i , S \rangle(t)=0.
\]
Because $\tau_i=\arctanh(\lambda_i)$, this is equivalent to proving
\[
\langle \lambda_i , S \rangle(t)=0.
\]

Since $S(t)=\int_{X[0,t]} \alpha$ where $X$ is a Brownian motion, we have $d\left\langle X^i,X^j \right\rangle(t) = g^{ij}(X(t))dt$ and, by \Cref{spectral diffusion}, almost surely $X(t),t\ge0$ belongs to $\mathrm{reg}(\Omega)$ on which $\lambda:z\mapsto(\lambda_1(z),\dots,\lambda_r(z))$ is smooth. Hence, we can write
\[
\langle \lambda_i , S \rangle(t)
=
\int_0^t
g_{X(s)}\big(
\nabla \lambda_i (X(s)),
\alpha^\sharp (X(s))
\big)\,ds
=
\frac{1}{2} \int_0^t
g_{X(s)}\big(
\nabla \lambda_i (X(s)),
J \nabla \Phi (X(s))
\big)\,ds.
\]

Thus it remains to prove that on $\mathrm{reg}(\Omega)$ we have
\[
g(\nabla \lambda_i , J \nabla \Phi)=0,
\]
or equivalently,
\[
\mathbf{Im}\big(
h(\nabla \lambda_i , \nabla \Phi)
\big)=0.
\]

By $K$-invariance, it is enough to verify the identity at  elements of the form
\[
e=\sum_{i=1}^r \lambda_i e_i \quad \text{ where } \quad 1>\lambda_1 > \lambda_2 > \dots >\lambda_r >0.
\]
At such a point,
\[
\nabla \lambda_i (e)
=
\frac{1}{\gamma}(1-\lambda_i^2)^2 e_i,
\qquad
\nabla \Phi (e)
=
2 \sum_{i=1}^r \lambda_i(1-\lambda_i^2)e_i.
\]
Since
\[
h_e(e_i,e_j)
=
\gamma (1-\lambda_i^2)^{-2}\delta_{ij},
\]
the Hermitian product
$h_e(\nabla \lambda_i(e), \nabla \Phi(e))$
is real. Therefore its imaginary part vanishes, and the result follows.
\end{proof}

We are now ready for the characteristic function formula.

\begin{theorem}\label{characteristic function area}
    We have for every $t>0$, $\alpha >0$ and $\tau \in \bar{\mathcal{C}}$
    \begin{align*}
\mathbb{E}\left( e^{i\alpha S(t)} \mid \tau(t)= \tau \right) & =\mathbb{E}\left( e^{-\frac{\alpha^2}{2}  \gamma \sum_{i=1}^r \int_0^t \tanh(\tau_i(s))^2\,ds } \mid \tau(t)= \tau \right) \\
 &=e^{n\alpha \sqrt{\gamma}t} \frac{p^{a,1+2\alpha \sqrt{\gamma},2 b-2\alpha \sqrt{\gamma}}_t(\tau(0),\tau)}{p^{a,1,2 b}_t(\tau(0),\tau)}\prod_{i=1}^r \left( 4 \cosh (\tau_i(0)) \cosh (\tau_i)  \right)^{\alpha \sqrt{\gamma}},   
    \end{align*}
    where $p^{k_1,k_2,k_3}_t$ is the heat kernel of the Heckman-Opdam Laplacian $\frac{1}{2}\mathcal{L}_{k_1,k_2,k_3}$ about which details are collected in Appendix \ref{sec:Heckman-Opdam_Laplacians}.
\end{theorem}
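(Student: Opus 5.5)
The first equality will come from the skew-product structure in Proposition~\ref{skew product area}. Since $\langle \tau_i,S\rangle=0$ and $S$ is a continuous local martingale with $\langle S\rangle(t)=\gamma\sum_i\int_0^t\tanh(\tau_i(s))^2ds$, a Dambis--Dubins--Schwarz/Knight argument shows that, conditionally on the whole path $(\tau(s))_{0\le s\le t}$, $S(t)$ is centered Gaussian with variance $\langle S\rangle(t)$. Equivalently, the generator in Proposition~\ref{skew product area} shows directly that
\[
\mathbb E\bigl[e^{i\alpha S(t)}F(\tau(t))\bigr]=\mathbb E\Bigl[e^{-\frac{\alpha^2\gamma}{2}\sum_i\int_0^t\tanh^2(\tau_i(s))ds}F(\tau(t))\Bigr]
\]
for bounded $F$, which is the first identity.

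For the second equality I use Girsanov with $h(\tau)=\sum_i\log\cosh\tau_i$ and $c=\alpha\sqrt\gamma$. I consider
\[
D_\alpha(t)=\exp\Bigl(c\,h(\tau(t))-c\,h(\tau(0))-\int_0^t\bigl(\tfrac c2\mathcal L_{a,1,2b}h+\tfrac{c^2}{2}|\nabla h|^2\bigr)(\tau(s))\,ds\Bigr),
\]
where $|\nabla h|^2=\sum_i\tanh^2\tau_i$. The key computation is $\mathcal L_{a,1,2b}h$, which has three ingredients:
\begin{itemize}
\item $\partial_i^2 h=1-\tanh^2\tau_i$;
\item $2b\coth\tau_i\tanh\tau_i=2b$ and $2\coth(2\tau_i)\tanh\tau_i=1+\tanh^2\tau_i$;
\item by Lemma~\ref{computation coth}, $\sum_i\tanh\tau_i\sum_{j\ne i}\bigl(\coth(\tau_i-\tau_j)+\coth(\tau_i+\tau_j)\bigr)$ is a constant (presumably $r(r-1)$).
\end{itemize}
The $\tanh^2$ terms cancel, so $\frac c2\mathcal L_{a,1,2b}h$ is a constant $\kappa c$, with $\kappa$ expressible via $n$ and $r$. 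Hence
\[
D_\alpha(t)=\Bigl(\prod_i\frac{\cosh\tau_i(t)}{\cosh\tau_i(0)}\Bigr)^{c}e^{-\kappa ct}e^{-\frac{c^2}{2}\int_0^t\sum_i\tanh^2\tau_i\,ds}.
\]
Under $\mathbb Q=D_\alpha\cdot\mathbb P$, each $\tau_i$ acquires the additional drift $c\tanh\tau_i$. Using $2k_2\coth2\tau+k_3\coth\tau=(k_2+k_3)\coth\tau+k_2\tanh\tau$, one checks that the new generator is $\frac12\mathcal L_{a,1+2c,2b-2c}$.

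With this, the conditional Laplace functional becomes
\[
\mathbb E\Bigl[e^{-\frac{c^2}{2}\int\sum\tanh^2}F(\tau(t))\Bigr]=e^{\kappa ct}\prod_i\cosh\tau_i(0)^{c}\,\mathbb E^{\mathbb Q}\Bigl[\prod_i\cosh\tau_i(t)^{-c}F(\tau(t))\Bigr].
\]
I write both sides against heat kernels: the left side's $F$-integrand is the unknown conditional expectation times $p^{a,1,2b}_t(\tau(0),\tau)\,dm_{a,1,2b}$, and the right side uses $p^{a,1+2c,2b-2c}_t\,dm_{a,1+2c,2b-2c}$. The density ratio is
\[
\frac{dm_{a,1+2c,2b-2c}}{dm_{a,1,2b}}=\prod_i\sinh(2\tau_i)^{2c}\sinh(\tau_i)^{-2c}=\prod_i(2\cosh\tau_i)^{2c}.
\]
Combined with $\cosh^{-c}\tau_i$, this yields the factor $\prod_i(4\cosh\tau_i(0)\cosh\tau_i)^{c}$ up to the power bookkeeping, and identifying $\kappa$ with $n$ gives $e^{nct}$. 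I will also need symmetry of the heat kernels in their two arguments to place $\tau(0)$ consistently. Identifying the integrands for all $F$ gives the formula for $\tau\in\mathcal C$, and continuity extends it to $\bar{\mathcal C}$.

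The main obstacle is justifying that $D_\alpha$ is a true martingale and that the $\mathbb Q$-process is the Heckman--Opdam diffusion with the possibly negative multiplicity $k_3=2b-2c$. Its drift near the wall $\tau_r=0$ may be repulsive or attractive, so I must ensure that uniqueness of the diffusion, the absence of explosion or killing, and the heat kernel with respect to $m_{a,1+2c,2b-2c}$ all hold. Since $k_1=a\ge1$, $k_2=1+2c\ge0$ and $k_2+k_3=1+2b\ge1$, this falls within the extended setting of Appendix~\ref{sec:Heckman-Opdam_Laplacians}, whose exponential-integrability estimates I would invoke. I would first localize with stopping times before the walls and let them tend to infinity, using non-attainability of the walls under both measures to obtain $\mathbb E[D_\alpha(t)]=1$.
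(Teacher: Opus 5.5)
Your proposal is correct and is essentially the paper's proof: skew-product reduction to the Laplace functional of $\gamma\sum_i\int_0^t\tanh^2\tau_i(s)\,ds$, Girsanov with the exponential martingale built from $\sum_i\log\cosh\tau_i$ (whose image under $\frac12\mathcal L_{a,1,2b}$ is the constant $n$ by Lemma~\ref{computation coth}), identification of the tilted generator as $\frac12\mathcal L_{a,1+2\alpha\sqrt\gamma,2b-2\alpha\sqrt\gamma}$, and comparison of heat-kernel densities using $dm_{a,1+2\alpha\sqrt\gamma,2b-2\alpha\sqrt\gamma}/dm_{a,1,2b}=\prod_i(2\cosh\tau_i)^{2\alpha\sqrt\gamma}$. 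The step you flag as the main obstacle is in fact easy: the integrand $\tanh\tau_i$ is bounded, so Novikov's criterion gives the true-martingale property immediately (the paper instead bounds $\mathbb E\sup_{s\le t}D_\alpha(s)$ via exponential integrability of $\tau_1$ and Doob). Moreover, since $D_\alpha(t)>0$ the two measures are equivalent on $\mathcal F_t$, so non-attainment of the walls under the new measure is inherited automatically without any localization.
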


\begin{proof}
Let $\alpha >0$. From the form of the diffusion generator in Proposition \ref{skew product area}, one has a skew-product structure for $(\tau(s),S(s)),s\ge0$ which yields
\begin{align*}
\mathbb{E}\left( e^{i\alpha S(t)} \mid \tau(t)= \tau \right)&=\mathbb{E}\left( e^{i\alpha W\left( \gamma \sum_{i=1}^r \int_0^t \tanh(\tau_i(s))^2\,ds \right)} \mid \tau(t)= \tau \right)
\end{align*}
where $W$ is a one-dimensional Brownian motion independent from the process $\tau(s)$, $s \ge 0$. Consequently we have
\begin{align}\label{conditional expectation}
\mathbb{E}\left( e^{i\alpha S(t)} \mid \tau(t)= \tau \right)=\mathbb{E}\left( e^{-\frac{\alpha^2}{2}  \gamma \sum_{i=1}^r \int_0^t \tanh(\tau_i(s))^2\,ds } \mid \tau(t)= \tau \right).
\end{align}
Consider now the local martingale
\[
D_\alpha (t) =\exp \left( \alpha \sqrt{\gamma} \sum_{i=1}^r \int_0^t \tanh(\tau_i(s))\,d\beta_i(s)-\frac{\alpha^2}{2}  \gamma \sum_{i=1}^r \int_0^t \tanh(\tau_i(s))^2\,ds\right),
\]
where $\beta$ is the Brownian motion in $\mathbb R^r$ that drives the stochastic differential equation for $\tau$:
\begin{align}\label{SDE tau proof}
d\tau_i(t)
=
d\beta_i(t)
+
\left(
b\coth(\tau_i(t))
+\coth(2\tau_i(t))
+\frac{a}{2}\sum_{j\neq i}
\bigl(\coth(\tau_i(t)-\tau_j(t))+\coth(\tau_i(t)+\tau_j(t))\bigr)
\right)dt.
\end{align}
Applying It\^o's formula to $\ln \cosh \tau_i(t)$, Lemma \ref{computation coth} and the identity
\[
\tanh (x)\coth(2x)+\frac{1}{2(\cosh x)^2}=1
\]
one gets
\begin{align*}
 & \sum_{i=1}^r d\ln \cosh \tau_i(t)
 \\
= &\sum_{i=1}^r \left(\tanh(\tau_i(t)) d\tau_i(t)+\frac{dt}{2\cosh \tau_i(t)^2}\right) \\
 =&\sum_{i=1}^r \tanh(\tau_i(t)) d\beta_i(t)+\left( b+1+\frac{a}{2}\sum_{j\neq i}\tanh(\tau_i(t)) 
\bigl(\coth(\tau_i(t)-\tau_j(t))+\coth(\tau_i(t)+\tau_j(t))\bigr) \right)dt \\
=& \left( r(b+1) +\frac{r(r-1)}{2} a \right)dt+\sum_{i=1}^r \tanh(\tau_i(t)) d\beta_i(t) \\
=&n dt +\sum_{i=1}^r \tanh(\tau_i(t)) d\beta_i(t),
\end{align*}
where we used \eqref{dimension} in the last equality. As a consequence, one can rewrite the local martingale $D_\alpha (t)$ as
\begin{align}\label{martingale formula}
D_\alpha (t)=e^{-n\alpha \sqrt{\gamma} t} \exp \left( -\frac{\alpha^2}{2}  \gamma \sum_{i=1}^r \int_0^t \tanh(\tau_i(s))^2\,ds\right) \prod_{i=1}^r \left( \frac{\cosh \tau_i(t)}{\cosh \tau_i(0) }\right)^{\alpha \sqrt{\gamma}}.
\end{align}
Let us now prove that $D_\alpha (t)$ actually is a true martingale. It is enough to prove that for every $t \ge 0$
\[
\mathbb{E}\left( \sup_{0\le s \le t} D_\alpha(s) \right)<+\infty.
\]
From \eqref{martingale formula} and the inequalities $\tau_i (t) \le \tau_1(t)$, it is enough to  prove that
\begin{align}\label{integrability}
\mathbb{E}\left( \sup_{0\le s \le t} e^{r \alpha \sqrt{\gamma} \tau_1(s)} \right)<+\infty.
\end{align}
It follows from \eqref{SDE tau proof} and Lemma \ref{exponential integrability} that $\tau_1(t)$ is a sub-martingale. Therefore Lemma \ref{exponential integrability} again and Doob's maximal inequality imply \eqref{integrability}. Since $D_\alpha (t)$  is a true martingale we can consider the probability measure defined by
\[
\mathbb{P}^\alpha_{\mathcal F_t}= D_\alpha (t) \; \mathbb{P}_{\mathcal F_t}
\]
where $\mathcal{F}_t$ is the natural filtration of the Brownian motion $\beta$. From Girsanov's theorem, the process
\[
\beta^{\alpha}_i(t)=\beta_i(t)-\alpha \sqrt{\gamma} \int_0^t \tanh (\tau_i(s)) ds
\]
is a Brownian motion under $\mathbb{P}^\alpha$, and using \eqref{SDE tau proof} together with the identity
\[
\tanh (x)=-\coth (x)+2\coth(2x)
\]
we can write
\begin{align}\label{SDE tau proof Palpha}
 d\tau_i(t) 
=& 
d\beta^\alpha_i(t)
+
\left(
(b-\alpha \sqrt{\gamma})\coth(\tau_i(t))
+(1+2\alpha \sqrt{\gamma})\coth(2\tau_i(t)) \right) dt \\
 & 
 +\frac{a}{2}\sum_{j\neq i}
\bigl(\coth(\tau_i(t)-\tau_j(t))+\coth(\tau_i(t)+\tau_j(t))\bigr) dt. \notag
\end{align}
Now, for any bounded Borel function $f$ on $\bar{\mathcal{C}}$, using \cref{martingale formula}, one has
\begin{align*}
\mathbb{E} \left( \exp \left( -\frac{\alpha^2}{2}  \gamma \sum_{i=1}^r \int_0^t \tanh(\tau_i(s))^2\,ds\right) f(\tau(t))\right)& =e^{n\alpha \sqrt{\gamma} t}\mathbb{E} \left( D_\alpha(t) \prod_{i=1}^r \left( \frac{\cosh \tau_i(0)}{\cosh \tau_i(t) }\right)^{\alpha \sqrt{\gamma}} f(\tau(t))\right) \\
 &=e^{n\alpha \sqrt{\gamma} t}\mathbb{E}_\alpha \left(  \prod_{i=1}^r \left( \frac{\cosh \tau_i(0)}{\cosh \tau_i(t) }\right)^{\alpha \sqrt{\gamma}} f(\tau(t))\right),
\end{align*}
where $\mathbb{E}_\alpha$ denotes the expectation under the probability $\mathbb{P}_\alpha$. Thanks to \eqref{SDE tau proof Palpha}, $\tau(t)$ is under $\mathbb{P}_\alpha$ a diffusion with generator $\frac{1}{2}\mathcal{L}_{a,1+2\alpha \sqrt{\gamma},2 b-2\alpha \sqrt{\gamma}}$ and therefore
\begin{align*}
 & e^{n\alpha \sqrt{\gamma}t}\mathbb{E}_\alpha \!\left(
  \prod_{i=1}^r \left( \frac{\cosh \tau_i(0)}{\cosh \tau_i(t)} \right)^{\alpha \sqrt{\gamma}}
  f(\tau(t))
\right) \\
=&e^{n\alpha \sqrt{\gamma}t}
\int_{\overline{\mathcal C}}
  \prod_{i=1}^r \left( \frac{\cosh \tau_i(0)}{\cosh \tau_i } \right)^{\alpha \sqrt{\gamma}} 
f(\tau)\, p^{a,1+2\alpha \sqrt{\gamma},2 b-2\alpha \sqrt{\gamma}}_t(\tau(0),\tau)dm_{a,1+2\alpha \sqrt{\gamma},2 b-2\alpha \sqrt{\gamma}}(\tau).
\end{align*}
On the other hand we have
\begin{align*}
& \mathbb{E} \left( \exp \left( -\frac{\alpha^2}{2}  \gamma \sum_{i=1}^r \int_0^t \tanh(\tau_i(s))^2\,ds\right) f(\tau(t))\right) \\
=&\int_{\overline{\mathcal C}} \mathbb{E} \left(   \left. \exp \left( -\frac{\alpha^2}{2}  \gamma \sum_{i=1}^r \int_0^t \tanh(\tau_i(s))^2\,ds\right)  \right|  \tau(t)=\tau\right) \,f(\tau) p^{a,1,2 b}_t(\tau(0),\tau)dm_{a,1,2 b}(\tau).
\end{align*}
Since this holds for every function $f$ we deduce
\begin{align*}
 & \mathbb{E} \left( \left. \exp \left( -\frac{\alpha^2}{2}  \gamma \sum_{i=1}^r \int_0^t \tanh(\tau_i(s))^2\,ds\right)  \right| \tau(t) 
 =\tau\right) \\
 =&e^{n\alpha \sqrt{\gamma}t} \prod_{i=1}^r \left( \frac{\cosh \tau_i(0)}{\cosh \tau_i } \right)^{\alpha \sqrt{\gamma}}   \frac{p^{a,1+2\alpha \sqrt{\gamma},2 b-2\alpha \sqrt{\gamma}}_t(\tau(0),\tau)}{p^{a,1,2 b}_t(\tau(0),\tau)} \frac{dm_{a,1+2\alpha \sqrt{\gamma},2 b-2\alpha \sqrt{\gamma}}(\tau)}{dm_{a,1,2 b}(\tau)}.
\end{align*}
The conclusion then follows after straightforward computations.
\end{proof}

\begin{remark}
    \label{area-relation-heat-kernels}
    Notice that the proof above also holds for $\alpha<0$ with a minor change. According to Proposition \ref{skew product area} $S(t),t\ge0$ and $-S(t),t\ge0$ have the same distribution. As a consequence, for any $\alpha\in\mathbb{R}$ we have proved the relation
    \[
        p^{a,1-2\alpha \sqrt{\gamma},2 b+2\alpha \sqrt{\gamma}}_t(\tau,\eta)
        =
        e^{2n\alpha \sqrt{\gamma}t}
        p^{a,1+2\alpha \sqrt{\gamma},2 b-2\alpha \sqrt{\gamma}}_t(\tau,\eta)
        \prod_{i=1}^r \big( 4 \cosh (\tau_i) \cosh (\eta_i)  \big)^{2\alpha \sqrt{\gamma}}
        .
    \]
    In Appendix~\ref{sec:Heckman-Opdam_Laplacians} we prove an integral representation of the heat-kernel under the assumption $k_1\ge1,k_2\ge0$ and $k_3\in\mathbb{R}$ such that $k_2+k_3\ge1$ holds. It follows from the above relation that an integral representation also holds in more generality. 
\end{remark}

\begin{remark}
    The closest antecedent to the above theorem is Theorem~3.3 in \cite{BaudoinDemniWang2023Stiefel}, which treats the stochastic area in the type~I non-compact Grassmannian. It corresponds in our notation to $\gamma=N$, $a=2$, $r=k$, $b=N-2k$. For the rank $r=1$ case, setting $\gamma=N$, we have $a=2$, $b=N-2$ and $n=N-1$. Using $\sinh(2\tau) = 2 \sinh(\tau) \cosh(\tau)$, and the relation between $dm_{2,1,2 (N-2)}$ and $dm_{2,1+2\alpha \sqrt{N},2 (N-2)-2\alpha \sqrt{N}}$, the above theorem allows one to write
    \begin{align*}
        \mathbb{E}
        \left[ 
            e^{i\alpha S(t)}
        \right]
         &=
        e^{(N-1)\alpha \sqrt{N}t}
        \cosh \left(\tau(0)\right)^{\alpha \sqrt{N}}
        \mathbb{E}_{2,1-2\alpha\sqrt{N},2(N-2)-2\alpha\sqrt{N}}
        \left[
            \cosh(\tau(t))^{-\alpha \sqrt N}
        \right]
    \end{align*}
    where $\mathbb{E}_{a,1-2\alpha\sqrt{N},2(N-2)-2\alpha\sqrt{N}}$ denote the expectation under the law of the diffusion with generator $\frac{1}{2} \mathcal{L}_{2,1-2\alpha\sqrt{N},2(N-2)-2\alpha\sqrt{N}}$.
    Under the substitution $\rho = \cosh(2\tau)$ and taking $\alpha^{BDW}=\alpha\sqrt{N}/2$, this is exactly the right hand side of Theorem~3.3 in \cite{BaudoinDemniWang2023Stiefel} specialized to the case $k=1$.
    Beyond the type~I case, i.e when $a\neq2$, one cannot use a Vandermonde transform as in \cite{BaudoinDemniWang2023Stiefel} to reduce the problem to  $r$ independent processes problem and therefore the Karlin-McGregor framework cannot be applied. The Jordan theoretic approach we developed treats all types uniformly, including the two exceptional domains. 
\end{remark}

\section{The Jordan determinant winding process in tube domains}

In this section we introduce and study the Jordan determinant winding process. This requires us to restrict ourselves to the class of tube type bounded symmetric domains. After some preliminaries, our main results are Theorem \ref{characteristic function dc log det} and Corollary \ref{Cauchy limit} which describes the Cauchy-law asymptotic limit.

\subsection{Definition and first properties}

Let $\Omega\subset V$ be as in Section~3, and assume throughout this section
that $\Omega$ is of tube type, equivalently $b=0$. We refer to the monograph \cite{FarautKoranyi1994} for an exposition of  the theory of tube type domains. Fix a Jordan frame $(e_1,\ldots,e_r)$ and write
\[
        e=e_1+\cdots+e_r .
\]
In the tube-type case,  the product
\[
        x\circ_e y=\frac12\{x,e,y\}
\]
makes $V$ into a unital Jordan algebra with unit $e$. We denote by
\[
        \mathrm{det}_{\mathcal J}(z):=\Delta(z,e)
\]
the Jordan determinant associated with this unital Jordan algebra, where
$\Delta$ is the generic determinant of the Hermitian Jordan triple system, see \cite[Part~V]{Book_complexdomains}.
The most important fact for us is that if
\[
        z=k\cdot\sum_{j=1}^r \lambda_j e_j
\]
 then one has
\[
        |\mathrm{det}_{\mathcal J}(z)|=\prod_{j=1}^r \lambda_j.
\]
We shall write
\[
        \mathcal J(z):=|\mathrm{det}_{\mathcal J}(z)|
        =\prod_{j=1}^r \lambda_j(z),
        \qquad
        \Omega^\star:=\{z\in\Omega:\mathcal J(z)>0\}.
\]
Thus $\Omega^\star=\Omega\setminus \mathcal Z$, where
\[
        \mathcal Z:=\{z\in\Omega:\mathrm{det}_J(z)=0\}.
\]
The function $\mathcal J$ is smooth on $\Omega^\star$. Writing $\log$ for a local branch of
the complex logarithm, we have
\[
    \log \mathrm{det}_{ \mathcal J} = \log |\mathrm{det}_{\mathcal J}| + i \arg(\mathrm{det}_{\mathcal J})
\]
so that a direct application of the Cauchy-Riemann equation to the holomorphic function $\mathrm{det}_{\mathcal J}$ allows us to define the $d^c$-log-determinant $\theta$ as
\[
    \theta 
    := 
    2 d^c \log \mathcal J
    =
    2 d^c \log |\mathrm{det}_{ \mathcal J}|
    = 
    d \arg (\mathrm{det}_{ \mathcal J})
\]
and $\theta$ defined this way is a non-exact closed form that does not depend on the choice of maximal tripotent $e$ that we made or the choice of the principal branch of the logarithm.

The goal of the section is to study the $d^c$ $\log$-determinant form process which is defined as the Stratonovich line integral
\[
A(t)=\int_{X[0,t]} \theta
\]
where $X(t)$ is a Brownian motion on $\Omega$.
Note that $\theta$ is only defined in the set $\Omega^{\star}$  but that the complement of this set is polar for the Brownian motion $X(t)$. Therefore, $A(t)$ is well-defined if $X(0) \in \Omega^{\star}$, which we will always assume in this section.

\begin{remark}[Geometric interpretation]
    By definition $\theta=d\arg(\mathrm{det}_{\mathcal J})$ is the pullback under 
    $\mathrm{det}_{\mathcal J}:\Omega^\star \to \mathbb{C}^\star$ of the standard angular form 
    $\mathbf{Im} \left( \frac{dz}{z}\right)$ on $\mathbb{C}^\star$. Consequently, $A(t)$ coincides with 
    the continuous lift to $\mathbb{R}$ of the angular displacement
    \[
        \arg \mathrm{det}_{ \mathcal J}(X(t)) - \arg \mathrm{det}_{ \mathcal J}(X(0)) \pmod{2\pi},
    \]
    that is, with the cumulative winding angle of the planar semimartingale 
    $\mathrm{det}_{ \mathcal J}(X(t)),t\ge0$ around the origin in $\mathbb{C}$. In $\Omega$, $A(t)$ 
    can thus be interpreted as the winding of the Brownian path around the 
    complex hypersurface $\mathcal Z$.
    One can also interpret $\theta$ topologically. Indeed, using  polar decomposition in a Jordan frame, one can see that the first de Rham integral cohomology group of $\Omega^\star$ is $\mathbb{Z}$ so that $\theta$ is a generator of this cohomology and thus can be interpreted as an index form as in \cite{baudoin2024}. 
\end{remark}

\begin{remark}\label{tube type ass}
The assumption that $\Omega$ is of tube type is essential for the winding
functional.  Outside the tube-type case, one may still form the
$K$-invariant function $\prod_{j=1}^r\lambda_j$, but it is no longer the
modulus of a holomorphic function on $V$. Accordingly,
$d^c\log\bigl(\prod_{j=1}^r\lambda_j\bigr)$ is not closed in general and
does not define a winding form. This is why the determinant
winding process is restricted to tube-type domains.
\end{remark}

\begin{lemma}\label{martingale representation for A}
Let $(X(t))_{t \ge 0}$ be a Brownian motion on $\Omega$, and let $\lambda_i(t)$, $1 \le i \le r$, be the spectral values of $X(t)$. Then, there exists a Brownian motion $(\alpha(t))_{t \ge 0}$ in $\mathbb{R}^r$ such that 
\[
    A(t) = 
        \frac{1}{\sqrt{\gamma}} 
        \sum_{i=1}^{r} 
        \int_{0}^{t} \frac{1-\lambda_i^2(s)}{\lambda_i(s)} d\alpha_i(s)
        .
\]
\end{lemma}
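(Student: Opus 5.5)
The plan is to mirror the proof of Corollary~\ref{martingale representation for S}, replacing the potential $\Phi$ by $\Psi:=2\log\mathcal J=2\sum_{j=1}^r\log\lambda_j$, so that $\theta=d^c\Psi$ on $\Omega^\star$. Theorem~\ref{martingale property} applies on the open set $\Omega^\star$. The Brownian motion started in $\Omega^\star$ stays in $\Omega^\star\cap\mathrm{reg}(\Omega)$ for all $t>0$ by Theorem~\ref{spectral diffusion}, since $\tau_r$ never hits $0$. Hence $A$ is a local martingale with no drift term.

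\textbf{Step 1: the gradient of $\Psi$.} By Lemma~\ref{grad lambda}, at $e=\sum_i\lambda_ie_i$ with $e\in\mathrm{reg}(\Omega)$,
\[
\nabla\Psi(e)=\sum_{i=1}^r\frac{2}{\lambda_i}\nabla\lambda_i(e)=\frac{2}{\gamma}\sum_{i=1}^r\frac{(1-\lambda_i^2)^2}{\lambda_i}e_i .
\]
Transporting by $K$-equivariance, exactly as in Step~1 of Corollary~\ref{martingale representation for S}, gives $\nabla\Psi(k\cdot e)=k\cdot\nabla\Psi(e)$.

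\textbf{Step 2: the form $\theta$.} As in Step~2 of Corollary~\ref{martingale representation for S}, we have $\theta_z(v)=\frac12 g_z(J\nabla\Psi,v)=-\frac12\mathbf{Im}\,h_z(\nabla\Psi(z),v)$. At $z=k\cdot e$, using $B(e,e)^{-1}e_i=(1-\lambda_i^2)^{-2}e_i$, this becomes
\[
\theta_z(v)=-\frac1\gamma\sum_{i=1}^r\frac{1}{\lambda_i}\,\mathbf{Im}\bigl(k\cdot e_i\,\big|\,v\bigr).
\]

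\textbf{Step 3: the line integral.} Substituting $dX=B(X,X)^{1/2}d\beta$, the martingale part of $A$ is $\int_0^t\theta_{X(s)}(dX(s))$. We then use three facts: the $(\cdot|\cdot)$-symmetry of $B$, the relation $B(e,e)^{1/2}e_i=(1-\lambda_i^2)e_i$, and the identity $W(t)=\int_0^tk(s)^{-1}\cdot d\beta(s)$ defining a complex Brownian motion, by isometry of $K$. These give
\[
A(t)=-\frac1\gamma\sum_{i=1}^r\int_0^t\frac{1-\lambda_i^2(s)}{\lambda_i(s)}\,\mathbf{Im}\bigl(e_i\,\big|\,dW(s)\bigr).
\]
With the same Brownian motion as before, $\alpha_i=-\gamma^{-1/2}\mathbf{Im}(e_i|W)$, this yields exactly the claimed formula. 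The $\alpha_i$ are independent standard Brownian motions because $(e_i|e_j)=\gamma\delta_{ij}$, so Lévy's characterization applies.

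\textbf{Main obstacle.} The one step needing care is the passage from the Stratonovich line integral to an Itô integral with no drift, since $\theta$ is singular on $\mathcal Z$. I would handle it by localization. Let $T_\varepsilon$ be the exit time from $\{\mathcal J>\varepsilon\}$. Apply Theorem~\ref{martingale property} (through Lemma~\ref{divergence free}, whose argument is local) on $\Omega^\star$ up to $T_\varepsilon$. Then let $\varepsilon\to0$, using that $T_\varepsilon\to\infty$ a.s.\ by polarity of $\mathcal Z$ from Theorem~\ref{spectral diffusion}. The stochastic integrals are well defined because the integrand is locally bounded along paths.
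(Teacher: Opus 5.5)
Your proposal is correct and follows the paper's proof essentially step for step. You compute the gradient of $\log\mathcal J$ via Lemma~\ref{grad lambda} and $K$-equivariance, identify $\theta_z(v)=-\frac{1}{\gamma}\sum_i \lambda_i^{-1}\,\mathbf{Im}(k\cdot e_i\,|\,v)$ through $\theta^\sharp=J\nabla\log\mathcal J$, then substitute $dX=B^{1/2}d\beta$ and apply L\'evy's characterization with $\alpha_i=-\gamma^{-1/2}\mathbf{Im}(e_i\,|\,W)$, exactly as the paper does. The only differences are cosmetic: you work with $\Psi=2\log\mathcal J$ directly rather than introducing the auxiliary operator $G$, and you localize explicitly at the exit times of $\{\mathcal J>\varepsilon\}$, which makes rigorous a point the paper covers only by citing the polarity of $\mathcal Z$.
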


\begin{proof}
    We follow a line of reasoning similar to that used in the proof of Corollary \ref{martingale representation for S}.
    
\smallskip
\noindent\textbf{Step 1: Computation of $\nabla \log | \mathrm{det}_{\mathcal J} |$.} As $\log |\mathrm{det}_{\mathcal J}|$ is smooth and $\mathrm{reg}(\Omega)$ is dense in $\Omega$, it suffices to compute $\nabla \log | \mathrm{det}_{ \mathcal J} |$ on the first set.
    Note that by definition
    \[
    \log | \mathrm{det}_{ \mathcal J} |(z)= \sum_{j=1}^r \log \lambda_j(z)
    ,
    \]
    hence, writing $e(z)=\sum_{i=1}^r \lambda_i(z)e_i$ and using \cref{grad lambda}, we obtain
    \[
    \nabla \log |\mathrm{det}_{ \mathcal J}| \left(e(z)\right)
        =
        \sum_{i=1}^{r} \frac{1}{\lambda_i(z)}\nabla\lambda_i\left(e(z)\right)
        =
         \sum_{i=1}^{r} \frac{(1-\lambda_i^2(z))^2}{\lambda_i(z) \gamma}e_i
         .
    \]
    For $e=\sum_{i=1}^r \lambda_ie_j$ we define the operator $G$ by
    \begin{align*}
        G(e,e)_{\mid V_{ij}}&= \frac{1}{\gamma} \frac{(1-\lambda_i^2)(1-\lambda_j^2)}{\lambda_i \lambda_j}\Id, \quad &&(i<j)
        \\
        G(e,e)_{\mid V_{i0}}&= \frac{1}{\gamma} \frac{(1-\lambda_i^2)}{\lambda_i}\Id,
        \\
        G(e,e)_{\mid V_{ii}}&= \frac{1}{\gamma} \frac{(1-\lambda_i^2)^2}{\lambda_i^2}\Id,
        \\
        G(k\cdot e,k\cdot e)&= k\,G(e,e)\,k^{-1}, \quad &&\forall k \in K.
    \end{align*}
    As a consequence we can write
    \[
        \nabla \log |\mathrm{det}_{ \mathcal J}| \left(e(z)\right) = G\left(e(z), e(z)\right)e(z).
    \]
    Since $\log |\mathrm{det}_{ \mathcal J}|$ is $K$-equivariant, i.e. $\log |\mathrm{det}_{ \mathcal J}|(k\cdot z)=\log |\mathrm{det}_{ \mathcal J}|(z)$, we finally get
    \[
        \nabla \log |\mathrm{det}_{ \mathcal J} |(z)
        =
        k(z) \nabla \log |\mathrm{det}_{ \mathcal J} |\left(e(z)\right)
        =  
        G(z,z)z
    \]
    where $k(z)\in K$ is such that $z=k(z)\cdot e(z)$.

    \smallskip
\noindent\textbf{Step 2: Computation of $\theta_z(v)$.} From the proof of Theorem \ref{martingale property} we know that 
    \[
        \theta^{\#}
        =
        J \nabla \log |\mathrm{det}_{ \mathcal J}|
    \]
    As $g= \mathbf{Re}(h)$ is $J$ invariant we can write
    \begin{align*}
        \theta_z(v)
        =
        g_z\left(J\nabla \log | \mathrm{det}_{ \mathcal J}|(z),v\right)
        =
        - \mathbf{Re}\,h_z\left(\nabla \log |\mathrm{det}_{ \mathcal J}|(z),Jv\right)
        =
        - \mathbf{Im}\left(
            B(z,z)^{-1} G(z,z)z
            \mid
            v
        \right)
        .
    \end{align*}
    Note that for $z=k\cdot e$ where $e=\sum_{i=1}^r\lambda_i e_i$ we have
    \[
        B(z,z)^{-1}G(z,z)\,z
        =
        k\,B\left(e,e\right)^{-1}k^{-1}\, k\,G\left( e, e\right) k^{-1}\,(k\cdot e) 
        = 
        k\,B\left(e,e\right)^{-1}\,G\left( e, e\right) e
        =
        \sum_{i=1}^r \frac{1}{\gamma \lambda_i}k\cdot e_i
    \]
    so that finally
    \[
    \theta_z(v) 
        = 
        -\frac{1}{\gamma}
        \sum_{i=1}^{r}
        \frac{1}{\lambda_i(z)}
        \mathbf{Im}
        \left(
            k(z) \cdot e_i \mid v
        \right)
        .
    \]
    \smallskip
\noindent\textbf{Step 3: Computation of $A$.} Recall that we write $X(t)=k(t)\cdot e(t)$ where $k(t)\in K,e(t)= \sum_{i=1}^r \lambda_i(t)e_i$ the polar decomposition of $X(t)$.
    Applying Theorem \ref{martingale property}, we get that $A$ is a local martingale and therefore
    \[
        A(t)
        =
        \int_{X[0,t]} \theta
        =
        -\frac{1}{\gamma} \int_{0}^{t} 
        \sum_{i=1}^{r}
        \frac{1}{\lambda_i(s)}
        \mathbf{Im}\left(
            k(s)\cdot e_i \mid dX(s)
        \right)
        .
    \]
    From Theorem \ref{Theorem EDS}, one has $dX(s)=B(X(s),X(s))^{1/2}d \beta(s)$, therefore one obtains by $(\cdot | \cdot)$-symmetry of the Bergman operator

    \begin{align*}
        A(t) 
        &=
        -\frac{1}{\gamma} \int_{0}^{t} 
        \sum_{i=1}^{r}
        \frac{1}{\lambda_i(s)}
        \mathbf{Im}\left(
            k(s)\cdot e_i \mid B(X(s),X(s))^{1/2}d \beta(s)
        \right)
        \\
        &=
        -\frac{1}{\gamma} \int_{0}^{t} 
        \sum_{i=1}^{r}
        \frac{1-\lambda_i^2(s)}{\lambda_i(s)}
        \mathbf{Im}\left(
            e_i \mid k^{-1}(s) \cdot d\beta (s)
        \right) 
    \end{align*}
    where we used that $k$ acts isometrically. This last fact also yields that the process $W(t):=\int_0^t k(s)^{-1} \cdot d\beta(s)$ is a complex Brownian motion on $(V,(\cdot |\cdot))$. We get then
    \[
        A(t)
        =
        -\frac{1}{\gamma} \int_{0}^{t} 
        \sum_{i=1}^{r}
        \frac{1-\lambda_i^2(s)}{\lambda_i(s)}
        \mathbf{Im}\left(
            e_i \mid dW (s)
        \right)
    \]
    and the conclusion follows thanks to the Lévy's characterization of Brownian motion by defining
    \[
    \alpha_i(t)=-\frac{1}{\sqrt{\gamma}} \mathbf{Im} \left(  e_i  | W(t) \right).
    \]
\end{proof}

By contrast to the stochastic area process, the representation of the winding process involves integrands $(1-\lambda_i^2)/\lambda_i$ blowing up near the boundaries of the Weyl chamber. This suggests that the Gaussian martingale-CLT argument used for the area process is not appropriate here.

\subsection{Characteristic function of the Jordan determinant winding process}

Relying on the representation of the winding process $A$ derived in the last subsection, we now derive a formula for the characteristic function of $A(t)$ for any $t\ge0$.

\begin{proposition}\label{skew product log determinant}
Let $(X(t))_{t \ge 0}$ be a Brownian motion on $\Omega$, and let $\lambda_i(t)$, $1 \le i \le r$, denote the spectral values of $X(t)$. Let $(A(t))_{t \ge 0}$ be the associated $d^c$ $\log$-determinant form process, and set $\tau_i(t)=\arctanh(\lambda_i(t))$.  Then the process
\[
(\tau(t), A(t))_{t \ge 0}
\]
is a diffusion with generator
\[
\frac{1}{2}\mathcal{L}_{a,1,0}
\;+\;
\frac{2}{\gamma}
\sum_{i=1}^{r}
    \sinh
    \left(
    2\tau_i
    \right)^{-2} \frac{\partial^2}{\partial s^2}
.
\]
where $\mathcal{L}_{a,1,0}$ is given by \eqref{radial generator}.

\end{proposition}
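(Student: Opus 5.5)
The plan mirrors the proof of Proposition~\ref{skew product area}. By Theorem~\ref{spectral diffusion} with $b=0$, $\tau(t)$ is a diffusion with generator $\frac12\mathcal L_{a,1,0}$ driven by the Brownian motion $\beta$ in \eqref{SDE tau}. By Lemma~\ref{martingale representation for A}, $A$ is a continuous local martingale, $A(t)=\frac{1}{\sqrt\gamma}\sum_i\int_0^t \frac{1-\lambda_i^2}{\lambda_i}\,d\alpha_i$, so that
\[
\langle A\rangle(t)=\frac1\gamma\sum_{i=1}^r\int_0^t\frac{(1-\lambda_i(s)^2)^2}{\lambda_i(s)^2}\,ds .
\]
With $\lambda_i=\tanh\tau_i$ we have
\[
\frac{1-\lambda_i^2}{\lambda_i}=\frac{1}{\sinh\tau_i\cosh\tau_i}=\frac{2}{\sinh(2\tau_i)},
\]
so $d\langle A\rangle=\frac4\gamma\sum_i\sinh(2\tau_i)^{-2}dt$. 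Half of this coefficient is exactly the $\frac{2}{\gamma}\sum_i\sinh(2\tau_i)^{-2}\partial_s^2$ term of the generator. Since $A$ has no drift, the joint generator of $(\tau,A)$ is the claimed operator provided the cross variations vanish: $\langle\tau_i,A\rangle=0$ for all $i$.

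To show $\langle\tau_i,A\rangle=0$, note that it is equivalent to $\langle\lambda_i,A\rangle=0$. Since $X(t)\in\mathrm{reg}(\Omega)\cap\Omega^\star$ for $t>0$ almost surely, we can write
\[
d\langle\lambda_i,A\rangle=g\bigl(\nabla\lambda_i,\theta^\sharp\bigr)(X)\,dt ,
\]
where $\theta^\sharp=J\nabla\log\mathcal J$ by \eqref{dc formula}. So it suffices to show that $\mathbf{Im}\,h(\nabla\lambda_i,\nabla\log\mathcal J)=0$ on $\mathrm{reg}(\Omega)\cap\Omega^\star$. By $K$-invariance we reduce to points $e=\sum_j\lambda_je_j$. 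There Lemma~\ref{grad lambda} and Step~1 of Lemma~\ref{martingale representation for A} give
\[
\nabla\lambda_i=\frac1\gamma(1-\lambda_i^2)^2e_i,\qquad \nabla\log\mathcal J=\sum_j\frac{(1-\lambda_j^2)^2}{\gamma\lambda_j}e_j .
\]
Combined with $h_e(e_i,e_j)=\gamma(1-\lambda_i^2)^{-2}\delta_{ij}$, the Hermitian product is real, so the cross variation vanishes.

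The Markov/diffusion property of the pair then follows as before: the pair solves an SDE system driven by the $r$-dimensional Brownian motion $\beta$ together with the Brownian motion $\alpha$. Alternatively, one applies Itô's formula to $f(\tau(t),A(t))$ for smooth $f$. Using the computed brackets, $f(\tau,A)-\int_0^t\mathcal G f(\tau,A)\,ds$ is a local martingale, where $\mathcal G$ is the stated operator. Since its coefficients depend only on $(\tau,s)$, the martingale problem identifies $(\tau,A)$ as the diffusion with generator $\mathcal G$.

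The main point needing care is that the coefficients $\sinh(2\tau_i)^{-2}$ blow up on the wall $\tau_r=0$. We must use that $\tau$ never hits the walls for $t>0$ (Step~3 of Theorem~\ref{spectral diffusion}, valid since $b=0$ gives $k_{e_i}+k_{2e_i}=1/2$), and that $X(0)\in\Omega^\star$, so that everything is smooth along the path and all integrals are locally finite.
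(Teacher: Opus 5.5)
Your proof is correct relative to the paper's earlier results, and it is essentially the paper's own argument: $\langle A\rangle$ is read off from Lemma~\ref{martingale representation for A} using $(1-\tanh^2\tau)/\tanh\tau=2/\sinh(2\tau)$, and $\langle\tau_i,A\rangle=0$ is obtained exactly as in Proposition~\ref{skew product area}, since at $e=\sum_j\lambda_je_j$ both $\nabla\lambda_i$ and $\nabla\log\mathcal J$ are real combinations of the $h_e$-orthogonal vectors $e_j$; your remarks on the martingale problem and on $\tau$ not hitting the wall $\tau_r=0$ when $b=0$ only make explicit what the paper leaves implicit.

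One caveat, inherited from the paper rather than introduced by you: the ingredients you use for the cross variation, Lemma~\ref{grad lambda} and $h_e(e_i,e_i)=\gamma(1-\lambda_i^2)^{-2}$, give $d\langle\tau_i\rangle=\|\nabla\tau_i\|_g^2\,dt=\gamma^{-1}\,dt$, whereas Step~1 of Theorem~\ref{spectral diffusion} asserts $dt$ (it tacitly takes $(e_i|e_i)=1$, although the trace form gives $(e_i|e_i)=\gamma$). So with the paper's normalization the radial part should be $\frac{1}{2\gamma}\mathcal L_{a,1,0}$, and the relative weight of the two terms of the generator is off by a factor $\gamma$ in your write-up and in the paper's alike.
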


\begin{proof}
    Using Lemma \ref{martingale representation for A}, a simple computation yields
    \[
        \langle A \rangle(t)
        =
        \frac{1}{\gamma}
        \sum_{i=1}^{r} \int_{0}^{t}
            \left(
                \frac{1-\lambda^2_i(s)}{\lambda_i(s)}
            \right)^2
        ds
        .
    \]
    Thanks to the identity
    \[
        2\coth(2t)
        = \tanh(t) + \coth(t)
    \]
    one can derive
    \[
        \frac{1-\tanh^2(t)}{\tanh(t)} 
        = 2 \left( 
            \coth(t) - \coth(2t)\right)
    \]
    and finally
    \[
        \left( \frac{1-\tanh^2(t)}{\tanh(t)}  \right)^2 = 4 \coth^2(2t) -4
        =
        4
        \sinh
            \left(
            2t
            \right)^{-2}
    \]
    whence
    \[
         \langle A \rangle(t)
        =
        \frac{4}{\gamma}
        \sum_{i=1}^{r} \int_{0}^{t}
            \sinh
            \left(
            2\tau_i
            \right)^{-2}
        ds
        .
    \]
    To conclude, it suffices to prove that for every $i=1,\dots,r$,
    \[
        \langle
            \tau_i
            ,
            A
        \rangle
        =
        0
    \]
    One can do so by reproducing the proof of Proposition \ref{skew product area} with minor changes. Indeed, the argument presented there still holds since for $e= \sum_{i=1}^{r}\lambda_i\,e_i$ where $\lambda_i>0,i=1,\dots,r$, $\nabla \log |\mathrm{det}_{ \mathcal J}|(e)$ like $\nabla\Phi(e)$ belongs to the span of $(e_i,i=1,\dots,r)$.
\end{proof}

We are now ready for the characteristic function formula.

\begin{theorem}
\label{characteristic function dc log det}
For every \(t>0\), \(\alpha>0\), and \(\tau\in\bar{\mathcal C}\),
\[
\mathbb E\left(e^{i\alpha A(t)}\mid \tau(t)=\tau\right)
=
\mathcal A_\alpha(t,\tau)
=
\mathcal B_\alpha(t,\tau),
\]
where
\begin{align*}
\mathcal A_\alpha(t,\tau)
&=
e^{\frac{2\alpha^2rt}{\gamma}}
\mathbb E\left(
\left.
\exp\left\{
-\frac{2\alpha^2}{\gamma}
\sum_{i=1}^r\int_0^t \coth^2(2\tau_i(s))\,ds
\right\}
\right| \tau(t)=\tau
\right)
\\
&=
e^{
\frac{\alpha}{\sqrt\gamma}
r\left(
2+\frac{2\alpha}{\sqrt\gamma}+a(r-1)
\right)t
}
\prod_{i=1}^r
\left(
\sinh(2\tau_i(0))\sinh(2\tau_i)
\right)^{\frac{\alpha}{\sqrt\gamma}}
\frac{
p_t^{a,\,1+2\alpha/\sqrt\gamma,\,0}(\tau(0),\tau)
}{
p_t^{a,\,1,\,0}(\tau(0),\tau)
},
\end{align*}
and
\begin{align*}
\mathcal B_\alpha(t,\tau)
&=
\mathbb E\left(
\left.
\exp\left\{
-\frac{2\alpha^2}{\gamma}
\sum_{i=1}^r\int_0^t \sinh^{-2}(2\tau_i(s))\,ds
\right\}
\right| \tau(t)=\tau
\right)
\\
&=
2^{-\frac{2\alpha r}{\sqrt\gamma}}
\prod_{i=1}^r
\left(
\tanh(\tau_i(0))\tanh(\tau_i)
\right)^{\frac{\alpha}{\sqrt\gamma}}
\frac{
p_t^{a,\,1-2\alpha/\sqrt\gamma,\,4\alpha/\sqrt\gamma}(\tau(0),\tau)
}{
p_t^{a,\,1,\,0}(\tau(0),\tau)
}.
\end{align*}
Here \(p_t^{k_1,k_2,k_3}\) denotes the heat kernel of
\(\frac12\mathcal L_{k_1,k_2,k_3}\); see Appendix \ref{sec:Heckman-Opdam_Laplacians}.
\end{theorem}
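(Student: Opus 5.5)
The plan is to mirror the proof of Theorem~\ref{characteristic function area}, running it twice with two different Girsanov densities. First, the skew-product structure of Proposition~\ref{skew product log determinant} lets us write $A(t)=W(\langle A\rangle(t))$, with $W$ a Brownian motion independent of $\tau$. Hence
\[
\mathbb E\bigl(e^{i\alpha A(t)}\mid\tau(t)=\tau\bigr)=\mathbb E\Bigl(\exp\Bigl\{-\tfrac{2\alpha^2}{\gamma}\textstyle\sum_i\int_0^t\sinh^{-2}(2\tau_i(s))\,ds\Bigr\}\Bigm|\tau(t)=\tau\Bigr).
\]
The identity $\sinh^{-2}=\coth^2-1$ then gives the first line of both $\mathcal A_\alpha$ and $\mathcal B_\alpha$ immediately.

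For $\mathcal A_\alpha$, set $c=\alpha/\sqrt\gamma$ and apply It\^o's formula to $\sum_i\ln\sinh(2\tau_i)$ along \eqref{SDE tau} with $b=0$. The martingale part is $2\sum_i\coth(2\tau_i)\,d\beta_i$. The drift is
\[
\sum_i\Bigl[2\coth^2(2\tau_i)-2\sinh^{-2}(2\tau_i)+a\coth(2\tau_i)\textstyle\sum_{j\ne i}\bigl(\coth(\tau_i-\tau_j)+\coth(\tau_i+\tau_j)\bigr)\Bigr],
\]
where the first two terms combine to $2$. The pairwise sum should be handled by an analogue of Lemma~\ref{computation coth}. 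Symmetrizing in $(i,j)$ and using the identity $\coth(x)\coth(y)+1=\coth(x+y)(\coth x+\coth y)$ with $x=2\tau_i$, $y=\pm 2\tau_j$ (together with the duplication formulas), I expect $\sum_i\coth(2\tau_i)\sum_{j\neq i}(\dots)$ to reduce to the constant $r(r-1)$, up to normalization. Then
\[
D(t)=\exp\Bigl(2c\textstyle\sum_i\int\coth(2\tau_i)\,d\beta_i-2c^2\sum_i\int\coth^2(2\tau_i)\,ds\Bigr)
\]
can be rewritten as $\prod_i(\sinh 2\tau_i(t)/\sinh 2\tau_i(0))^{c}$ times $e^{-c(2+a(r-1))rt}$ times $\exp\{-2c^2\sum_i\int\coth^2(2\tau_i)\,ds\}$. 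Under the Girsanov measure the drift gains $2c\coth(2\tau_i)$, so $\tau$ has generator $\tfrac12\mathcal L_{a,1+2c,0}$. Finally, I compare the two heat-kernel expressions as in the area proof. The density ratio $dm_{a,1+2c,0}/dm_{a,1,0}=\prod_i\sinh(2\tau_i)^{2c}$ combines with the factor $\sinh(2\tau_i)^{-c}$, which produces the symmetric product $(\sinh 2\tau_i(0)\sinh2\tau_i)^{c}$. Multiplying by $e^{2\alpha^2rt/\gamma}=e^{2c^2rt}$ gives the stated exponent $cr(2+2c+a(r-1))t$.

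For $\mathcal B_\alpha$, use $\sum_i\ln\tanh\tau_i$ instead. Since $(\ln\tanh)'(x)=2/\sinh(2x)$, the martingale part is $2\sum_i\sinh^{-1}(2\tau_i)\,d\beta_i$, and the Girsanov density has quadratic term exactly $2c^2\sum_i\int\sinh^{-2}(2\tau_i)\,ds$. The added drift $2c/\sinh(2\tau_i)=c(\coth\tau_i-\tanh\tau_i)=c(2\coth\tau_i-2\coth2\tau_i)$ shifts the multiplicities to $(a,1-2c,4c)$. The Itô drift of $\ln\tanh\tau_i$ should be a pure constant times $t$ plus terms that cancel after a symmetrization analogous to the above. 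Because no exponential time factor appears in the statement, I expect this constant to be zero, and that must be checked. The density ratio $\prod_i\sinh(\tau_i)^{4c}\sinh(2\tau_i)^{-2c}=\prod_i 2^{-2c}\tanh(\tau_i)^{2c}$ then gives the $2^{-2cr}$ prefactor and the symmetric tanh product.

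The main obstacles are the two pairwise coth identities, which must yield constants, and the true-martingale property. The latter is easy for $\mathcal B_\alpha$, since the density is bounded by $\prod_i(\tanh\tau_i(t)/\tanh\tau_i(0))^c\le C$. For $\mathcal A_\alpha$ one has $\sinh(2\tau_1)\le e^{2\tau_1}$, so I would invoke Lemma~\ref{exponential integrability} and Doob's inequality exactly as in \eqref{integrability}. Alternatively, $\mathcal A_\alpha=\mathcal B_\alpha$ can be obtained a posteriori through the remark's heat-kernel relation, but I would prove each formula directly.
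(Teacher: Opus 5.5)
Your proposal is correct and follows the paper's proof step for step. It uses the skew-product reduction via Proposition~\ref{skew product log determinant} and the same two Girsanov densities built from $\sum_i\log\sinh(2\tau_i)$ and $\sum_i\log\tanh\tau_i$. The latter indeed has zero drift, since the interaction terms $a/(\sinh(\tau_i-\tau_j)\sinh(\tau_i+\tau_j))$ cancel in pairs. The true-martingale checks and the heat-kernel density comparison are also the same as in the paper.

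The only slip is in the pairwise sum, which is the paper's Lemma~\ref{computation coth bis}. Your identity $\coth(x+y)(\coth x+\coth y)=1+\coth x\coth y$ does prove it, but with $x=\tau_i-\tau_j$ and $y=\tau_i+\tau_j$ (so $x+y=2\tau_i$), not $x=2\tau_i$, $y=\pm2\tau_j$. With that substitution, $\sum_{i\neq j}\coth(\tau_i-\tau_j)\coth(\tau_i+\tau_j)=0$ by antisymmetry, and the sum equals $r(r-1)$.
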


\begin{proof}
We split the proof into three steps. First we reduce the characteristic
function of \(A(t)\) to exponential functionals of the radial process. Then we
introduce two Girsanov transforms. Finally, we identify the resulting
conditional expectations by comparing heat-kernel densities.

\medskip

\noindent\textbf{Step 1: Reduction to radial exponential additive functionals.}
By Proposition \ref{skew product log determinant}, the joint process \((\tau(t),A(t))\)
has no mixed second-order terms between the radial variables and the
\(A\)-coordinate and there is a skew-product structure.
Hence, there exists a one-dimensional Brownian motion $W$ independent from the process $\tau(s),s\ge0$ such that
\[
    \mathbb{E}\left( e^{i\alpha A(t)} \mid \tau(t)= \tau \right)
        =\mathbb{E}\left( e^{i\alpha 
        W\left( 
            \langle A \rangle(t)
        \right)}
        \mid \tau(t)= \tau \right)
        =
            \mathbb{E}\left( 
            e^{ 
             -\frac{\alpha^2}{2} \langle A \rangle(t)
        }
        \mid \tau(t)= \tau \right)
    \]
and hence
\[
\mathbb E\left(e^{i\alpha A(t)}\mid \tau(t)=\tau\right)
=
\mathbb E\left(
\left.
\exp\left\{
-\frac{2\alpha^2}{\gamma}
\sum_{i=1}^r\int_0^t \sinh^{-2}(2\tau_i(s))\,ds
\right\}
\right| \tau(t)=\tau
\right).
\]
Using
\[
\sinh^{-2}(2x)=\coth^2(2x)-1,
\]
we also obtain
\[
\mathbb E\left(e^{i\alpha A(t)}\mid \tau(t)=\tau\right)
=
e^{\frac{2\alpha^2rt}{\gamma}}
\mathbb E\left(
\left.
\exp\left\{
-\frac{2\alpha^2}{\gamma}
\sum_{i=1}^r\int_0^t \coth^2(2\tau_i(s))\,ds
\right\}
\right| \tau(t)=\tau
\right).
\]
This proves the first identities in the definitions of
\(\mathcal A_\alpha\) and \(\mathcal B_\alpha\).

\medskip

\noindent
\textbf{Step 2: The two Girsanov transforms.}
In the tube-type case, the radial process \(\tau(t)\) has generator
\(\frac12\mathcal L_{a,1,0}\). Equivalently, for \(i=1,\dots,r\),
\begin{align}
\label{SDE tau proof 2}
d\tau_i(t)
&=
d\beta_i(t)
+
\left(
\coth(2\tau_i(t))
+
\frac{a}{2}
\sum_{j\neq i}
\bigl(
\coth(\tau_i(t)-\tau_j(t))
+
\coth(\tau_i(t)+\tau_j(t))
\bigr)
\right)dt,
\end{align}
where \(\beta\) is an \(\mathbb R^r\)-valued Brownian motion. Define the exponential local martingales
\[
D_\alpha(t)
=
\exp\left(
\frac{2\alpha}{\sqrt\gamma}
\sum_{i=1}^r
\int_0^t
\coth(2\tau_i(s))\,d\beta_i(s)
-
\frac{2\alpha^2}{\gamma}
\sum_{i=1}^r
\int_0^t
\coth^2(2\tau_i(s))\,ds
\right),
\]
and
\[
\widetilde D_\alpha(t)
=
\exp\left(
\frac{2\alpha}{\sqrt\gamma}
\sum_{i=1}^r
\int_0^t
\frac{d\beta_i(s)}{\sinh(2\tau_i(s))}
-
\frac{2\alpha^2}{\gamma}
\sum_{i=1}^r
\int_0^t
\sinh^{-2}(2\tau_i(s))\,ds
\right).
\]

We next rewrite these martingales in a form suitable for the heat-kernel
comparison. By It\^o's formula and Lemma \ref{computation coth bis},
\[
\sum_{i=1}^r d\log\sinh(2\tau_i(t))
=
r\bigl(2+a(r-1)\bigr)\,dt
+
2\sum_{i=1}^r
\coth(2\tau_i(t))\,d\beta_i(t).
\]
Similarly,
\[
\sum_{i=1}^r d\log\tanh(\tau_i(t))
=
2\sum_{i=1}^r
\frac{d\beta_i(t)}{\sinh(2\tau_i(t))}.
\]
Consequently,
\begin{align*}
D_\alpha(t)
&=
e^{
-\frac{\alpha}{\sqrt\gamma}
r\bigl(2+a(r-1)\bigr)t
}
\exp\left(
-\frac{2\alpha^2}{\gamma}
\sum_{i=1}^r
\int_0^t
\coth^2(2\tau_i(s))\,ds
\right)
\\
&\qquad\qquad\qquad\qquad\times
\prod_{i=1}^r
\left(
\frac{\sinh(2\tau_i(t))}
     {\sinh(2\tau_i(0))}
\right)^{\frac{\alpha}{\sqrt\gamma}},
\end{align*}
and
\[
\widetilde D_\alpha(t)
=
\exp\left(
-\frac{2\alpha^2}{\gamma}
\sum_{i=1}^r
\int_0^t
\sinh^{-2}(2\tau_i(s))\,ds
\right)
\prod_{i=1}^r
\left(
\frac{\tanh(\tau_i(t))}
     {\tanh(\tau_i(0))}
\right)^{\frac{\alpha}{\sqrt\gamma}}.
\]

We now check that these local martingales are true martingales. Since
\(X(0)\in\Omega^\star\), all quantities
\(\sinh(2\tau_i(0))\) and \(\tanh(\tau_i(0))\) are nonzero. Hence, for every
fixed \(t>0\), there exist constants \(C,\widetilde C>0\) such that, for
\(0\le s\le t\),
\[
D_\alpha(s)
\le
C\exp\left(
\frac{2\alpha r}{\sqrt\gamma}\tau_1(s)
\right),
\qquad
\widetilde D_\alpha(s)
\le
\widetilde C.
\]
By \cref{exponential integrability} and Doob's maximal inequality,
\[
\mathbb E\left[
\sup_{0\le s\le t}
\exp\left(
\frac{2\alpha r}{\sqrt\gamma}\tau_1(s)
\right)
\right]
<+\infty.
\]
Thus
\[
\mathbb E\left[\sup_{0\le s\le t}D_\alpha(s)\right]<+\infty,
\qquad
\mathbb E\left[\sup_{0\le s\le t}\widetilde D_\alpha(s)\right]<+\infty.
\]
Therefore \(D_\alpha\) and \(\widetilde D_\alpha\) are true martingales. Let \(\mathbb P_\alpha\) and \(\widetilde{\mathbb P}_\alpha\) be the probability
measures defined on \(\mathcal F_t\) by
\[
\frac{d\mathbb P_\alpha}{d\mathbb P}\bigg|_{\mathcal F_t}
=
D_\alpha(t),
\qquad
\frac{d\widetilde{\mathbb P}_\alpha}{d\mathbb P}\bigg|_{\mathcal F_t}
=
\widetilde D_\alpha(t).
\]
By Girsanov's theorem,
\[
\beta_i^\alpha(t)
=
\beta_i(t)
-
\frac{2\alpha}{\sqrt\gamma}
\int_0^t
\coth(2\tau_i(s))\,ds,
\qquad i=1,\dots,r,
\]
is an \(\mathbb R^r\)-Brownian motion under \(\mathbb P_\alpha\), while
\[
\widetilde\beta_i^\alpha(t)
=
\beta_i(t)
-
\frac{2\alpha}{\sqrt\gamma}
\int_0^t
\frac{ds}{\sinh(2\tau_i(s))},
\qquad i=1,\dots,r,
\]
is an \(\mathbb R^r\)-Brownian motion under
\(\widetilde{\mathbb P}_\alpha\). It follows from \eqref{SDE tau proof 2}
that, under \(\mathbb P_\alpha\), the process \(\tau\) has generator
\[
\frac12\mathcal L_{a,\,1+2\alpha/\sqrt\gamma,\,0}.
\]
Using the identity
\[
\coth u-\coth(2u)=\frac{1}{\sinh(2u)},
\]
we also see that, under \(\widetilde{\mathbb P}_\alpha\), the process \(\tau\)
has generator
\[
\frac12\mathcal L_{a,\,1-2\alpha/\sqrt\gamma,\,4\alpha/\sqrt\gamma}.
\]

\medskip

\noindent
\textbf{Step 3: Identification by heat-kernel comparison.}
Let \(f\) be a bounded Borel function on \(\bar{\mathcal C}\). From the
previous display for \(D_\alpha(t)\), we get
\begin{align}
\label{condition f winding first}
&\mathbb E\left(
\exp\left\{
-\frac{2\alpha^2}{\gamma}
\sum_{i=1}^r
\int_0^t\coth^2(2\tau_i(s))\,ds
\right\}
f(\tau(t))
\right)
\nonumber
\\
&\quad =
e^{
\frac{\alpha}{\sqrt\gamma}
r\bigl(2+a(r-1)\bigr)t
}
\mathbb E_\alpha\left[
\prod_{i=1}^r
\left(
\frac{\sinh(2\tau_i(0))}
     {\sinh(2\tau_i(t))}
\right)^{\frac{\alpha}{\sqrt\gamma}}
f(\tau(t))
\right].
\end{align}
Similarly, from the expression for \(\widetilde D_\alpha(t)\),
\begin{align}
\label{condition f winding second}
&\mathbb E\left(
\exp\left\{
-\frac{2\alpha^2}{\gamma}
\sum_{i=1}^r
\int_0^t\sinh^{-2}(2\tau_i(s))\,ds
\right\}
f(\tau(t))
\right)
\nonumber
\\
&\quad =
\widetilde{\mathbb E}_\alpha\left[
\prod_{i=1}^r
\left(
\frac{\tanh(\tau_i(0))}
     {\tanh(\tau_i(t))}
\right)^{\frac{\alpha}{\sqrt\gamma}}
f(\tau(t))
\right].
\end{align}

We now apply the heat-kernel representation to
\eqref{condition f winding first}. Since under \(\mathbb P_\alpha\) the
radial process has generator
\(\frac12\mathcal L_{a,\,1+2\alpha/\sqrt\gamma,\,0}\), the right-hand side of
\eqref{condition f winding first} equals
\begin{align*}
& e^{
\frac{\alpha}{\sqrt\gamma}
r\bigl(2+a(r-1)\bigr)t
}
\int_{\bar{\mathcal C}}
\prod_{i=1}^r
\left(
\frac{\sinh(2\tau_i(0))}
     {\sinh(2\eta_i)}
\right)^{\frac{\alpha}{\sqrt\gamma}}
f(\eta)
p_t^{a,\,1+2\alpha/\sqrt\gamma,\,0}(\tau(0),\eta)
\,dm_{a,\,1+2\alpha/\sqrt\gamma,\,0}(\eta).
\end{align*}
On the other hand, by conditioning with respect to \(\tau(t)\), the left-hand
side of \eqref{condition f winding first} is
\begin{align*}
\int_{\bar{\mathcal C}}
\mathbb E\left(
\left.
\exp\left\{
-\frac{2\alpha^2}{\gamma}
\sum_{i=1}^r
\int_0^t\coth^2(2\tau_i(s))\,ds
\right\}
\right| \tau(t)=\eta
\right)
f(\eta)
p_t^{a,\,1,\,0}(\tau(0),\eta)
\,dm_{a,\,1,\,0}(\eta).
\end{align*}
Since this holds for every bounded Borel function \(f\), we identify the two
densities. Using the explicit expression of \(dm_{k_1,k_2,k_3}\) in
\eqref{eq:mu-main}, we obtain
\begin{align*}
&\mathbb E\left(
\left.
\exp\left\{
-\frac{2\alpha^2}{\gamma}
\sum_{i=1}^r
\int_0^t\coth^2(2\tau_i(s))\,ds
\right\}
\right| \tau(t)=\tau
\right)
\\
&\quad =
e^{
\frac{\alpha}{\sqrt\gamma}
r\bigl(2+a(r-1)\bigr)t
}
\prod_{i=1}^r
\left(
\sinh(2\tau_i(0))\sinh(2\tau_i)
\right)^{\frac{\alpha}{\sqrt\gamma}}
\frac{
p_t^{a,\,1+2\alpha/\sqrt\gamma,\,0}(\tau(0),\tau)
}{
p_t^{a,\,1,\,0}(\tau(0),\tau)
}.
\end{align*}
Multiplying by \(e^{2\alpha^2rt/\gamma}\) gives the stated expression for
\(\mathcal A_\alpha(t,\tau)\).

The same argument applied to \eqref{condition f winding second}, now using
the heat kernel of
\[
\frac12\mathcal L_{a,\,1-2\alpha/\sqrt\gamma,\,4\alpha/\sqrt\gamma},
\]
gives
\[
\mathcal B_\alpha(t,\tau)
=
2^{-\frac{2\alpha r}{\sqrt\gamma}}
\prod_{i=1}^r
\left(
\tanh(\tau_i(0))\tanh(\tau_i)
\right)^{\frac{\alpha }{\sqrt\gamma}}
\frac{
p_t^{a,\,1-2\alpha/\sqrt\gamma,\,4\alpha/\sqrt\gamma}(\tau(0),\tau)
}{
p_t^{a,\,1,\,0}(\tau(0),\tau)
}.
\]
This completes the proof.
\end{proof}

\begin{remark}
    As a byproduct, note that we have derived another relation between heat kernels of the Heckman-Opdam Laplacian with different multiplicity functions: for any $\alpha>0$,
    \[
        p^{a,1+2\alpha /\sqrt{\gamma},0}_t(\tau,\eta)
        =
        2^{-\frac{4\alpha r}{\sqrt{\gamma}}}
        e^{
            \frac{-\alpha}{\sqrt{\gamma}}
            r\left(
                2 + \frac{2\alpha}{\sqrt{\gamma}} + a (r-1)
            \right)t
        }
        p^{a,1-2\alpha /\sqrt{\gamma},4\alpha/\sqrt{\gamma}}_t(\tau,\eta)
        \prod_{i=1}^{r} 
        \Big(
            \cosh(\tau_i)
            \cosh(\eta_i)
        \Big)^{-\frac{2
            \alpha
            }{
            \sqrt{\gamma}
            }}
        .
    \]
    As in \cref{area-relation-heat-kernels}, it follows from this relation that an integral representation of the heat kernel $p^{a,1-2\alpha /\sqrt{\gamma},4\alpha/\sqrt{\gamma}}_t(\tau,\eta)$ still exists even when the assumption $k_2 = 1-2\alpha /\sqrt{\gamma}\ge 0$ is violated.
\end{remark}

\begin{remark}[Comparison with the rank-one tube type~I case]
    In the rank $r=1$ tube case, the only Hermitian positive Jordan triple that arises is type~I with $m=n=r=1$, that is, $\mathbb{C}H^1$ identified with the Poincaré disk. It corresponds to the parameters $r=1$, $a=2$, $b=0$, $\gamma=2$.
    In this case, we have $\mathcal J = |\lambda|$ and the Riemannian distance from $0$ to $\lambda$ is given by
    \[
        \tau = \arctanh |\lambda|.
    \]
    The $d^c$ $\log$-determinant form process $A$ studied in this section matches the winding process studied in \cite{BaudoinWang2017} and \cite[Section~1.2.4]{BaudoinBook2024}.
    Taking $\lambda=\alpha/\sqrt\gamma$ in \cite[Eq.~4.1]{BaudoinWang2017} yields the same expression for $\mathbb{E}\left(e^{i\alpha A(t)}\right)$ as one can derive from the above theorem:
    \[
    \mathbb{E}\left(e^{i\alpha A(t)}\right)
    =
    \widetilde{\mathbb{E}}_\alpha
    \left[
        \left(
            \frac{\tanh(\tau(0))}{\tanh(\tau(t))}
        \right)^{\alpha/\sqrt{\gamma}}
    \right].
\]
\end{remark}

\begin{remark}
In relation to Remark~\ref{tube type ass}, it is worth pointing out that,
outside the tube-type case, one should not expect an analogous explicit
conditional characteristic function for the Brownian functional
\[
        \int_{X[0,t]} d^c\log\Bigl(\prod_{j=1}^r \lambda_j\Bigr).
\]
Indeed, in the previous proof the Girsanov transform change of measure no longer
transforms the spectral-value process into a Heckman--Opdam process with
shifted multiplicity parameters.
\end{remark}

We end this section with the analogue of Spitzer's Theorem \cite{Spitzer1958} in  non-compact Hermitian symmetric spaces of tube type. For the type~I, rank one case, it already appeared as Theorem~4.2 in \cite{BaudoinWang2017}.

\begin{corollary}\label{Cauchy limit}
    As $t\to\infty$, we have the convergence in distribution
    \[
        A(t)
        \;\Longrightarrow\;
        \mathcal{C}_{\frac{1}{\sqrt{\gamma}}
        \log\frac{1}{\mathcal J(X(0))}}
        ,
    \]
    where $\mathcal{C}_s$ denotes the Cauchy distribution with scale parameter $s$.
\end{corollary}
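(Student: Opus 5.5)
The plan is to compute the unconditional characteristic function via the skew-product structure and show it converges to $\exp\bigl(-\frac{|\alpha|}{\sqrt\gamma}\log\frac{1}{\mathcal J(X(0))}\bigr)=\mathcal J(X(0))^{|\alpha|/\sqrt\gamma}=\prod_i\tanh(\tau_i(0))^{|\alpha|/\sqrt\gamma}$, which is the characteristic function of $\mathcal C_s$ with $s=\frac{1}{\sqrt\gamma}\log\frac{1}{\mathcal J(X(0))}$. By the symmetry $A\overset{d}{=}-A$ coming from Proposition~\ref{skew product log determinant}, it suffices to take $\alpha>0$. Integrating the second formula of Theorem~\ref{characteristic function dc log det} (equivalently, using the identity \eqref{condition f winding second} with $f\equiv1$) gives
\[
\mathbb E\bigl(e^{i\alpha A(t)}\bigr)=\prod_{i=1}^r\tanh(\tau_i(0))^{\alpha/\sqrt\gamma}\;\widetilde{\mathbb E}_\alpha\Bigl[\prod_{i=1}^r\tanh(\tau_i(t))^{-\alpha/\sqrt\gamma}\Bigr],
\]
where under $\widetilde{\mathbb P}_\alpha$ the process $\tau$ is the Heckman--Opdam diffusion with generator $\frac12\mathcal L_{a,1-2\alpha/\sqrt\gamma,4\alpha/\sqrt\gamma}$. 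So everything reduces to showing $\widetilde{\mathbb E}_\alpha[\prod_i\tanh(\tau_i(t))^{-\alpha/\sqrt\gamma}]\to1$ as $t\to\infty$.

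The key steps are as follows. First, almost surely under $\widetilde{\mathbb P}_\alpha$ each $\tau_i(t)\to+\infty$, so the integrand tends to $1$. This is the law of large numbers for Heckman--Opdam processes, as used in Theorem~\ref{CLT area}: the drift of $\tau_i$ is asymptotically $(k_3/2+k_2)+\frac a2\cdot 2(\ldots)$, a positive constant here since $k_2+k_3=1+2\alpha/\sqrt\gamma>0$. The extension to possibly negative $k_2$ should follow from Appendix~A or from a direct comparison for $\tau_r$. Second, note that the wall $\tau_r=0$ is not hit, because $k_{e_i}+k_{2e_i}=\frac{2\alpha}{\sqrt\gamma}+\frac12-\frac{\alpha}{\sqrt\gamma}\ge\frac12$. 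Third, justify passing the limit inside the expectation, i.e.\ uniform integrability of $\prod_i\tanh(\tau_i(t))^{-\alpha/\sqrt\gamma}$.

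The main obstacle is the third step: the integrand blows up near $\tau_r=0$. A cleaner route is to avoid uniform integrability and work directly under $\mathbb P$. Write
\[
\mathbb E\bigl(e^{i\alpha A(t)}\bigr)=\mathbb E\bigl[e^{-\frac{2\alpha^2}{\gamma}\sum_i\int_0^t\sinh^{-2}(2\tau_i(s))ds}\bigr].
\]
By monotone convergence this converges to $\mathbb E[e^{-\frac{2\alpha^2}{\gamma}\sum_i\int_0^\infty\sinh^{-2}(2\tau_i)ds}]$, and the integral is finite a.s.\ since $\tau_i$ grows linearly. It then remains to identify this limit. Since $\widetilde D_\alpha(t)$ is a bounded martingale (bounded by $\widetilde C$ up to time $t$, and in fact uniformly in $t$ because $\tanh\le1$), it converges a.s.\ and in $L^1$ to $\widetilde D_\alpha(\infty)=e^{-\frac{2\alpha^2}{\gamma}\sum_i\int_0^\infty\sinh^{-2}(2\tau_i)}\prod_i\tanh(\tau_i(0))^{-\alpha/\sqrt\gamma}$, using $\tanh(\tau_i(t))\to1$ under $\mathbb P$ from Proposition~4.2 of \cite{Schapira_Heckman}. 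Then $\mathbb E[\widetilde D_\alpha(\infty)]=\widetilde D_\alpha(0)=1$ yields exactly the limit $\prod_i\tanh(\tau_i(0))^{\alpha/\sqrt\gamma}$. This uses only the $\mathbb P$-law of $\tau$ with nonnegative multiplicities, so it sidesteps the negative-multiplicity issues. Lévy's continuity theorem then concludes the proof.
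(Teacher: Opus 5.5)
Your proof is correct, but the way you exchange limit and expectation differs from the paper's.

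The paper actually follows your first route. It keeps working with $\widetilde{\mathbb E}_\alpha[G(t)]$, where $G(t)=\prod_{i}(\tanh\tau_i(0)/\tanh\tau_i(t))^{\alpha/\sqrt\gamma}$, and it resolves both difficulties you flagged there. First, it does not prove a law of large numbers for the negative-multiplicity operator. It gets $\tau_i(t)\to+\infty$ $\widetilde{\mathbb P}_\alpha$-a.s.\ from absolute continuity of $\widetilde{\mathbb P}_\alpha$ with respect to $\mathbb P$ on $\mathcal F_\infty$, since the bounded martingale $\widetilde D_\alpha$ is uniformly integrable. Second, it handles the blow-up of $G(t)$ at the wall by splitting along $I(t)=\prod_i\mathbf 1_{\{\tau_i(t)\ge1\}}$. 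On $\{I(t)=1\}$ the integrand is bounded, so dominated convergence applies. The complementary piece is transferred back to $\mathbb P$ through \eqref{condition f winding second} and bounded by $\mathbb P(\exists i:\tau_i(t)<1)\to0$.

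Your second route bypasses $\widetilde{\mathbb P}_\alpha$ entirely. You let $t\to\infty$ in $\mathbb E[e^{-\frac{\alpha^2}{2}\langle A\rangle(t)}]$ by monotonicity, then identify the limit through $\mathbb E[\widetilde D_\alpha(\infty)]=1$. This needs only three things:
\begin{itemize}
\item the explicit form of $\widetilde D_\alpha$;
\item its bound $\prod_i\tanh(\tau_i(0))^{-\alpha/\sqrt\gamma}$, uniform in $t$;
\item Schapira's law of large numbers for the nonnegative-multiplicity process $\frac12\mathcal L_{a,1,0}$.
\end{itemize}
This is shorter and avoids the good/bad event decomposition, and even the identity \eqref{condition f winding second}. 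It also gives slightly more: the exact formula $\mathbb E[e^{-\frac{\alpha^2}{2}\langle A\rangle(\infty)}]=\mathcal J(X(0))^{\alpha/\sqrt\gamma}$. Letting $\alpha\downarrow0$ in it shows $\langle A\rangle(\infty)<\infty$ a.s., so the local martingale $A(t)$ in fact converges almost surely to a Cauchy-distributed limit.

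The paper's version, in turn, stays closer to the heat-kernel representation of Theorem~\ref{characteristic function dc log det} that the corollary is meant to illustrate. Your first route, with its unproved law of large numbers under $\widetilde{\mathbb P}_\alpha$, is not needed once the second is in place.
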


\begin{proof}

In the proof of Theorem \ref{characteristic function dc log det}, we proved that, for every $\alpha>0$,
\[
    \mathbb{E}\left(e^{i\alpha A(t)}\right)
    =
    \mathbb{E}\left(
        \exp\left\{
            - \frac{\alpha^2}{2\gamma}
            \sum_{i=1}^{r}
            \int_{0}^{t}
            \left(
                \frac{1-\tanh^2(\tau_i(s))}{\tanh(\tau_i(s))}
            \right)^2 ds
        \right\}
    \right)
    =
    \widetilde{\mathbb{E}}_\alpha
    \left[
        \prod_{i=1}^{r}
        \left(
            \frac{\tanh(\tau_i(0))}{\tanh(\tau_i(t))}
        \right)^{\alpha/\sqrt{\gamma}}
    \right].
\]
It remains to justify the passage to the limit in the last expectation. Indeed, by the law of large numbers for the Heckman--Opdam process
\cite[Proposition~4.2]{Schapira_Heckman}, we have
\[
    \tau_i(t)\to+\infty,
    \qquad i=1,\dots,r,
    \qquad
    \mathbb P\text{-a.s.}
\]
Since the martingale $\widetilde D_\alpha(t)$ is bounded, it is uniformly integrable. Hence $\widetilde{\mathbb{P}}_\alpha$ is absolutely continuous with respect to $\mathbb P$ on $\mathcal{F}_\infty$, and therefore
\[
    \tau_i(t)\to+\infty,
    \qquad i=1,\dots,r,
    \qquad
    \widetilde{\mathbb P}_\alpha\text{-a.s.}
\]

For convenience, set
\[
    p:=\frac{\alpha}{\sqrt{\gamma}},
    \qquad
    G(t):=
    \prod_{i=1}^{r}
    \left(
        \frac{\tanh(\tau_i(0))}{\tanh(\tau_i(t))}
    \right)^p,
    \qquad
    I(t):=
    \prod_{i=1}^{r}
    \mathbf{1}_{\{\tau_i(t)\ge 1\}}.
\]
Then
\[
    \widetilde{\mathbb{E}}_\alpha(G(t))
    =
    \widetilde{\mathbb{E}}_\alpha(I(t)G(t))
    +
    \widetilde{\mathbb{E}}_\alpha((1-I(t))G(t)).
\]
We first consider the term with $I(t)$. Since $\tau_i(t)\to+\infty$ under $\widetilde{\mathbb P}_\alpha$, we have
\[
    I(t)G(t)
    \longrightarrow
    \prod_{i=1}^{r}\tanh(\tau_i(0))^p,
    \qquad
    \widetilde{\mathbb P}_\alpha\text{-a.s.}
\]
Thus, by dominated convergence,
\[
    \lim_{t\to+\infty}
    \widetilde{\mathbb{E}}_\alpha(I(t)G(t))
    =
    \prod_{i=1}^{r}\tanh(\tau_i(0))^p
    =
    \mathcal J(X(0))^{\alpha/\sqrt{\gamma}}.
\]

To conclude, it suffices to show that the second term vanishes. Using \eqref{condition f winding second}, we obtain
\begin{align*}
    \widetilde{\mathbb{E}}_\alpha((1-I(t))G(t))
    &\le
    1-\mathbb{P}\left(
        \tau_1(t)\ge 1,\ldots,\tau_r(t)\ge 1
    \right).
\end{align*}
Since $\tau_i(t)\to+\infty$ for every $i=1,\dots,r$, $\mathbb P$-a.s., the right-hand side tends to $0$. Hence
\[
    \lim_{t\to+\infty}
    \widetilde{\mathbb{E}}_\alpha((1-I(t))G(t))
    =
    0.
\]
Combining the two estimates gives
\[
    \lim_{t\to+\infty}
    \widetilde{\mathbb{E}}_\alpha
    \left[
        \prod_{i=1}^{r}
        \left(
            \frac{\tanh(\tau_i(0))}{\tanh(\tau_i(t))}
        \right)^{\alpha/\sqrt{\gamma}}
    \right]
    =
    \prod_{i=1}^{r}\tanh(\tau_i(0))^{\alpha/\sqrt{\gamma}}
    =
    \mathcal J(X(0))^{\alpha/\sqrt{\gamma}}.
\]
Therefore, for every $\alpha>0$,
\[
    \lim_{t\to+\infty}
    \mathbb{E}\left(e^{i\alpha A(t)}\right)
    =
    \mathcal J(X(0))^{\alpha/\sqrt{\gamma}}.
\]
Finally, by the equality in law of $A(t)$ and $-A(t)$, this extends to
\[
    \lim_{t\to+\infty}
    \mathbb{E}\left(e^{i\alpha A(t)}\right)
    =
    \mathcal J(X(0))^{|\alpha|/\sqrt{\gamma}},
    \qquad \alpha\in\mathbb R,
\]
which is the characteristic function of the announced Cauchy distribution.
\end{proof}

The scale parameter of the limiting Cauchy distribution admits a geometric interpretation as it measures in some sense the "logarithmic distance" from the starting point $X(0)$ to the hypersurface $\{ \mathrm{det}_{\mathcal J} = 0\}$. Note that it diverges as $X(0)$ goes to the hypersurface, so that the weak convergence above is consistent with the heuristic that the Brownian motion paths starting close to the  hypersurface they wind around accumulate more windings before escaping.

\appendix

\section{Heckman-Opdam Laplacians, processes and heat kernels}
\label{sec:Heckman-Opdam_Laplacians}

In this appendix we collect some results on the radial Heckman–Opdam Laplacian of type $BC_r$, its associated diffusion process and heat kernel. While these are classical when all multiplicity parameters are nonnegative, here we extend them to a setting where one multiplicity is allowed to be negative, which is the case relevant to the rest of the paper.

Introduce the notation
\[
\mathcal{C}=\left\{ \tau \in \mathbb{R}^r : 0< \tau_r < \cdots < \tau_1  \right\},
\qquad
\bar{\mathcal C}=\left\{ \tau \in \mathbb{R}^r : 0\le \tau_r \le \cdots \le  \tau_1  \right\},
\]
and consider the diffusion operator on $\bar{\mathcal C}$
\begin{equation}
\label{radial generator bis}
\begin{aligned}
\mathcal{L}_{k_1,k_2,k_3} = \sum_{i=1}^r \frac{\partial^2}{\partial \tau_i^2}
 + \sum_{i=1}^r& \Bigg( k_3 \coth(\tau_i) + 2k_2 \coth(2\tau_i)\\&+k_1 \sum_{j\neq i} \bigl( \coth(\tau_i - \tau_j) + \coth(\tau_i + \tau_j) \bigr) \Bigg) \frac{\partial}{\partial \tau_i}
\end{aligned}
\end{equation}
where we will always assume, when not stated otherwise, that $k_1\ge1$
and $k_2+k_3\ge1$. That is we allow $k_2,k_3$ to take negative values with some constraint on $k_2+k_3$. When no confusion is possible, we will denote by $k$ the triple $(k_1,k_2,k_3)$.

This operator belongs to the family of radial Heckman--Opdam operators. We refer to \cite{Schapira_Heckman,Schapira_sharp_estimates} for a thorough study of these operators and of the associated heat kernels when all multiplicities $k_i$ are nonnegative. Note that in our case, the root system associated with $\mathcal{L}_{k}$ is of type $BC_r$. In the notation of \cite{Schapira_Heckman,OshimaShimeno2010}, $\mathcal{L}_{k}$ is exactly the $BC_r$ Heckman--Opdam hypergeometric Laplacian with root system $\mathcal{R}$ determined by the positive root systems  
\[
    \mathcal{R}_+ = \left\{ 
        2e_i, 4e_i, 2e_p \pm 2e_q|1\le i\le r, 1\le p<q\le r
    \right\}
    ,
\]
whose associated Weyl group is denoted $W$,
and $W$-invariant multiplicity parameters
\[
k_{2e_i\pm 2e_j}=\tfrac{k_1}{2}\quad(\text{for }2e_i\pm 2e_j, i\neq j),\qquad
k_{4e_i}=\tfrac{k_2}{2}\quad(\text{for }4e_i),\qquad
k_{2e_i}=\tfrac{k_3}{2}\quad(\text{for }2e_i).
\]

\subsection{The diffusion process and its associated heat kernel}

When $k_i>0,i\in\{1,2,3\}$ it is known (\cite{Schapira_Heckman}) that $\frac{1}{2}\mathcal{L}_{k}$ is the generator of a unique diffusion process on $\bar{\mathcal{C}}$; we now extend this result to a setting where $k_2$ and $k_3$ are allowed to be negative.

\begin{lemma}
    \label{sol SDE}
    Let $k_1\ge 1$ and $k_2,k_3 \in \mathbb{R}$ be such that $k_2+k_3\ge 1$, then for any $X(0)\in\bar{\mathcal{C}}$ and standard Brownian motion $(\beta_i(t),i=1,\dots,r),t\ge0$ there exists a path-wise unique solution $(X_i(t),i=1,\dots,r),t\ge0$ to the stochastic differential equation 
    \begin{equation}
    \label{SDE}
    \begin{split}
        dX_i(t) = d\beta_i(t) &+ \Bigg( \frac{k_3}{2} \coth(X_i(t)) + k_2 \coth(2X_i(t)) \\
        & + \frac{k_1}{2} \sum_{j \neq i} \big( \coth(X_i(t) - X_j(t)) + \coth(X_i(t) + X_j(t)) \big) \Bigg) dt,
    \end{split}
    \end{equation}
    and $X(t)\in\mathcal{C}$ for  any $t>0$.
\end{lemma}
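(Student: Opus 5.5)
The plan is to reduce the statement to the classical case of nonnegative multiplicities treated in \cite{Schapira_Heckman} (Proposition~4.1 there), using a localization argument together with a comparison of one-dimensional radial behaviour near the walls. The drift in \eqref{SDE} is smooth, hence locally Lipschitz, on the open chamber $\mathcal C$. So for $X(0)\in\mathcal C$ there is a pathwise unique strong solution up to the exit time $T=\inf\{t: X(t)\notin\mathcal C\}$, possibly also up to explosion. The work is therefore in three places: proving non-explosion, proving that $T=+\infty$, and handling starting points on $\partial\mathcal C$.

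\textbf{Non-explosion.} The drift is bounded on $\{\tau_r\ge\varepsilon,\ \tau_i-\tau_{i+1}\ge\varepsilon\}$. Moreover, $\sum_i X_i$ has a drift that is bounded above away from the walls, since every $\coth$ term tends to $1$ at infinity. A Lyapunov function such as $V(x)=\sum_i x_i^2$ then gives $\mathcal L_k V\le C(1+V)$ on such regions. This prevents explosion before $T$.

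\textbf{Walls are not hit.} I would treat the two kinds of wall separately.

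\emph{Walls $\tau_i=\tau_{i+1}$.} Here the multiplicity $k_1\ge1$ is nonnegative. Write $Y=X_i-X_{i+1}$. Near $Y=0$, the drift of $Y$ equals $k_1\coth(Y)$ plus terms that stay bounded while the other coordinates remain separated. Since $dY$ has quadratic variation $2\,dt$, in time $2t$ this is a Bessel-type process of dimension $1+k_1\ge2$, which does not hit $0$. The standard argument applies: use the function $-\log(\sinh Y)$, or more generally $\log$ of the Weyl-denominator $\prod_{i<j}\sinh(X_i-X_j)\sinh(X_i+X_j)$, and apply It\^o's formula. The key cancellation identity, as in Lemma~\ref{computation coth}, makes the cross terms $\coth(x_i-x_j)\coth(x_i-x_l)+\dots$ cancel cyclically. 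This shows that the log-denominator cannot reach $-\infty$ in finite time.

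\emph{Wall $\tau_r=0$.} Near $0$ the drift of $X_r$ is
\[
\tfrac{k_3}{2}\coth X_r+k_2\coth(2X_r)\sim \frac{k_2+k_3}{2}\cdot\frac{1}{X_r},
\]
up to bounded terms. This is the only place where negative $k_2$ or $k_3$ could matter, and it is the step I expect to be the main obstacle. The relevant combination behaves like a Bessel drift of dimension $1+(k_2+k_3)\ge2$. I would apply It\^o's formula to
\[
U=-\log\sinh X_r-\log\cosh X_r\quad\text{or}\quad U=-\log\tanh X_r
\]
plus the log-Weyl-denominator. Using the identities $\tanh x=2\coth2x-\coth x$ and $\coth x-\coth2x=1/\sinh 2x$, I would show that the drift of the combined $U$ is bounded above on compact sets in $\tau_1$, once $k_2+k_3\ge1$. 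Then $U(X_{t\wedge T})$ is a local supermartingale plus a bounded-drift term, so it cannot blow up, and hence $T=\infty$.

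\textbf{Boundary starting points and pathwise uniqueness.} For $X(0)\in\partial\mathcal C$ I would follow \cite{Schapira_Heckman}. Pathwise uniqueness on $\bar{\mathcal C}$ comes from the monotonicity of the drift: $\coth$ is decreasing, which gives a one-sided Lipschitz condition
\[
\langle x-y,\,b(x)-b(y)\rangle\le C|x-y|^2
\]
on the chamber, where $b$ denotes the drift in \eqref{SDE}. This yields uniqueness by It\^o on $|X-Y|^2$. The negative $k_2$ or $k_3$ part needs checking here, since $k_2\coth 2x$ with $k_2<0$ is increasing. I would combine the two terms as
\[
\frac{k_2+k_3}{2}\coth x-\frac{k_2}{2}\tanh x,
\]
using $\coth 2x=\tfrac12(\coth x+\tanh x)$. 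In this form the first term is monotone decreasing and the second is Lipschitz.

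Existence from the boundary is then obtained by approximation: take $X^{(n)}(0)\in\mathcal C$ converging to $X(0)$. By the comparison and uniqueness above, the solutions $X^{(n)}$ are Cauchy in $L^2$, since the one-sided estimate gives $|X^{(n)}-X^{(m)}|\le e^{Ct}|X^{(n)}(0)-X^{(m)}(0)|$. The limit solves \eqref{SDE}, and the integrability of the singular drift near $0^+$ follows from the Bessel-dimension bound. Finally, the Markov property together with the previous step, and the fact that the process leaves $\partial\mathcal C$ immediately because its dimension is $\ge2$, gives $X(t)\in\mathcal C$ for all $t>0$.
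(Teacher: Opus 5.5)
Your route is different from the paper's. It is sound in outline, but one step fails as written.

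\textbf{How the paper argues.} The paper never analyses the walls directly. It starts from the identity you already found for the uniqueness step,
\[
\tfrac{k_3}{2}\coth x+k_2\coth 2x=\tfrac{k_2+k_3}{2}\coth x+\tfrac{k_2}{2}\tanh x .
\]
Note the plus sign in front of $\tanh$; your minus sign is a harmless slip. With this identity, the drift in \eqref{SDE} is the $BC_r$ drift with nonnegative multiplicities $(k_1,0,k_2+k_3)$ plus the bounded term $\frac{k_2}{2}\tanh x_i$.
\begin{itemize}
\item For the nonnegative-multiplicity drift, \cite{Schapira_Heckman} gives a strong solution from any point of $\bar{\mathcal C}$ that lies in $\mathcal C$ for all $t>0$.
\item A Girsanov change of measure with the bounded integrand $\frac{k_2}{2}\tanh X_i$ adds the $\tanh$ drift and yields a weak solution of \eqref{SDE}.
\item Equivalence of the laws on each $\mathcal F_t$ transfers non-attainment of the walls, including from boundary starting points.
\item Pathwise uniqueness is proved exactly as you propose (monotonicity of $\coth$, Lipschitz $\tanh$, Gronwall), and Yamada--Watanabe gives strong existence.
\end{itemize}

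\textbf{What each route buys.} Your approach uses local solutions in $\mathcal C$, a Lyapunov bound against explosion, a Lyapunov function blowing up at the walls, and approximation from the interior. It is self-contained and shows directly why $k_1\ge1$ and $k_2+k_3\ge1$ are the right thresholds. It also builds strong solutions without Yamada--Watanabe. The price is exactly the wall and boundary analysis that the Girsanov reduction gets for free.

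\textbf{The step that fails: the wall $\{x_r=0\}$.} Let $W=\prod_{i<j}\sinh(x_i-x_j)\sinh(x_i+x_j)$. A function involving only $x_r$, such as $-\log\sinh x_r-\log W$ (or the same with $-\log\tanh x_r$ or $-\log\sinh 2x_r$), does not have drift bounded above when $k_1=1$. This is not a marginal case: $k_1=a=1$ for type III domains. Indeed, in $\frac12\mathcal L_{k_1,k_2,k_3}(-\log W)$ the terms $\coth^2(x_i\mp x_j)$ carry coefficient $1-k_1$, which vanishes. The remaining asymmetric cross term is
\[
-\tfrac{k_1}{2}\coth x_r\sum_{j\neq r}\bigl(\coth(x_r-x_j)+\coth(x_r+x_j)\bigr)=k_1\sum_{j<r}\frac{\cosh^2x_r}{\sinh^2x_j-\sinh^2x_r},
\]
which behaves like $\frac{k_1\coth x_r}{2(x_{r-1}-x_r)}\to+\infty$ as $x_{r-1}\downarrow x_r>0$.

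\textbf{The repair: symmetrize.} Take $U=-\sum_{i=1}^r\log\sinh x_i-\log W$. Use the identity $\sum_i\coth x_i\sum_{j\neq i}\bigl(\coth(x_i-x_j)+\coth(x_i+x_j)\bigr)=r(r-1)$, proved exactly as Lemma~\ref{computation coth bis}, together with Lemma~\ref{computation coth}. This gives
\[
\tfrac12\mathcal L_{k_1,k_2,k_3}U=\frac{1-(k_2+k_3)}{2}\sum_{i}\coth^2x_i+(1-k_1)\sum_{i<j}\bigl(\coth^2(x_i-x_j)+\coth^2(x_i+x_j)\bigr)+O(1).
\]
This is bounded above exactly under the hypotheses. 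Your local-supermartingale argument then rules out hitting all walls at once.

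\textbf{One further point.} When you pass to the limit from interior starting points, identifying the limit as a solution requires $\int_0^t|b(X_s)|\,ds<\infty$. Get this from Fatou's lemma applied to the partial sums $\sum_{i\le l}b_i$ plus a constant, in which every singular term is nonnegative on $\mathcal C$. A one-dimensional Bessel comparison does not give it.
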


\begin{proof}
    Let  $X(0)\in\bar{\mathcal{C}}$.
    We rely on \cite[Proposition~4.1]{Schapira_Heckman} to assert the strong existence and path-wise uniqueness of a solution to 
    \[
        dX_i(t)
        =
        d\beta_i(t)
        +
        \left(
        \frac{k_3+k_2}{2} \coth(X_i(t)) 
        +
        \frac{k_1}{2}
        \sum_{j\neq i}
        \big(
            \coth(X_i(t)-X_j(t))
            +
            \coth(X_i(t)+X_j(t))
        \big)
        \right)
        dt
        ,
    \]
    and by the discussion above Proposition~4.2 in \cite{Schapira_Heckman}, $X(t)\in\mathcal{C}$ for any $t>0$ almost surely. 
     We define an exponential local martingale similar to the one used in the proof of Theorem  \ref{characteristic function area}
    \[
        D(t) = 
        \exp\left(
            \frac{k_2}{2}\sum_{i=1}^r\int_0^t \tanh(X_i(s))d\beta_i(s)
            -
            \frac{k_2^2}{8}\sum_{i=1}^r\int_0^t \tanh^2(X_i(s))ds
        \right)
        .
    \]
    For the same reason as in the proof we just mentioned, it is a true martingale.
    Hence we can define the probability measure 
    \[
        \mathbb{Q}_{\mid\mathcal{F}_{t}}=D(t)\mathbb{P}_{\mid \mathcal{F}_{t}}
        ,
    \]
    under which $X(t),t\ge0$ solves the stochastic differential equation
    \begin{equation*}
    \begin{split}
        dX_i(t) = d\tilde\beta_i(t) &+ \Bigg( \frac{k_3}{2} \coth(X_i(t)) + k_2 \coth(2X_i(t)) \\
        & + \frac{k_1}{2} \sum_{j \neq i} \big( \coth(X_i(t) - X_j(t)) + \coth(X_i(t) + X_j(t)) \big) \Bigg) dt,
    \end{split}
    \end{equation*}
    where $\tilde\beta_i(t)= \beta_i(t) - \int_{0}^{t}2^{-1}k_2 \tanh(X_i(s))ds$, $i=1,\dots,r$ is a standard Brownian motion. That is, we have proved the weak existence of a solution to our stochastic differential equation and that almost surely this solution lies in $\mathcal{C}$ for any $t>0$ and satisfies $X_1>\dots>X_r>0$.

    We now prove the path-wise uniqueness. The proof relies on the $1$-Lipschitzness of $\tanh$, the non-increasing property of the function $\coth$ used in \cite{Schapira_Heckman} to prove a similar result when multiplicities are all non-negative and Gronwall's lemma.
    Let $X^1,X^2$ be two solutions driven by the same Brownian motion $\beta$ and such that $X^1(0)=X^2(0)$ almost surely.
    By a Girsanov change of measure argument similar to the one used above to derive weak existence, we get $X^1_1>\dots>X^1_r>0$ and $X^2_1>\dots>X^2_r>0$ almost surely.
    Let $\Delta(t)=(X^1-X^2)(t)$ and $V=|\Delta(t)|^2=\sum_{i=1}^r\Delta_i^2(t)$. Notice $\frac{d}{dt}V(t)=2 \sum_{i=1}^r \Delta_i(t) \frac{d}{dt}\Delta_i(t)$ where
    \[
        \Delta_i(t)
        =
        \int_{0}^t
            \big(
                b_i(X_i^1(s)) - b_i(X_i^2(s)
            \big)
            +
            \frac{k_2}{2}
            \big(
                \tanh(X_i^1(s)) - \tanh(X_i^2(s)
            \big)
        ds
        ,
    \]
    for $b_i(x) = \frac{k_2 + k_3}{2}\coth(x_i)  + \frac{k_1}{2}\sum_{j\neq i}\bigl(\coth(x_i-x_j)+\coth(x_i+x_j)\bigr)$. Since $\tanh$ is $1$-Lipschitz, we can write
    \[
        \Big| 
            \sum_{i=1}^r
                \Delta_i
                \frac{k_2}{2}
                \big(
                    \tanh(X_i^1(s)) - \tanh(X_i^2(s)
                \big)
        \Big|
        \le
        \frac{|k_2|}{2}
        \sum_{i=1}^r
            \Delta_i^2(s)
        =
        \frac{|k_2|}{2} V(s)
    \]
    and using that $\coth$ is non-increasing on $\mathbb{R}_{>0}$, we also have
    \[
    \sum_{i=1}^r \Delta_i \bigl( b_i(X^1(s)) - b_i(X^2(s)) \bigr) \le 0.
    \]
    Indeed, the non-interacting terms give $(x_i-y_i)(\coth x_i-\coth y_i)\le 0$ because $\coth$ is non-increasing on the set $(0,\infty)$ to which belongs $X^1_i,X_i^2$ almost surely.
    For the interaction term we symmetrize:
    \begin{align*}
    \sum_{i=1}^r \Delta_i \sum_{j\neq i} \coth(X_i^1-X_j^1)
    &= \sum_{i<j} (\Delta_i-\Delta_j)\coth(X_i^1-X_j^1), \\
    \sum_{i=1}^r \Delta_i \sum_{j\neq i} \coth(X_i^1+X_j^1)
    &= \sum_{i<j} (\Delta_i+\Delta_j)\coth(X_i^1+X_j^1),
    \end{align*}
    and the same identities hold for $X^2$.
    The differences then become
    \[
    \frac{k_1}{2}\sum_{i<j}(\Delta_i-\Delta_j)\bigl(\coth(X_i^1-X_j^1)-\coth(X_i^2-X_j^2)\bigr)
    +\frac{k_1}{2}\sum_{i<j}(\Delta_i+\Delta_j)\bigl(\coth(X_i^1+X_j^1)-\coth(X_i^2+X_j^2)\bigr).
    \]
    Since $X^1_1>\dots>X^1_r>0$ and $X^2_1>\dots>X^2_r>0$ almost surely, the differences $X_i-X_j$ ($i<j$) are strictly positive and the sums $X_i+X_j$ are strictly positive.
    Writing $\Delta_i-\Delta_j = (X_i^1-X_j^1)-(X_i^2-X_j^2)$ and $\Delta_i+\Delta_j = (X_i^1+X_j^1)-(X_i^2+X_j^2)$, each term is of the form $(u-v)(\coth u-\coth v)\le 0$ with $u,v>0$, again because $\coth$ is non-increasing.
    Hence the whole expression is non-positive.
    Therefore since $\Delta(0)=0$ we get
    \[
        V(t)\le 
        |k_2|
        \int_{0}^t
            V(s)ds
    \]
    and the pathwise uniqueness follows from a direct application of Gronwall's lemma.
    
    Finally, using Yamada–Watanabe Theorem \cite{YamadaWatanabe, Jacod1980} we get strong existence and uniqueness of a solution to the stochastic differential equation we are interested in.
\end{proof}

Consider the measure $m_{k}$ defined by its density with respect to the Lebesgue measure as follows:
\begin{align}
dm_{k}(\tau)
&=\;
 \prod_{i=1}^r |\sinh \tau_i|^{k_3}\,|\sinh (2\tau_i)|^{k_2}\;
 \prod_{1\le i<j\le r}\Bigl|\sinh(\tau_i-\tau_j)\,\sinh(\tau_i+\tau_j)\Bigr|^{k_1} d\tau
\label{eq:mu-main-bis}.
\end{align}
Under the assumptions made in the lemma above on the multiplicities $k_i$, $m_{k}$ is a Radon measure on the Borel sets of $\bar{\mathcal{C}}$ that is symmetrizing for the elliptic operator $\frac{1}{2}\mathcal{L}_{k}$.
As a consequence, there exists a heat kernel $p_t^{k}$ for $\frac{1}{2}\mathcal{L}_{k_1,k_2,k_3}$ with respect to $m_{k}$---we refer for example to Proposition~4.11, Theorem~4.23 in \cite{baudoin2014diffusion}.


\subsection{Exponential integrability of the Heckman-Opdam Processes}

From now on we further assume $k_2\ge0$. This allows us to derive a usual result that will be applied multiple times in the rest of this appendix as well as throughout the main text to the processes $\tau_i(t)=\arctanh \left( \lambda_i(t) \right)$ where $\lambda_i(t)$, $1 \le i \le r$, are the spectral values of a Brownian motion $(X(t))_{t \ge 0}$ on $\Omega$.

\begin{lemma}
\label{exponential integrability}
    Let $k_1\ge1,k_2\ge0$ and $k_3 \in \mathbb{R}$ be such that $k_2+k_3\ge 1$.
    Consider a stochastic  process $(X_i(t),1\le i \le r)$ on $\bar{\mathcal{C}}$ with generator $\frac{1}{2}\mathcal{L}_{k_1,k_2,k_3}$.
    Then for every $t \ge 0$  and $\alpha  \ge 0$
    \[
        \mathbb{E} \left( e^{\alpha X_i(t)} \right) 
        <+\infty
        , 
        \qquad i=1,\cdots,r.
    \]
\end{lemma}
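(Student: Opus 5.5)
Since $0\le X_r(t)\le\cdots\le X_1(t)$, it suffices to bound $\mathbb{E}(e^{\alpha X_1(t)})$. My plan is to dominate $X_1$ pathwise by a one-dimensional process with explicitly controlled drift. The useful quantity is the sum $\Sigma(t)=\sum_{i=1}^r X_i(t)$, which dominates $X_1(t)$. The drift of $\Sigma$ simplifies by antisymmetry: the terms $\coth(X_i-X_j)$ cancel in pairs when summed over $i\neq j$. What remains is
\[
\sum_{i=1}^r\Big(\tfrac{k_3}{2}\coth X_i+k_2\coth(2X_i)\Big)+\frac{k_1}{2}\sum_{i\neq j}\coth(X_i+X_j).
\]
The difficulty is that these $\coth$ terms blow up near the walls $X_i=0$ and $X_i+X_j=0$, and $k_3$ may be negative.

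To handle this, I first rewrite $\frac{k_3}{2}\coth x+k_2\coth 2x=\frac{k_2+k_3}{2}\coth x+\frac{k_2}{2}\tanh x$, using $2\coth 2x=\coth x+\tanh x$. Here $k_2\ge0$, so $\frac{k_2}{2}\tanh x\le k_2/2$ is bounded. The singular terms $\coth X_i$ and $\coth(X_i+X_j)$ all have positive coefficients, so they are positive but unbounded. Rather than bounding them, I would apply It\^o's formula directly to $F(X)=e^{\alpha\Sigma}$ or, better, to a smooth convex function. Concretely, I would take $f(X)=\sum_i\cosh(\alpha X_i)$ or $\exp(\alpha\Sigma)$. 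The point is that $\coth x\le 1+1/x$, and products like $e^{\alpha x}\coth x$ are integrable but not bounded by a multiple of $e^{\alpha x}$ near $0$. Still, near $0$ the factor $e^{\alpha x}$ is bounded, so the bound $e^{\alpha x}\coth x\le C e^{\alpha x}+\coth x\,\mathbf 1_{x\le1}\cdot C'$ holds.

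This suggests a cleaner route: apply It\^o's formula to $g(X)=\sum_i\phi(X_i)$ with $\phi(x)=\cosh(\alpha x)$. Since $\phi'(x)=\alpha\sinh(\alpha x)$ vanishes at $0$ to first order, the products $\phi'(x)\coth x$ are bounded near $0$ by $\alpha^2\cosh(\alpha x)$-type quantities. Likewise $\sinh(\alpha x)\coth x\le C\cosh(\alpha x)$ for all $x\ge0$. For the interaction terms I would symmetrize as in the proof of Lemma~\ref{sol SDE}. This gives pairs $(\sinh\alpha X_i-\sinh\alpha X_j)\coth(X_i-X_j)$, which are bounded by $C(\cosh\alpha X_i+\cosh\alpha X_j)$ by the mean value theorem, and $(\sinh\alpha X_i+\sinh\alpha X_j)\coth(X_i+X_j)$, which is bounded similarly since $\sinh\alpha X_i\le C\cosh(\alpha X_i)\tanh(X_i+X_j)$-type estimates hold for nonnegative arguments. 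Negative coefficients only help or are absorbed, because every term is bounded in absolute value by $C\,g$. The outcome is $\frac12\mathcal L_k g\le C g$.

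Next I localize with stopping times $T_n=\inf\{t: X_1(t)\ge n\}$. Taking expectations of the stopped It\^o formula, the martingale part vanishes, and $\mathbb E g(X(t\wedge T_n))\le g(X(0))+C\int_0^t\mathbb E g(X(s\wedge T_n))\,ds$. Gronwall gives $\mathbb E g(X(t\wedge T_n))\le g(X(0))e^{Ct}$, and Fatou's lemma as $n\to\infty$ yields the claim. This works because $T_n\to\infty$ by non-explosion, which follows from global existence in Lemma~\ref{sol SDE}. The only subtlety at the walls is that $X(t)\in\mathcal C$ for $t>0$ by Lemma~\ref{sol SDE}, so It\^o's formula applies on $(0,t]$, with continuity at time $0$.

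I expect the main obstacle to be the uniform estimate $\frac12\mathcal L_k g\le Cg$ on all of $\mathcal C$, particularly the interaction terms near the walls $X_i=X_j$ and $X_i+X_j\to0$. If the choice $\phi=\cosh(\alpha\cdot)$ does not close cleanly there, my fallback is the $W$-invariant function $\sum_{w\in W}e^{\alpha\langle w\cdot X,\rho\rangle}$, whose gradient is tangent to the walls by symmetry, so that all singular terms have removable singularities.
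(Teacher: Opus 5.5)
Your final route, with $g(x)=\sum_i\cosh(\alpha x_i)$, is correct. It shares the paper's localization--Gronwall--Fatou skeleton but uses a different test function.

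The paper takes the quadratic Lyapunov function $V(x)=|x|^2$. For it, $x\cdot\mu(x)$ regroups directly into terms $u\coth u$ and $u\tanh u$, each at most $1+u^2$, so $\mathcal LV\le C_0+C_1V$ is almost immediate. The paper then needs the time-dependent function $\exp(\eta(t)V)$, with $\eta$ solving a Riccati equation, to turn this into an integrability statement. In exchange it obtains the stronger Gaussian bound $\mathbb E\exp(\eta(t)|X(t)|^2)<\infty$, and exponential moments follow from $\alpha u\le \eta u^2+\alpha^2/(4\eta)$.

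Your test function is time-independent and aims exactly at the exponential moment. The estimate $\tfrac12\mathcal L_k g\le C_\alpha g$ on $\mathcal C$ gives $\mathbb E e^{\alpha X_i(t)}\le 2g(X(0))e^{C_\alpha t}$ directly, and shows that $e^{-C_\alpha t}g(X(t))$ is a supermartingale. The price is a more delicate drift estimate near the walls, which you handle correctly. Since $\sinh(\alpha\cdot)$ vanishes at $0$, one has $\sinh(\alpha x)\coth x\le C\cosh(\alpha x)$. For the symmetrized sum terms, $\coth(x_i+x_j)\le\coth x_i$.

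One step needs fixing. For the difference terms, the mean value theorem alone gives $\alpha\cosh(\alpha\xi)\,u\coth u$ with $u=x_i-x_j$, which is not bounded by $C\cosh(\alpha x_i)$ for large $u$. Use it only when $u\le 1$, and for $u>1$ bound $\coth u\le\coth 1$ and $\sinh(\alpha x_i)-\sinh(\alpha x_j)\le\cosh(\alpha x_i)$.

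The $e^{\alpha\Sigma}$ idea from your first paragraph would not close, since $e^{\alpha\Sigma}\coth X_r$ is not dominated by $e^{\alpha\Sigma}$. The $W$-invariant fallback is unnecessary.
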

\begin{proof}
Let \(X(0)=x\in\overline{\mathcal C}\) be fixed. We  give the argument for
\(x\in \mathcal C\); the boundary case follows 
using the fact that \(X(t)\in \mathcal C\) for all \(t>0\) almost surely.

Write
\[
\mathcal L=\frac12 \mathcal L_{k_1,k_2,k_3}
  =\frac12\sum_{i=1}^r \frac{\partial^2}{\partial x_i^2}
   +\sum_{i=1}^r \mu_i(x)\frac{\partial}{\partial x_i},
\]
where
\[
\mu_i(x)
=
\frac{k_3}{2}\coth x_i
+k_2\coth(2x_i)
+\frac{k_1}{2}\sum_{j\ne i}
\bigl(\coth(x_i-x_j)+\coth(x_i+x_j)\bigr).
\]
Let
\[
V(x)=|x|^2=\sum_{i=1}^r x_i^2 .
\]
Then
\[
 \mathcal LV(x)=r+2x\cdot \mu(x).
\]
Using
\[
2\coth(2u)=\coth u+\tanh u,
\]
the one-particle contribution is
\[
k_3 x_i\coth x_i+2k_2 x_i\coth(2x_i)
=
(k_2+k_3)x_i\coth x_i+k_2x_i\tanh x_i.
\]
Since \(k_2\ge0\) and \(k_2+k_3\ge1\), and since
\[
u\coth u\le 1+u^2,\qquad u\tanh u\le u^2
\qquad (u>0),
\]
this contribution is bounded above by a constant plus a constant multiple
of \(x_i^2\).

For the interaction terms, grouping the pair \((i,j)\) with \((j,i)\) gives
\[
x_i\coth(x_i-x_j)+x_j\coth(x_j-x_i)
=
(x_i-x_j)\coth(x_i-x_j),
\]
and
\[
x_i\coth(x_i+x_j)+x_j\coth(x_j+x_i)
=
(x_i+x_j)\coth(x_i+x_j).
\]
Both are bounded above by a constant plus a quadratic function of \(x_i\)
and \(x_j\). Therefore there exist constants \(C_0,C_1>0\) such that
\[
\mathcal LV(x)\le C_0+C_1V(x),\qquad x\in \mathcal C.
\]

Let \(\eta\) be the positive solution on \([0,\infty)\) of
\[
\eta'(t)+C_1\eta(t)+2\eta(t)^2=0,\qquad \eta(0)=1,
\]
namely
\[
\eta(t)=\frac{C_1e^{-C_1t}}{C_1+2(1-e^{-C_1t})}.
\]
Define
\[
F(t,x)=\exp(\eta(t)V(x)).
\]
Then
\[
\nabla F=2\eta xF,\qquad
\mathcal LF=F\bigl(\eta \mathcal LV+2\eta^2V\bigr).
\]
Consequently,
\[
(\partial_t+\mathcal L)F
=
F\bigl(\eta'V+\eta \mathcal LV+2\eta^2V\bigr)
\le C_0\eta(t)F.
\]

Let
\[
\sigma_n=\inf\{s\ge0:\ |X(s)|\ge n\}.
\]
Applying Itô's formula to \(F(s\wedge\sigma_n,X(s\wedge\sigma_n))\) and
taking expectations gives
\[
\mathbb E \left( F(t\wedge\sigma_n,X(t\wedge\sigma_n))\right)
\le
F(0,x)+C_0\int_0^t
\eta(s)\,
\mathbb E \left(  F(s\wedge\sigma_n,X(s\wedge\sigma_n))\right)\,ds .
\]
By Gronwall's lemma,
\[
\mathbb E  \left ( F(t\wedge\sigma_n,X(t\wedge\sigma_n)) \right)
\le
e^{|x|^2}
\exp\left(C_0\int_0^t\eta(s)\,ds\right).
\]
Letting \(n\to\infty\) and using Fatou's lemma yields
\[
\mathbb E \left( \exp\{\eta(t)|X(t)|^2\} \right)
\le
e^{|x|^2}
\exp\left(C_0\int_0^t\eta(s)\,ds\right)
<\infty .
\]
Since \(\eta(t)>0\), for every \(\alpha\ge0\),
\[
\alpha |X(t)|
\le
\eta(t)|X(t)|^2+\frac{\alpha^2}{4\eta(t)}.
\]
Hence
\[
\mathbb E \left( e^{\alpha |X(t)|}\right) <\infty.
\]
\end{proof}

As an immediate consequence,  under the assumptions of the Lemma above the processes $X_i(t),t\ge0$ are integrable and since $X_1(t),t\ge0$ has a non-negative drift, it is a  submartingale. By Doob's maximal inequality and $X_i \le X_1$, we also obtain, for any $i$ and $t\ge0$, the inequality
\[
    \mathbb{E}
    \left[
        \sup_{0\le s\le t}
        e^{
            \alpha
            X_i(s)
        }
    \right]
    <
    \infty
    ,
    \qquad
    \alpha>0
    .
\]

\subsection{Integral representation of the heat kernel}

When all multiplicities are positive, it is known \cite{Schapira_Heckman,Schapira_sharp_estimates} building on \cite{rosler1998} that the heat kernel $p_t^k$ admits a representation as an integral involving Heckman-Opdam hypergeometric functions $F_\lambda \left(k;\bullet\right)$ as a consequence of the inversion formula for the hypergeometric Fourier transform developed in \cite{Opdam1995}. 
When some multiplicities can be negative, one needs to deal with the poles of the integrand in the inversion formula; such an inversion formula has been derived in Theorem~5.3 of \cite{honda2024inversion}.
We first introduce the notations of the latter theorem, then relying on it we prove an integral representation of $p_t^k$ in the setting where $k_1\ge1,k_2\ge0$ and $k_3\in\mathbb{R}$ with the constraint $k_2+k_3\ge1$.

The Heckman-Opdam hypergeometric function $F_\lambda \left(k;\bullet\right)$ is the unique $W$-invariant real analytic function on $\bar{\mathcal{C}}$ 
, normalized by $F_\lambda(k;0)=1$, and satisfying on $\mathbb{R}^r$ the relation
\begin{align}
    \mathcal{L}_{k}\,F_\lambda \left(k;\tau\right)
    &=\bigl(
    \langle \lambda, \lambda\rangle
    -|\rho(k)|^2
    \bigr)\,
    F_\lambda \left({k};\tau\right)
\label{eq:eig}
\end{align}
where the operator $\mathcal{L}_{k}$ is naturally extended from $\bar{\mathcal{C}}$ to $\mathbb{R}^r$ using the $W$-invariance of its coefficients and $\rho(k)$ is defined by
\[
    \rho(k)
    =
    \frac{1}{2}
    \sum_{\alpha\in\mathcal{R}_+} k_\alpha \alpha
    .
\]

Let $\mathrm{C}_{c}^\infty(\mathbb{R}^r)^W$ denote the family of $W$-invariant compactly supported smooth functions on $\mathbb{R}^r$. It is a dense subset of the set $L^2(\mathbb{R}^r,|W|^{-1}dm_k)^W$ of $W$-invariant elements of $L^2(\mathbb{R}^r,|W|^{-1}dm_k)$. We identify it with $L^2(\mathcal{C},dm_k)$.
For $k\in\mathbb{C}^3$, the hypergeometric Fourier transform $\mathcal{F}_{k}$ of $f \in \mathrm{C}_{c}^\infty(\mathbb{R}^r)^W$ is defined by
\[
    \mathcal{F}_{k} f(\lambda) 
    = 
    \int_{\mathcal{C}} 
        f(x) 
        F_{\lambda}(k; x) 
        dm_k(x)
    = 
    \frac{1}{|W|} 
    \int_{\mathbb{R}^r} 
        f(x) F_{\lambda}(k; x)
        dm_k(x)
    .
\]

Let $\alpha_1,\dots,\alpha_r$ denote the positive roots defined by
\[
    \alpha_i
    =
    2e_{r+1-i}
    -
    2e_{r-i}
    \;\;
    (1 \le i \le r-1),
    \qquad
    \alpha_r
    =
    2e_1
\]
and write $\mathcal{B}=\{\alpha_1,\dots,\alpha_r\}$ for the set of simple roots in $\mathcal{R}_+$.
We define for $0\le j \le r$ subsets of $\mathcal{B}$ by
\[
    \Theta_j 
    = 
    \{ \alpha_l \mid r - j + 1 \le l \le r \}.
\]
and write $\langle \Theta_j \rangle$ for the subset of $\mathcal{R}$ made up of linear combinations of elements of $\Theta_j$, and $\langle \Theta_j \rangle_+ = \langle \Theta_j\rangle \cap \mathcal{R}_+$.

We now introduce the $c$-functions used to write the density of the symmetric Plancherel measures that appear in the formula of the heat kernel. Let $\alpha^\vee$ denote the coroot $2\alpha/\langle \alpha,\alpha \rangle$ of $\alpha$. Define
\[
    \tilde c_{k,\alpha}(\lambda) 
    = 
    \frac{
        \Gamma\left(\langle\lambda, \alpha^\vee\rangle + \frac{1}{2}k_{\frac{1}{2}\alpha}\right)
    }{
        \Gamma\left(\langle\lambda, \alpha^\vee\rangle + \frac{1}{2}k_{\frac{1}{2}\alpha} + k_\alpha\right)
    }
\]
where $\Gamma$ is the usual gamma function and
\begin{align*}
    &\prescript{}{j}{\tilde{c}}_k(\lambda)
    =
    \prod_{\alpha \in \langle \Theta_j \rangle_+}
    \tilde c_{k,\alpha}(\lambda)
    ,\qquad
    &&\prescript{j}{}{\tilde{c}}_k(\lambda)
    =
    \prod_{\alpha \in \mathcal{R}_+ \setminus \langle \Theta_j \rangle_+}
    \tilde c_{k,\alpha}(\lambda)
    \\
    &\prescript{}{j}c_k(\lambda)
    =
    \frac{\prescript{}{j}{\tilde{c}}_k(\lambda)}{\prescript{}{j}{\tilde{c}}_k(\rho(k))}
    ,
    \qquad
    &&\prescript{j}{}c_{k}(\lambda)
    =
    \frac{\prescript{j}{}{\tilde{c}}_k(\lambda)}{\prescript{j}{}{\tilde{c}}_k(\rho(k))},
\end{align*}
so that the Harish-Chandra $c$-function can be written
\[
    c_k(\lambda) = \prescript{}{j}c_k(\lambda) \prescript{j}{}c_k(\lambda)
    .
\]
Let $\boldsymbol{\alpha}=(k_2+k_3-1)/2$ and $\boldsymbol{\beta}=(k_2-1)/2$.
For $1\le j \le r$ we introduce the finite subsets of $\mathbb{R}^j$ defined by
\begin{align*}
    D_{k,j}
    = 
    \{ & (\lambda_1, \dots, \lambda_j) \in \mathbb{R}^j; \lambda_1 + |\boldsymbol{\beta}| - \boldsymbol{\alpha} - 1 \in 2\mathbb{N},
    \lambda_j < 0, \\
    & \lambda_{l+1} - \lambda_l - k_1 \in 2\mathbb{N} \ (1 \le l \le j-1) \}.
\end{align*}
which are non-empty if and only if $\boldsymbol{\alpha} - |\boldsymbol{\beta}| + (j-1)k_1 + 1<0$, allowing one to recover the formulas from \cite{Opdam1995} when all multiplicities are non-negative. For convenience, when $j=0$ we write $D_{k,0} = \{0\}$.

We also define subsets of $W$ by 
\[
    W_j = \{ w \in W \mid w \langle \Theta_j \rangle_+ \subset \mathcal{R}_+ \}\; (1\le j \le r), \qquad W_0=W.
\]
Let $a_j=\mathrm{span}\left( \Theta_j \right)\cong \mathbb{R}^j$ and $a^j=a_j^\perp\cong\mathbb{R}^{r-j}$ its orthogonal complement so that any $\lambda\in\mathbb{R}^r$ can be uniquely decomposed as $\lambda=\lambda_{a_j} + \lambda_{a^j}$ with $\lambda_{a_j}\in a_j$ and $\lambda_{a^j}\in a^j$.
For $j = 0, \dots, r$ and $\lambda_{a_j} = (\lambda_1, \dots, \lambda_j) \in D_{k,j}$ we define a positive number $d_j(\lambda_{a_j}, k)$ as follows. If $k_1>0$ and $r\neq1$ we define
\begin{align*}
    d_{j}(\lambda_{a_j}, k) 
    &= \prescript{}{j}{\tilde{c}}_k(\rho(k))^2 \nonumber \\
    &\times \prod_{l=1}^j \frac{-2^{2\boldsymbol{\alpha}-2\boldsymbol{\beta}-1} \lambda_l}{\pi} \frac{\Gamma\left(\frac{1}{2}(\lambda_l + \boldsymbol{\alpha} + |\boldsymbol{\beta}| + 1)\right) \Gamma\left(\frac{1}{2}(-\lambda_l + \boldsymbol{\alpha} + |\boldsymbol{\beta}| + 1)\right)}{\Gamma\left(\frac{1}{2}(\lambda_l - \boldsymbol{\alpha} + |\boldsymbol{\beta}| + 1)\right) \Gamma\left(\frac{1}{2}(-\lambda_l - \boldsymbol{\alpha} + |\boldsymbol{\beta}| + 1)\right)} \nonumber \\
    &\times \prod_{1 \leq q < p \leq j} \frac{(\lambda_q^2 - \lambda_p^2) \Gamma\left(\frac{1}{2}(\lambda_p - \lambda_q + 2k_1)\right) \Gamma\left(\frac{1}{2}(-\lambda_q - \lambda_p + 2k_1)\right)}{4 \Gamma\left(\frac{1}{2}(\lambda_p - \lambda_q - 2k_1 + 2)\right) \Gamma\left(\frac{1}{2}(-\lambda_q - \lambda_p - 2k_1 + 2)\right)}.
\end{align*}
If $k_1=0$ or $r = 1$, letting $W_{j}^{\lambda_{a_j}}$ denote the stabilizer of $\lambda_{a_j}$ in $W_{j}$, we write
\begin{align*}
    d_{j}(\lambda_{a_j}, k) 
    &= \prescript{}{j}{\tilde{c}}_k(\rho(k))^2 
    \left| W_{j}^{\lambda_{a_j}} \right|^{-1} \nonumber \\
    &\times \prod_{l=1}^j \frac{-2^{2\boldsymbol{\alpha}-2\boldsymbol{\beta}-1} \lambda_l}{\pi} \frac{\Gamma\left(\frac{1}{2}(\lambda_l + \boldsymbol{\alpha} + |\boldsymbol{\beta}| + 1)\right) \Gamma\left(\frac{1}{2}(-\lambda_l + \boldsymbol{\alpha} + |\boldsymbol{\beta}| + 1)\right)}{\Gamma\left(\frac{1}{2}(\lambda_l - \boldsymbol{\alpha} + |\boldsymbol{\beta}| + 1)\right) \Gamma\left(\frac{1}{2}(-\lambda_l - \boldsymbol{\alpha} + |\boldsymbol{\beta}| + 1)\right)}
    .
\end{align*}

\begin{theorem}\label{heat kernel formula}
    Let $k_1\ge 1, k_2\ge0$ and $k_3 \in \mathbb{R}$ be such that $k_2+k_3\ge 1$, then for any $\tau,\eta\in\bar{\mathcal{C}}$ we can write
    \[
        p_{t}^{k}(\tau, \eta)
        =
        p_{t}^{k, \text{cont}}(\tau, \eta)
        +
        \sum_{j=1}^{r} p_{t}^{k,  j}(\tau, \eta)
    \]
    where
    \[
        p_{t}^{k, \text{cont}}(\tau, \eta) 
        = 
        \int_{i \mathbb{R}^{r}} e^{-\frac{t}{2}\left(|\lambda|^{2}+|\rho(k)|^{2}\right)} F_{\lambda}(k ; \tau) F_{\lambda}(k ;-\eta) 
        \frac{C d \lambda}{\left|c_{k}(\lambda)\right|^{2}}
    \]
    and 
    \[
        p_{t}^{k, j}(\tau, \eta)
        =
        \sum_{\xi \in D_{k,j}} 
            d_{j}(\xi, k) 
            \int_{i a^j}
                e^{
                    \frac{t}{2}\left(\langle\xi+\lambda, \xi
                    +
                    \lambda \rangle
                    -
                    |\rho(k)|^{2}\right)
                } 
                F_{\xi+\lambda}( k ; \tau) 
                F_{\xi+\lambda}(k ;-\eta) 
                \frac{C_jd\lambda}{\left|c_{k}^{j}(\xi+\lambda)\right|^{2}}
    \]
    for some constants $C,C_1,\dots,C_r\in\mathbb{C}$ that do not depend on the multiplicities and such that each integral is real-valued---we refer to \cite[Equation~5.8]{honda2024inversion}. In particular, when $k_3\ge-2$, the sum term in $p_t^k$ vanishes.
\end{theorem}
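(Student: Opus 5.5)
The plan is to obtain the representation by spectral theory: the inversion formula for the hypergeometric Fourier transform $\mathcal F_k$ from Theorem~5.3 of \cite{honda2024inversion}, applied to the semigroup of $\frac12\mathcal L_k$. The steps are as follows.

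\textbf{Step 1: self-adjointness.} By Lemma~\ref{sol SDE} and the symmetry of $m_k$, the operator $\frac12\mathcal L_k$ with domain $\mathrm C_c^\infty(\mathbb R^r)^W$ is essentially self-adjoint on $L^2(\mathcal C,dm_k)$, and its closure generates the Markov semigroup $P_t$ of the diffusion. I would check this by showing that the closure is the Friedrichs extension. Here the assumptions $k_1\ge1$ and $k_2+k_3\ge1$ are used: they are exactly the conditions under which the walls are not hit. Near the wall $\tau_r=0$ the density behaves like $\tau_r^{k_2+k_3}$ with $k_2+k_3\ge1$, so the boundary is of limit-point (entrance) type and no boundary condition is needed.

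\textbf{Step 2: diagonalisation.} The inversion theorem of \cite{honda2024inversion} states that $\mathcal F_k$ extends to a unitary isomorphism from $L^2(\mathcal C,dm_k)$ onto $L^2$ of a Plancherel measure. That measure consists of:
\begin{itemize}
\item a continuous part on $i\mathbb R^r$ with density $C|c_k(\lambda)|^{-2}$;
\item for each $j$, a part supported on the translated subspaces $\xi+i a^j$, $\xi\in D_{k,j}$, with densities $C_j d_j(\xi,k)|c_k^j(\xi+\lambda)|^{-2}$.
\end{itemize}
By \eqref{eq:eig}, $\frac12\mathcal L_k F_\lambda=\frac12(\langle\lambda,\lambda\rangle-|\rho(k)|^2)F_\lambda$. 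Hence $\mathcal F_k$ conjugates $\frac12\mathcal L_k$ to multiplication by this function. On $i\mathbb R^r$ it equals $-\frac12(|\lambda|^2+|\rho(k)|^2)$; on $\xi+ia^j$ it equals $\frac12(\langle\xi+\lambda,\xi+\lambda\rangle-|\rho(k)|^2)$. I would also check that these values are bounded above by $0$ on the discrete pieces, which is needed for consistency with the sub-Markov property.

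\textbf{Step 3: identification of the kernel.} It follows that for $f,g\in \mathrm C_c^\infty(\mathbb R^r)^W$,
\[
\int f\,P_tg\,dm_k=\int e^{t\,\cdot\,\text{(eigenvalue)}}\,\mathcal F_kf(\lambda)\,\mathcal F_kg(-\lambda)\,d\nu_k(\lambda),
\]
where $F_\lambda(k;-\eta)$ enters through the standard symmetry of the transform. Fubini's theorem then identifies the kernel with the stated sum. Justifying Fubini requires:
\begin{itemize}
\item absolute convergence of the integrals, from the Gaussian factor together with the polynomial bound on $|c_k|^{-2}$ on $i\mathbb R^r$;
\item the bound $|F_\lambda(k;\tau)|\le |W|^{1/2}F_{\mathrm{Re}\lambda}(k;\tau)$;
\item finiteness of each set $D_{k,j}$.
\end{itemize}
Continuity of both sides in $(\tau,\eta)$ then upgrades the almost-everywhere equality to equality everywhere on $\bar{\mathcal C}$.

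\textbf{Step 4: the case $k_3\ge-2$.} I expect the vanishing claim to be a direct check. With $\boldsymbol\alpha=(k_2+k_3-1)/2$ and $\boldsymbol\beta=(k_2-1)/2$, the nonemptiness condition is $\boldsymbol\alpha-|\boldsymbol\beta|+(j-1)k_1+1<0$.
\begin{itemize}
\item If $k_2\ge1$, this becomes $k_3/2+1+(j-1)k_1<0$, which is impossible when $k_3\ge-2$.
\item If $0\le k_2<1$, it becomes $k_2+k_3/2+(j-1)k_1<0$. This is impossible because $k_3\ge 1-k_2$ forces $k_2+k_3/2\ge (1+k_2)/2>0$.
\end{itemize}
Hence every $D_{k,j}$ is empty and the sum term vanishes.

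\textbf{Main obstacle.} The hard step is Step~1 combined with matching the hypotheses of \cite{honda2024inversion}. That reference proves Plancherel and inversion for $\mathcal F_k$ on smooth compactly supported functions under its own conditions on $k$. One must verify that our range ($k_1\ge1$, $k_2\ge0$, $k_2+k_3\ge1$) lies inside those conditions. One must also check that the spectral measure of the self-adjoint operator generating our diffusion is the Plancherel measure given there, that is, that no other self-adjoint extension is involved. My first attempt would be to show that $\mathrm C_c^\infty(\mathbb R^r)^W$ is a core, using the non-attainability of the walls from Lemma~\ref{sol SDE} together with the entrance-boundary criterion.
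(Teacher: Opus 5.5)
Your spectral route differs from the paper's, and as written it has two genuine gaps.

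\textbf{The justification of Step 1 is false in the regime that matters.} Near a generic point of the wall $\tau_r=0$, $\frac12\mathcal L_k$ is modelled on $\partial_x^2+\frac{\kappa}{x}\partial_x$ on $L^2(x^\kappa dx)$ with $\kappa=k_2+k_3$. Its local solutions behave like $1$ and $x^{1-\kappa}$ (or $\log x$ when $\kappa=1$). The second lies in $L^2(x^\kappa dx)$ near $0$ if and only if $\kappa<3$.
\begin{itemize}
\item For $1\le k_2+k_3<3$ the wall is therefore limit-circle, not limit-point.
\item The same holds at $\tau_i=\tau_j$ when $1\le k_1<3$.
\item This is exactly the range of the applications: $k_1=a\in\{1,2\}$ for types III and I, and $k_2+k_3=1+2b$ or $1+2\alpha/\sqrt\gamma$.
\end{itemize}
Non-attainability of the walls (Feller's entrance classification) concerns the process and says nothing about deficiency indices. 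In this regime $\frac12\mathcal L_k$ on $\mathrm C_c^\infty(\mathcal C)$ has many self-adjoint extensions, so your proposed remedy in the ``main obstacle'' paragraph would not work.

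If essential self-adjointness on $\mathrm C_c^\infty(\mathbb R^r)^W$ holds, it is for a different reason. $W$-invariant test functions do not vanish on the walls, and against them the boundary form reduces, up to sign, to $f(0)\lim_{x\to0}x^\kappa u'(x)$. That limit is nonzero for the singular solution, so the form excludes it. Making this rigorous in rank $r\ge2$, where walls meet at corners, is real work that your plan does not contain.

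\textbf{The identification with the diffusion is asserted, not proved.} Even granting Step 1, you need to show that the closure generates the transition semigroup of the solution of \eqref{SDE}. That is the sense in which $p_t^k$ is used later, for instance for $\tau$ under $\mathbb P_\alpha$ in Section~4. Concretely, you must show that $f\mapsto\mathbb E_\tau[f(X(t))]$ is a strongly continuous contraction semigroup on $L^2(\mathcal C,dm_k)$ whose generator extends $\frac12\mathcal L_k$ on the core. $L^2(m_k)$-contractivity is not available off the shelf when $k_3<0$, since Lemma~\ref{sol SDE} only produces the process through a Girsanov change of measure.

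\textbf{How the paper avoids both issues.} It never picks a self-adjoint realisation.
\begin{itemize}
\item It takes the right-hand side $q_t^k$ as a candidate and checks that $\partial_t q=\frac12\mathcal L_k q$.
\item It uses Honda's inversion formula only to get $\int q_t^k(\tau,\eta)f(\eta)\,dm_k(\eta)\to f(\tau)$ as $t\to0$.
\item It identifies $\int q_T^k f\,dm_k=\mathbb E_\tau[f(X(T))]$ by applying It\^o's formula to $\varphi(s,X(s))$ along the SDE solution. Lemma~\ref{exponential integrability} and Doob's inequality make the stopped local martingales true martingales and justify passing to the limits.
\end{itemize}
This needs only the pointwise inversion formula and growth bounds, not a Plancherel isomorphism. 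It directly yields the transition density, which is the content you would otherwise have to supply in your Step~1 and Step~3.

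\textbf{Smaller points.} Your Step~4 verification that $D_{k,j}=\emptyset$ when $k_3\ge-2$ is correct, and more explicit than the paper's remark. The bound $|F_\lambda(k;\tau)|\le|W|^{1/2}F_{\mathrm{Re}\lambda}(k;\tau)$ you invoke for Fubini is established for nonnegative multiplicities. For $k_3<0$ you should use instead the estimate $|F_{\mu+i\lambda}(k;x)|\le C\,e^{\max_{w\in W}\langle\mu,w^{-1}x\rangle}$ of Narayanan--Pasquale, together with Honda's derivative bounds, as the paper does.
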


\begin{proof}

    For convenience, we write $C_0=C$, $c_{k}^{0}(\lambda)=c_k(\lambda)$ and notice that $D_{k,0}=\{0\}$ by definition so that we deal with a single generic term.

    Let $X(t),t\ge0$ denote the pathwise unique solution to the \cref{SDE} with initial condition $X(0)\in\bar{\mathcal{C}}$ which lives in $\bar{\mathcal{C}}$ and stays almost surely in $\mathcal{C}$ for all $t>0$.
    It suffices to prove that for any $f \in \mathrm{C}_{c}^\infty(\mathbb{R}^r)^W$, $\tau\in \mathcal{C}$ and $T\ge0$,
    \begin{equation}
        \label{eq:q_p}
        \int_{\bar{\mathcal{C}}}
        q_T^k(\tau,\eta)
        f(\eta)
        dm_k(\eta)
        =
        \mathbb{E}_\tau
        \left[ 
            f(X(T))
        \right]
        =
        \int_{\bar{\mathcal{C}}}
        p_T^k(\tau,\eta)
        f(\eta)
        dm_k(\eta)
    \end{equation}
    where $\mathbb{E}_\tau$ denotes the expectation when $X(0)=\tau\in\mathcal{C}$ and where both $p_t^k(\tau,\eta)$, $q_t^k(\tau,\eta)$ are $W$-invariant in both $\tau$ and $\eta$. The case $\tau\in\bar{\mathcal{C}}$ is finally obtained by continuity of the heat kernel.

    For the particular case, it suffices to notice that the condition $\boldsymbol{\alpha} - |\boldsymbol{\beta}| + (j-1)k_1 + 1<0$ is violated for no $1 \le j \le r$ when $k_3\ge -2$.
    
    \textbf{Step 1: Convergence and growth bounds.}
    As an immediate consequence of Lemma~2.7 in \cite{honda2024inversion}, for $\nu$ a multi-index and $K$ a compact subset of $\mathbb{R}^r$, there exists $C_K >0$ and $N\in\mathbb{N}$ such that $\forall x\in K$ and $\forall\mu,\lambda \in \mathbb{R}^r$,
    \[
        |\partial_{x^\nu} F_{\mu + i \lambda}\left(k;x\right)| \leq C_K (1 + |\mu + i \lambda|^{N})
        e^{
            \max_{w\in W}
                \langle \mu, w^{-1} x \rangle
        }
        .
    \]
    In particular, when $\mu$ is fixed we get an at-most polynomial growth of $\lambda\mapsto|\partial_{\xi^\tau} F_{\mu + i \lambda}\left(k;x\right)|$
    uniformly in $x$ which is assumed to belong to some compact set. 
    Moreover, by Lemma~2.4 in \cite{NarayananPasquale2022}, there exists a constant $\kappa$ such that $\forall\mu,\lambda \in \mathbb{R}^r$,
    \[
        |F_{\mu + i \lambda}\left(k;x\right)| 
        \leq
        \kappa 
        e^{
            \max_{w\in W}
                \langle \mu, w^{-1} x \rangle
        }
        ,
        \qquad
        \forall x\in \mathbb{R}^r
        .
    \]
    A direct application of Lemma~1.2 in \cite{honda2024inversion} provides the at-most polynomial growth of the $c$-functions, using Lemma~5.2 when $j\neq0$ to ensure that assumptions of Lemma~1.2 are satisfied.
    Let $\mu \in D_{k,j}$ for some $0 \le j \le r$, then there exists $N>0$ and $C_j>0$ such that for every $\lambda\in\mathbb{R}^r$,
    \[
        |c_k^j(\mu + i\lambda)|^{-2}
        \le
        C_j (1+ |\lambda|)^N
        .
    \]
    Since the $D_{k,j}$, $0\le j\le r$ are finite sets, all the quantities considered in the statement are well defined for $t>0$ and $\tau,\eta\in\mathbb{R}^r=\cup_{w\in W}w\cdot\bar{\mathcal{C}}$. Moreover the candidate heat-kernel is continuous on $(0,t]\times\bar{\mathcal{C}}\times\bar{\mathcal{C}}$ for any $t>0$.
    
    \textbf{Step 2: Backward Kolmogorov-Equation.}
    Let $q_t^k(\tau,\eta)$ denote the right hand side of the equality we intend to prove. From the previous step, we can take the derivative with respect to $t$ as well as apply the Heckman-Opdam Laplacian under the integral sign.
    Since the Heckman-Opdam hypergeometric function $F_\lambda \left(k;\bullet\right)$ is an eigenfunction for $\mathcal{L}_{k}$ associated with the eigenvalue $\langle \lambda, \lambda\rangle-|\rho(k)|^2$,  we can write
    \begin{align*}
        &\frac{1}{2}\mathcal{L}_{k} \int_{i a^j}
                e^{
                    \frac{t}{2}\left(\langle\xi+\lambda, \xi
                    +
                    \lambda\rangle
                    -
                    |\rho(k)|^{2}\right)
                } 
                F_{\xi+\lambda}( k ; \tau) 
                F_{\xi+\lambda}(k ;-\eta) 
                \frac{C_j d\lambda}{\left|c_{k}^{j}(\xi+\lambda)\right|^{2}}
        \\ &=
        \int_{i a^j}
        \frac{\langle\xi+\lambda, \xi
                    +
                    \lambda\rangle
                    -
                    |\rho(k)|^{2}}{2}
                e^{
                    \frac{t}{2}\left(\langle\xi+\lambda, \xi
                    +
                    \lambda\rangle
                    -
                    |\rho(k)|^{2}\right)
                } 
                F_{\xi+\lambda}( k ; \tau) 
                F_{\xi+\lambda}(k ;-\eta) 
                \frac{C_j d\lambda}{\left|c_{k}^{j}(\xi+\lambda)\right|^{2}}
        ,
    \end{align*}
    where the integrand is viewed as a function of either $\tau$ or $\eta$
    and
    \begin{align*}
        &\partial_t \int_{i a^j}
                e^{
                    \frac{t}{2}\left(\langle\xi+\lambda, \xi
                    +
                    \lambda\rangle
                    -
                    |\rho(k)|^{2}\right)
                } 
                F_{\xi+\lambda}( k ; \tau) 
                F_{\xi+\lambda}(k ;-\eta) 
                \frac{C_j d\lambda}{\left|c_{k}^{j}(\xi+\lambda)\right|^{2}}
        \\ &=
        \int_{i a^j}
        \frac{\langle\xi+\lambda, \xi
                    +
                    \lambda\rangle
                    -
                    |\rho(k)|^{2}}{2}
                e^{
                    \frac{t}{2}\left(\langle\xi+\lambda, \xi
                    +
                    \lambda\rangle
                    -
                    |\rho(k)|^{2}\right)
                } 
                F_{\xi+\lambda}( k ; \tau) 
                F_{\xi+\lambda}(k ;-\eta) 
                \frac{C_j d\lambda}{\left|c_{k}^{j}(\xi+\lambda)\right|^{2}}
        .
    \end{align*}
    Therefore, we have proved $\partial _t q_t^k = \frac{1}{2}\mathcal{L}_{k}q_t^k$. This is the Kolmogorov-Backward equation satisfied by $p_t^k(\tau,\eta)$ on $\bar{\mathcal{C}}\times\bar{\mathcal{C}}$.

    \textbf{Step 3: Initial condition.} As $\mathcal{R}_+$ contains a basis of $\mathbb{R}^r$, it follows that $-\Id \in W$. Hence we get from symmetry properties of $F_\lambda(k;x)$ (see for instance \cite[Equations 1.21 \& 1.22]{honda2024inversion}) that $F_\lambda(k;x) = F_\lambda(k;-x)$.
    By \textbf{Step 1}, and since $f \in \mathrm{C}_{c}^\infty(\mathbb{R}^r)^W$ is bounded, Fubini's Theorem allows us to write
    \[
        \int_{\bar{\mathcal{C}}}
        q_t^k(\tau,\eta)f(\eta)dm_k(\eta)
        =
        \sum_{\substack{
            0 \le j \le r\\
            \xi \in D_{k,j}}}
        d_{j}(\xi, k) 
            \int_{i a^j}
                e^{
                    \frac{t}{2}\left(\langle\xi+\lambda, \xi
                    +
                    \lambda\rangle
                    -
                    |\rho(k)|^{2}\right)
                } 
                \mathcal{F}_{k} f(\xi+\lambda)
                F_{\xi+\lambda}(k ;\tau) 
                \frac{C_j d\lambda}{\left|c_{k}^{j}(\xi+\lambda)\right|^{2}}
        .
    \]
    Let $t_0 >0$, then we have an upper-bound for $t\in(0,t_0]$ given by
    \[
        \left| 
        e^{
            \frac{t}{2}\left(\langle\xi+\lambda, \xi
            +
            \lambda\rangle
            -
            |\rho(k)|^{2}\right)
        } 
        \right|
        =
        e^{
            \frac{t}{2}
            \left(
                |\xi|^2
                -
                |\lambda|^2
                -
                |\rho(k)|^{2}
            \right)
        }
        \le
        e^{
            \frac{t_0}{2}
            \left(
                |\xi|^2
            \right)
        }
        .
    \]
    By Proposition~2.1 in \cite{honda2024inversion}, $i\mathbb{R}^r \ni \lambda \mapsto \mathcal{F}_{k} f(\xi+\lambda)$, where $\xi\in\mathbb{R}^r$ is fixed, decays faster than the inverse of any polynomial.
    Relying also on the polynomial growth bounds from \textbf{\emph{Step 1}}, we can apply the dominated convergence theorem to finally get from Theorem~5.3 in \cite{honda2024inversion} that for some choice of $C,C_1,\dots,C_j$, we have the convergence
    \[
        \lim_{t\to0}
        \int_{\bar{\mathcal{C}}}
        q_t^k(\tau,\eta)f(\eta)dm_k(\eta)
        =
        f(\tau)
    \]
    which is uniform in $\tau$ on compact sets.

    \textbf{Step 4: Probabilistic identification.} Fix $T>0$ and $f \in \mathrm{C}_{c}^\infty(\mathbb{R}^r)^W$. We define the function
    \[
        \varphi(s,\tau)
        =
        \int_{\bar{\mathcal{C}}}
        q_{T-s}^k(\tau,\eta)
        f(\eta)
        dm_k(\eta)
    \]
    which is $C^{1,2}((0,T)\times\mathcal{C})$ thanks to \textbf{Step 1}.
    From \textbf{Step 2}, $q_{T-s}^k(\tau,\eta)$ satisfies $\partial_s q_{T-s}^k + \frac{1}{2}\mathcal{L}_{k}q_{T-s}^k = 0$. It follows that $\partial_s \varphi(s,\tau) + \frac{1}{2}\mathcal{L}_{k}\varphi(s,\tau) = 0$ using again \textbf{Step 1} to differentiate under the integral sign.
    For a small $\epsilon>0$, applying Itô to $\varphi(s,X(s))$ we hence get the local martingale
    \[
        \varphi(s,X(s))
        -
        \varphi(\epsilon,X(\epsilon))
        =
        \sum_{i=1}^r
        \int_\epsilon^s
            \partial_{x_i}
            \varphi(u,X(u))
            d\beta_i(u),
            \qquad
            s\in[\epsilon,T-\epsilon]
            .
    \]
    For $n\ge1$ we introduce the stopping time $\sigma_n = \inf \{s\ge\epsilon\mid |X_s|\ge n\} \wedge (T-\epsilon)$. Writing $X^{(n)}(s)=X(s\wedge\sigma_n)$, it follows that $\varphi(s,X^{(n)}(s))-\varphi(\epsilon,X(\epsilon))$, $s\in[\epsilon,T-\epsilon]$ is a true martingale since 
    \[
        \mathbb{E}_\tau
        \left[ 
            \int_\epsilon^{T-\epsilon}
            |\nabla_x\varphi(u,X^{(n)}(u))|^2du
        \right]
        <
        \infty
        .
    \]

    Indeed, let $K_f$ denote the compact support of $f$, then we can write
    \[
    |\nabla_x \varphi(u, x)| \le \|f\|_\infty \cdot m_k(K_f) \cdot \sup_{\eta \in K_f} |\nabla_x q^k_{t-u}(x, \eta)|.
    \]
    From \textbf{Step 1} we have the bounds
    \begin{itemize}
        \item $|\partial_{x_i} F_{\mu+i\lambda}(k; x)| \le C_K (1 + |\mu+i\lambda|)\, e^{\max_w \langle \mu, w^{-1} x\rangle}$ for $x$ in the compact set $K=\{x\mid |x|\le n\}$;
        \item $|F_{\mu+i\lambda}(k; -\eta)| \le C_{K_f}\, e^{\max_w \langle \mu, -w^{-1} \eta\rangle}$ for $\eta \in K_f$;
        \item $|c_k^j(\mu+i\lambda)|^{-2} \le C_j (1+|\lambda|)^N$
        .
    \end{itemize}
    Note that $| e^{\frac{T-u}{2}(\langle \mu+i\lambda, \mu+i\lambda\rangle - |\rho(k)|^2)} | \le e^{\frac{T}{2}(|\xi|^2 + |\rho(k)|^2)}\, e^{-\frac{\varepsilon}{2}|\lambda|^2}$ for $u \in [\varepsilon, T-\varepsilon]$.
    The Gaussian factor $e^{-\varepsilon |\lambda|^2/2}$ dominates the polynomial growth in $|\lambda|$, so the integrals over $i\mathbb{R}^{r-j}$ in the definition of $q_{T-s}^k(\tau,\eta)$ converge. 
    This allows us to differentiate the integral representation of $q^k_{T-u}$ under the integral sign.
    Summing over the finite sets $D_{k,j}$, $0 \le j \le r$, there exist constants $C = C(\varepsilon, T, k, f) > 0$ and $\gamma = \gamma(k, f) > 0$ such that, relabelling $C_K$,
    \begin{equation*}
        \label{eq:nabla-phi-bound}
        |\nabla_x \varphi(u, x)| \le C\, e^{\gamma |x|} \qquad \forall\, u \in [\varepsilon, T-\varepsilon],\ x \in K
        .
    \end{equation*}
    As a consequence we have the upper-bound
    \[
        \mathbb{E}_\tau
        \left[ 
            \int_\epsilon^{T-\epsilon}
            |\nabla_x\varphi(u,X^{(n)}(u))|^2du
        \right]
        \le
        (T-2\epsilon)
        C^2
        \mathbb{E}_\tau
        \left[ 
            \sup_{\epsilon\le s \le T-\epsilon}
            e^{2\gamma X_1(s)}
        \right]
        <\infty
    \]
    where the last inequality follows from \cref{exponential integrability} together with Doob's maximal inequality. Therefore the stopped process $\varphi(t,X^{(n)}(t)),\epsilon\le t\le T-\epsilon$ is a true martingale and we can write
    \[
        \mathbb{E}_\tau
        \left[ 
            \varphi(\sigma_n,X^{(n)}(\sigma_n))
        \right]
        =
        \mathbb{E}_\tau
        \left[ 
            \varphi(\epsilon,X(\epsilon))
        \right]
    \]
    where we can take the limit as $n$ goes to $\infty$ in the left-hand side by using from \textbf{Step 1} the upper-bounds for $|F_{\mu+i\lambda}(k; -\eta)|$ valid for any $x\in\mathbb{R}^r$ together with Lemma \ref{exponential integrability}. Hence we finally get
    \[
        \mathbb{E}_\tau
        \left[
        \varphi\left(T-\epsilon,X_{T-\epsilon}\right)
        \right]
        =
        \mathbb{E}_\tau
        \left[ 
            \varphi(\epsilon,X_{\epsilon})
        \right]
        .
    \]

    Note that the paths of $X(t),t\ge0$ are continuous.
    By continuity of $\varphi$ on $(0,T)\times\mathcal{C}$ and using the convergence from \textbf{Step 3} which is uniform in $\tau$ on a random compact neighborhood of $X_T$, we obtain as $\epsilon\to0$
    \begin{align*}
        \varphi(\epsilon,X(\epsilon))
        &\longrightarrow
        \int_{\bar{\mathcal{C}}}
        q_T^k(\tau,\eta)
        f(\eta)
        dm_k(\eta),
        \\
        \varphi(T-\epsilon,X(T-\epsilon))
        &\longrightarrow
        f(X(T)).
    \end{align*}

    Using again from \textbf{Step 1} the upper-bounds for $|F_{\mu+i\lambda}(k; -\eta)|$ valid for any $x\in\mathbb{R}^r$ together with \cref{exponential integrability}, we can apply the dominated convergence theorem to obtain
    \[
        \int_{\bar{\mathcal{C}}}
        q_T^k(\tau,\eta)
        f(\eta)
        dm_k(\eta)
        =
        \mathbb{E}_\tau
        \left[ 
            f(X(T))
        \right]
        ,
    \]
    proving \cref{eq:q_p}.

\end{proof}


\section{Irreducible Cartan domains}
\label{sec:classical-domains-bergman}

This appendix records the explicit formulas for the irreducible bounded
symmetric domains used in the paper. For each Cartan domain we give the
bounded realization, the associated Hermitian positive Jordan triple system,
the generic norm, the genus, the Bergman operator, the canonical Hermitian
trace form, the K\"ahler potential, and the corresponding K\"ahler
\(d^c\)-potential form. In the tube-type cases we also record the
\(d^c\)-log-determinant form.

Throughout, \(V\) denotes the underlying Hermitian positive Jordan triple
system. The Bergman operator is
\begin{equation}
\label{eq:bergman-operator-def-appendix}
B(z,w)=\mathrm{Id}-D(z,w)+Q(z)Q(w),
\end{equation}
where
\[
D(z,w)u=\{z,w,u\},
\qquad
Q(z)u=\frac{1}{2}\{z,u,z\}.
\]
In all cases the \(G\)-invariant K\"ahler potential is
\begin{equation}
\label{eq:kahler-potential-appendix}
\Phi(z)=\log K(z,z)=-\gamma\log N(z,z),
\end{equation}
where \(N\) is the generic norm and \(\gamma\) is the genus. The K\"ahler
\(d^c\)-potential form is
\begin{equation}
\label{eq:dc-potential-appendix}
\alpha_z
=
d^c
\Phi(z)
=
\frac{i\gamma}{2}\,
\frac{(\partial-\bar\partial)N(z,z)}{N(z,z)}.
\end{equation}
When the Hermitian positive Jordan triple system is of tube type, equivalently
\(b=0\) in the simultaneous Peirce decomposition introduced in
\cref{Preliminaries on bounded symmetric domains}, we also give the
\(d^c\)-log-determinant form
\begin{equation}
\label{eq:dc-log-det-appendix}
\theta_z
=
2 d^c \log \mathcal J(z)
=
2 d^c \log |\mathrm{det}_{ \mathcal J}|(z)
=
i\,
\frac{(\bar\partial-\partial)|\mathrm{det}_{ \mathcal J}|(z)}
     {|\mathrm{det}_{ \mathcal J}|(z)},
\qquad z\in\Omega^\star .
\end{equation}

The corresponding Hermitian symmetric spaces, representations, and tube-type conditions
are summarized as follows:
\[
\renewcommand{\arraystretch}{1.25}
\begin{array}{c|c|c|c}
\text{Type} & G/K & \gamma & \text{Tube type} \\
\hline
\mathrm{I}_{m,n}
&
SU(m,n)/S(U(m)\times U(n))
&
m+n
&
m=n
\\
\mathrm{II}_{n}
&
SO^\ast(2n)/U(n)
&
2(n-1)
&
n \text{ even}
\\
\mathrm{III}_{n}
&
Sp(n,\mathbb R)/U(n)
&
n+1
&
\text{yes}
\\
\mathrm{IV}_{n}
&
SO_0(n,2)/(SO(2) \times SO(n))
&
n
&
\text{yes}
\\
\mathrm{V}
&
E_{6}^{-14}/(\mathrm{Spin}(10)U(1))
&
12
&
\text{no}
\\
\mathrm{VI}
&
E_{7}^{-25}/(E_6U(1))
&
18
&
\text{yes}
\end{array}
\]
We refer to \cite[4.14 \& 4.17]{Loos1977} and
\cite[VI.5 in Part V]{Book_complexdomains} for the classification of simple
Hermitian positive Jordan triple systems and their associated bounded symmetric
domains.
\subsection*{Type I: rectangular matrices \texorpdfstring{\((1\le m\le n)\)}{}}

The type-I domain is
\[
\Omega_{\mathrm I}(m,n)
=
\{Z\in M_{m,n}(\mathbb C): I_m-ZZ^\ast>0\}.
\]
The triple product is
\[
\{X,Y,Z\}
=
XY^\ast Z+ZY^\ast X.
\]
The generic norm and genus are
\[
N(Z,Z)=\mathrm{det}(I_m-ZZ^\ast),
\qquad
\gamma=m+n.
\]
The Bergman operator is
\[
B(Z,Z)W
=
(I_m-ZZ^\ast)\,W\,(I_n-Z^\ast Z).
\]
The canonical Hermitian trace form is
\[
(X|Y)=(m+n)\tr(XY^\ast).
\]
The K\"ahler potential is
\[
\Phi(Z)=-(m+n)\log\mathrm{det}(I_m-ZZ^\ast).
\]
The K\"ahler \(d^c\)-potential form is
\[
\alpha
=
\frac{m+n}{2i}
\tr\left(
(I_m-ZZ^\ast)^{-1}
(dZ\,Z^\ast-Z\,dZ^\ast)
\right).
\]

This Hermitian positive Jordan triple system is of tube type if and only if
\(m=n\). In the tube-type case,
\[
\mathcal J(z)=\sqrt{|\mathrm{det}(ZZ^\ast)|}
\]
and
\[
    \theta_Z
    =
    \im \tr\left(
        Z^{-1} dZ
    \right),
    \qquad
    Z\in\Omega^\star
    .
\]

\subsection*{Type II: complex skew-symmetric matrices \texorpdfstring{\((2\le n)\)}{}}

The type-II domain is
\[
\Omega_{\mathrm{II}}(n)
=
\{Z\in M_n(\mathbb C): Z^{\mathsf T}=-Z,\ I_n-ZZ^\ast>0\}.
\]
The triple product is the restriction of the type-I triple product:
\[
\{X,Y,Z\}
=
-\left(X\bar Y Z+Z\bar Y X\right).
\]
The Bergman operator is
\[
B(Z,Z)W
=
(I_n+Z\bar Z)\,W\,(I_n+\bar Z Z).
\]
The generic norm and genus are
\[
N(Z,Z)^2=\mathrm{det}(I_n+Z\bar Z),
\qquad
\gamma=2(n-1).
\]
The canonical Hermitian trace form is
\[
(X|Y)=(n-1)\tr(XY^\ast).
\]
The K\"ahler potential is
\[
\Phi(Z)=-(n-1)\log\mathrm{det}(I_n+Z\bar Z).
\]
The K\"ahler \(d^c\)-potential form is
\[
\alpha
=
\frac{n-1}{2i}
\tr\left(
(I_n+Z\bar Z)^{-1}
(Z\,d\bar Z-dZ\,\bar Z)
\right).
\]

This Hermitian positive Jordan triple system is of tube type if and only if
\(n\) is even. In that case,
\[
\mathcal J(z)
=
\left(|\mathrm{det}(Z\bar Z)|\right)^{1/4}
\]
and
\[
    \theta_Z
    =
    \frac{1}{2} \im \tr\left(
        Z^{-1} dZ
    \right),
    \qquad
    Z\in\Omega^\star
    .
\]

\subsection*{Type III: complex symmetric matrices \texorpdfstring{\((1\le n)\)}{}}

The type-III domain is
\[
\Omega_{\mathrm{III}}(n)
=
\{Z\in M_n(\mathbb C): Z^{\mathsf T}=Z,\ I_n-Z\bar Z>0\}.
\]
The triple product is the restriction of the type-I triple product:
\[
\{X,Y,Z\}
=
X\bar Y Z+Z\bar Y X.
\]
The generic norm and genus are
\[
N(Z,Z)=\mathrm{det}(I_n-Z\bar Z),
\qquad
\gamma=n+1.
\]
The Bergman operator is
\[
B(Z,Z)W
=
(I_n-Z\bar Z)\,W\,(I_n-\bar Z Z).
\]
The canonical Hermitian trace form is
\[
(X|Y)=(n+1)\tr(X\bar Y).
\]
The K\"ahler potential is
\[
\Phi(Z)=-(n+1)\log\mathrm{det}(I_n-Z\bar Z).
\]
The K\"ahler \(d^c\)-potential form is
\[
\alpha
=
\frac{n+1}{2i}
\tr\left(
(I_n-Z\bar Z)^{-1}
(dZ\,\bar Z-Z\,d\bar Z)
\right).
\]

This Hermitian positive Jordan triple system is of tube type. We have
\[
    \mathcal J(z)
    =
    \sqrt{|det(Z\bar Z)|}
\]
and
\[
    \theta_Z
    =
    \im \tr\left(
        Z^{-1} dZ
    \right),
    \qquad
    Z\in\Omega^\star
    .
\]

\subsection*{Type IV: the Lie ball \texorpdfstring{\((n\neq 2)\)}{}}

The type-IV domain is
\[
\Omega_{\mathrm{IV}}(n)
=
\left\{
z\in\mathbb C^n:
0<1-2\langle z,\bar z\rangle+|\langle z,z\rangle|^2
\text{ and }
\langle z,\bar z\rangle<1
\right\},
\]
where
\[
\langle x,y\rangle=\sum_{j=1}^n x_jy_j.
\]
The triple product is
\[
\{x,y,z\}
=
2\left(
\langle x,\bar y\rangle z
+
\langle z,\bar y\rangle x
-
\langle x,z\rangle \bar y
\right).
\]
The generic norm and genus are
\[
N(z,z)
=
1-2\langle z,\bar z\rangle+|\langle z,z\rangle|^2,
\qquad
\gamma=n.
\]
The Bergman operator is
\[
\begin{aligned}
B(z,z)w
&=
N(z,z)\,w
\\
&\quad
+
\left(
2\langle\bar z,w\rangle(2\langle z,\bar z\rangle-1)
-
2\langle\bar z,\bar z\rangle\langle z,w\rangle
\right)z
\\
&\quad
+
\left(
2\langle z,w\rangle
-
2\langle z,z\rangle\langle\bar z,w\rangle
\right)\bar z .
\end{aligned}
\]
The canonical Hermitian trace form is
\[
(x|y)=2n\langle x,\bar y\rangle.
\]
The K\"ahler potential is
\[
\Phi(z)
=
-n\log\left(
1-2\langle z,\bar z\rangle+|\langle z,z\rangle|^2
\right).
\]
The K\"ahler \(d^c\)-potential form is
\[
\alpha_z
=
\frac{in}{N(z,z)}
\left(
\langle \xi,dz\rangle
-
\langle \bar\xi,d\bar z\rangle
\right),
\qquad
\xi=\langle\bar z,\bar z\rangle z-\bar z.
\]

This Hermitian positive Jordan triple system is of tube type. We have
\[
\mathcal J(z)=|\langle z,z\rangle|
\]
and
\[
    \theta_z
    =
    2\;
    \im\left(
    \frac{\langle z,dz\rangle}{\langle z,z\rangle}
    \right)
    ,
    \qquad
    z\in\Omega^\star
    .
\]

\subsection*{Type VI: exceptional domain of dimension \texorpdfstring{\(27\)}{}}

Let
\[
V=\mathcal H_3(\mathbb O_{\mathbb C}),
\]
the space of \(3\times 3\) matrices with entries in the complexified octonions
\(\mathbb O_{\mathbb C}\), Hermitian with respect to Cayley conjugation. Let
\(t(X,Y)\) denote the standard Hermitian product, let \(det(X)\) denote the
determinant, and let \(X^\#\) denote the adjoint matrix of \(X\) in
\(\mathcal H_3(\mathbb O_{\mathbb C})\). The type-VI domain is
\[
\begin{aligned}
\Omega_{\mathrm{VI}}
=
\{Z\in\mathcal H_3(\mathbb O_{\mathbb C}):\;&
3-t(Z,Z)>0,
\\
&
3-2t(Z,Z)+t(Z^\#,Z^\#)>0,
\\
&
1-t(Z,Z)+t(Z^\#,Z^\#)-|det(Z)|^2>0
\}.
\end{aligned}
\]
The triple product is
\[
\{X,Y,Z\}
=
t(X,Y)Z+t(Z,Y)X-(X\times Z)\times\bar Y,
\]
where \(\times\) denotes the Freudenthal product defined by polarization of
the adjoint operator:
\[
X\times Y=(X+Y)^\#-X^\#-Y^\#,
\qquad
X\times X=2X^\#.
\]
The generic norm and genus are
\[
N(Z,Z)
=
1-t(Z,Z)+t(Z^\#,Z^\#)-|det(Z)|^2,
\qquad
\gamma=18.
\]
The Bergman operator is
\[
\begin{aligned}
B(Z,Z)W
&=
\left[1-t(Z,Z)\right]W
\\
&\quad
+
\left[
(t(Z,Z)-1)t(W,Z)
-
t(Z,Z^\#\times\bar W)
\right]Z
\\
&\quad
-
t(W,Z)\,(Z^\#\times\bar Z)
+
(Z\times W)\times\bar Z
+
Z^\#\times(\bar Z^\#\times W).
\end{aligned}
\]
The canonical Hermitian trace form is
\[
(X|Y)=18\,t(X,Y).
\]
The K\"ahler potential is
\[
\Phi(Z)
=
-18\log\left(
1-t(Z,Z)+t(Z^\#,Z^\#)-|det(Z)|^2
\right).
\]
The K\"ahler \(d^c\)-potential form is
\[
\alpha_Z
=
\frac{9i}{N(Z,Z)}
\left(
t(dZ,\Xi)
-
t(\Xi,dZ)
\right),
\]
where
\[
\Xi=\bar Z\times Z^\#-Z-\mathrm{det}(Z)\bar Z^\#.
\]

This Hermitian positive Jordan triple system is of tube type. We have
\[
\mathcal J(Z)=|det(Z)|
\]
and
\[
    \theta_Z
    =
    \im \frac{t(dZ,\bar Z^\#)}{det(Z)} 
    \qquad
    Z\in\Omega^\star
    .
\]

\subsection*{Type V: exceptional domain of dimension \texorpdfstring{\(16\)}{}}

Using the notation of the type-VI case, the type-V exceptional domain
corresponds to
\[
V=\mathcal M_{2,1}(\mathbb O_{\mathbb C}),
\]
which may be realized as the subspace of
\(\mathcal H_3(\mathbb O_{\mathbb C})\) consisting of matrices of the form
\[
\begin{pmatrix}
0 & a_3 & \tilde a_2 \\
\tilde a_3 & 0 & 0 \\
a_2 & 0 & 0
\end{pmatrix},
\]
where \(\tilde a\) denotes the Cayley conjugate of
\(a\in\mathbb O_{\mathbb C}\). The type-V domain is
\[
\begin{aligned}
\Omega_{\mathrm V}
=
\{Z\in\mathcal M_{2,1}(\mathbb O_{\mathbb C}):\;&
2-t(Z,Z)>0,
\\
&
1-t(Z,Z)+t(Z^\#,Z^\#)>0
\}.
\end{aligned}
\]
The triple product is
\[
\{X,Y,Z\}
=
t(X,Y)Z+t(Z,Y)X-(X\times Z)\times\bar Y.
\]
The generic norm and genus are
\[
N(Z,Z)
=
1-t(Z,Z)+t(Z^\#,Z^\#),
\qquad
\gamma=12.
\]
The Bergman operator is
\[
\begin{aligned}
B(Z,Z)W
&=
\left[1-t(Z,Z)\right]W
\\
&\quad
+
\left[
(t(Z,Z)-1)t(W,Z)
-
t(Z,Z^\#\times\bar W)
\right]Z
\\
&\quad
-
t(W,Z)\,(Z^\#\times\bar Z)
+
(Z\times W)\times\bar Z
+
Z^\#\times(\bar Z^\#\times W).
\end{aligned}
\]
The canonical Hermitian trace form is
\[
(X|Y)=12\,t(X,Y).
\]
The K\"ahler potential is
\[
\Phi(Z)
=
-12\log\left(
1-t(Z,Z)+t(Z^\#,Z^\#)
\right).
\]
The K\"ahler \(d^c\)-potential form is
\[
\alpha_Z
=
\frac{6i}{N(Z,Z)}
\left(
t(dZ,\Xi)
-
t(\Xi,dZ)
\right),
\]
where
\[
\Xi=\bar Z\times Z^\#-Z.
\]
This Hermitian positive Jordan triple system is not of tube type.

\section{Auxiliary computations}
In this section we collect some computations used throughout the text to deal with the interacting term in the first-order part of $\mathcal{L}_{k_1,k_2,k_3}$.
\begin{lemma}\label{computation coth}
Let $a_1,\dots,a_r \in \mathbb{R}_{\ge 0}$ such that 
$a_i \neq a_j$ for $i \neq j$. Then
\[
\sum_{1 \le i \ne j \le r} 
\tanh a_i 
\bigl( \coth(a_i - a_j) + \coth(a_i + a_j) \bigr)
= r(r-1).
\]
\end{lemma}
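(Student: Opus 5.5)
The plan is to prove the identity pair by pair. Each unordered pair $\{i,j\}$ with $i\neq j$ enters the sum twice, once as $(i,j)$ and once as $(j,i)$. Write $x=a_i$, $y=a_j$. Since $\coth$ is odd, $\coth(y-x)=-\coth(x-y)$, so the contribution of the pair is
\[
T(x,y)=\bigl(\tanh x-\tanh y\bigr)\coth(x-y)+\bigl(\tanh x+\tanh y\bigr)\coth(x+y).
\]
The claim follows once we show $T(x,y)=2$ for all $x\neq y$ with $x,y\ge 0$, because there are $r(r-1)/2$ unordered pairs and $2\cdot r(r-1)/2=r(r-1)$.

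To compute $T$, use the addition formulas
\[
\tanh x\mp\tanh y=\frac{\sinh(x\mp y)}{\cosh x\cosh y},
\]
which can be checked by putting the two fractions over the common denominator $\cosh x\cosh y$. Multiplying each by $\coth(x\mp y)=\cosh(x\mp y)/\sinh(x\mp y)$ cancels the $\sinh(x\mp y)$ factor. This gives
\[
T(x,y)=\frac{\cosh(x-y)+\cosh(x+y)}{\cosh x\cosh y}=\frac{2\cosh x\cosh y}{\cosh x\cosh y}=2.
\]

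The only delicate point is well-definedness. The term $\coth(x-y)$ is finite because $a_i\neq a_j$. The term $\coth(x+y)$ requires $x+y>0$, which holds because the $a_i$ are distinct and nonnegative, so at most one of them is $0$ and never both $x$ and $y$. There is a removable issue when $\sinh(x+y)$ is small, but it does not arise here since $x+y>0$. With that checked, the cancellation above is valid and no real obstacle remains; the pairing step is the key idea.
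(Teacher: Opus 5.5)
Your proof is correct and uses the same strategy as the paper: pair the ordered terms $(i,j)$ and $(j,i)$, and show that each unordered pair contributes $2$. The paper first reduces each ordered summand to $2\sinh^2 a_i/(\sinh^2 a_i-\sinh^2 a_j)$ and then symmetrizes, whereas you symmetrize first and regroup by $\coth(x\mp y)$, so that $\tanh x\mp\tanh y=\sinh(x\mp y)/(\cosh x\cosh y)$ gives the value $2$ slightly more directly.
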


\begin{proof}
Fix $i \ne j$. Using
\[
\tanh x = \frac{\sinh x}{\cosh x}, 
\qquad 
\coth x = \frac{\cosh x}{\sinh x},
\]
a direct computation with the usual hyperbolic identities gives
\[
\tanh a_i \coth(a_i - a_j)
= 1 + \frac{\sinh a_j}{\cosh a_i \sinh(a_i - a_j)},
\]
and
\[
\tanh a_i \coth(a_i + a_j)
= 1 - \frac{\sinh a_j}{\cosh a_i \sinh(a_i + a_j)}.
\]
Adding these identities yields
\[
\tanh a_i \bigl( \coth(a_i - a_j) + \coth(a_i + a_j) \bigr)
=
2
+
\frac{\sinh a_j}{\cosh a_i}
\left(
\frac{1}{\sinh(a_i - a_j)}
-
\frac{1}{\sinh(a_i + a_j)}
\right).
\]
Using
\[
\sinh(a_i+a_j) - \sinh(a_i-a_j)
= 2 \cosh a_i \sinh a_j
\]
and
\[
\sinh(a_i-a_j)\sinh(a_i+a_j)
=
\sinh^2 a_i - \sinh^2 a_j,
\]
we obtain
\[
\tanh a_i 
\bigl( \coth(a_i - a_j) + \coth(a_i + a_j) \bigr)
=
\frac{2 \sinh^2 a_i}{\sinh^2 a_i - \sinh^2 a_j}.
\]

Therefore,
\[
\sum_{i \ne j}
\tanh a_i 
\bigl( \coth(a_i - a_j) + \coth(a_i + a_j) \bigr)
=
\sum_{i \ne j}
\frac{2 \sinh^2 a_i}{\sinh^2 a_i - \sinh^2 a_j}.
\]
For each unordered pair $\{i,j\}$,
\[
\frac{2 \sinh^2 a_i}{\sinh^2 a_i - \sinh^2 a_j}
+
\frac{2 \sinh^2 a_j}{\sinh^2 a_j - \sinh^2 a_i}
=
2.
\]
Since there are $\binom{r}{2}$ such pairs, the sum equals
\[
2 \binom{r}{2} = r(r-1).
\]
\end{proof}

\begin{lemma}\label{computation coth bis}
Let $a_1,\dots,a_r \in \mathbb{R}_{\ge 0}$ such that 
$a_i \neq a_j$ for $i \neq j$. Then
\[
    \sum_{i \ne j} \coth(2a_i)\bigl(\coth(a_i-a_j)+\coth(a_i+a_j)\bigr)
    = r(r-1).
\]
\end{lemma}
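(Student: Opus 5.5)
The plan is to mimic the proof of Lemma~\ref{computation coth}: compute each term pointwise in closed form, then symmetrize over unordered pairs $\{i,j\}$. Fix $i\neq j$ and write $x=a_i$, $y=a_j$. I will first compute
\[
\coth(x-y)+\coth(x+y)=\frac{\sinh(2x)}{\sinh(x-y)\sinh(x+y)}.
\]
This follows from $\sinh(A+B)=\sinh A\cosh B+\cosh A\sinh B$ applied to the numerator $\cosh(x-y)\sinh(x+y)+\cosh(x+y)\sinh(x-y)=\sinh(2x)$.

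Next, I will use $\sinh(x-y)\sinh(x+y)=\sinh^2x-\sinh^2y$, which is already used in the proof of Lemma~\ref{computation coth}. Multiplying by $\coth(2x)$ then gives
\[
\coth(2a_i)\bigl(\coth(a_i-a_j)+\coth(a_i+a_j)\bigr)=\frac{\cosh(2a_i)}{\sinh^2a_i-\sinh^2a_j}.
\]
Here I implicitly need $a_i>0$ when dividing by $\sinh(2a_i)$. If some $a_i=0$, then $\coth(2a_i)$ is infinite while $\coth(a_i-a_j)+\coth(a_i+a_j)=0$, so that term has to be read through the right-hand side above. I will therefore state the identity for $a_i>0$, which is the only case used since $\tau(t)\in\mathcal C$, and note that it extends by continuity.

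Finally, I will group the ordered pairs $(i,j)$ and $(j,i)$. Since $\cosh(2u)=1+2\sinh^2u$,
\[
\frac{\cosh(2a_i)-\cosh(2a_j)}{\sinh^2a_i-\sinh^2a_j}=\frac{2(\sinh^2a_i-\sinh^2a_j)}{\sinh^2a_i-\sinh^2a_j}=2.
\]
Summing over the $\binom r2$ unordered pairs gives $2\binom r2=r(r-1)$, as claimed.

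The only delicate step is the first closed-form simplification, i.e. verifying the sum formula for $\coth(x-y)+\coth(x+y)$; everything after it is routine. As a sanity check, this lemma is what produces the drift term $r(2+a(r-1))$ in $\sum_i d\log\sinh(2\tau_i)$. Indeed, It\^o's formula gives $2\coth(2\tau_i)\,d\tau_i-2\sinh^{-2}(2\tau_i)\,dt$. The one-particle drift contributes $2\coth^2(2\tau_i)-2\sinh^{-2}(2\tau_i)=2$, and the interaction part contributes $a\,r(r-1)$.
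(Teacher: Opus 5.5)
Your proof is correct and is essentially the paper's argument: you use the same identity $\coth u+\coth v=\sinh(u+v)/(\sinh u\sinh v)$, reduce each summand to $\cosh(2a_i)/(\sinh(a_i-a_j)\sinh(a_i+a_j))$, and pair $(i,j)$ with $(j,i)$; your use of $\cosh 2u=1+2\sinh^2u$ is equivalent to the paper's product formula for $\cosh(2a_i)-\cosh(2a_j)$. Your remark that the summand is formally $\infty\cdot 0$ when some $a_i=0$ is a fair point that the paper passes over silently, and reading the identity for $a_i>0$ (the only case used, since $\tau(t)\in\mathcal C$), extended by continuity, is the right interpretation.
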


\begin{proof}
Let $S$ denote the left hand side of the equation.
Using the identity
\[
\coth u + \coth v = \frac{\sinh(u+v)}{\sinh u \, \sinh v},
\]
with \(u = a_i - a_j\), \(v = a_i + a_j\), we get
\[
\coth(a_i-a_j) + \coth(a_i+a_j)
= \frac{\sinh(2a_i)}{\sinh(a_i-a_j)\sinh(a_i+a_j)}.
\]

Therefore,
\[
S = \sum_{i \ne j} 
\coth(2a_i)\frac{\sinh(2a_i)}{\sinh(a_i-a_j)\sinh(a_i+a_j)}
= \sum_{i \ne j} 
\frac{\cosh(2a_i)}{\sinh(a_i-a_j)\sinh(a_i+a_j)}.
\]

Now group terms for pairs \((i,j)\) and \((j,i)\):
\[
\frac{\cosh(2a_i)}{\sinh(a_i-a_j)\sinh(a_i+a_j)}
+
\frac{\cosh(2a_j)}{\sinh(a_j-a_i)\sinh(a_j+a_i)}.
\]

Since
\[
\sinh(a_j-a_i) = -\sinh(a_i-a_j), \quad 
\sinh(a_j+a_i) = \sinh(a_i+a_j),
\]
this becomes
\[
\frac{\cosh(2a_i)-\cosh(2a_j)}{\sinh(a_i-a_j)\sinh(a_i+a_j)}.
\]

Using
\[
\cosh(2a_i) - \cosh(2a_j)
= 2\sinh(a_i+a_j)\sinh(a_i-a_j),
\]
each pair contributes \(2\). Hence
\[
S = 2 \binom{r}{2} = r(r-1).
\]
\end{proof}

\bibliographystyle{plain}
\bibliography{biblio}

\vspace{5pt}
\noindent
\begin{minipage}{\textwidth}
    \small
    \textbf{Fabrice Baudoin:} \\
    Department of Mathematics, Aarhus University \\
    Email: fbaudoin@math.au.dk
\end{minipage}

\vspace{10pt} 
\noindent
\begin{minipage}{\textwidth}
    \small
    \textbf{Alexandre Reber:} \\
    Department of Mathematics, Aarhus University \\
    Email: alexandre.reber@math.au.dk
\end{minipage}

\end{document}